\documentclass{jgcc}

\pdfoutput=1


\usepackage{lastpage}

\jgccdoi{14}{2}{2}{10019}


\jgccheading{}{\pageref{LastPage}}{}{}{Sept.~8,~2022}{Feb.~1,~2023}{}  


\keywords{Combinatorics, group theory, geodesic growth}

\usepackage{amsfonts,amsmath,amssymb,amsthm}
\usepackage{mathtools}
\usepackage{derivative}
\allowdisplaybreaks

\usepackage{enumitem}
\usepackage{tikz}
\usepackage{tikz-cd}
\usetikzlibrary{automata, positioning, arrows, calc}

\newcommand{\N}{\mathbb{N}}
\newcommand{\Z}{\mathbb{Z}}
\DeclarePairedDelimiter{\abs}{|}{|}

\DeclareMathOperator{\Lk}{Lk}
\DeclareMathOperator{\supp}{supp}

\DeclareMathOperator{\NGP}{NGP}
\DeclareMathOperator{\RACG}{RACG}

\usepackage[margin=1.25in]{geometry}
\usepackage{mathrsfs}
\renewcommand{\mathcal}{\mathscr}
\usepackage[justification=centering]{caption}

\usepackage{hyperref}

\title{Geodesic Growth of Numbered Graph Products}

\begin{document}
\author[L. Marjanski]{Lindsay Marjanski}
\address{College of the Holy Cross, Worcester, MA 01610, USA}

\author[V. Solon]{Vincent Solon}
\address{The University of Texas at Austin, Austin, TX 78712, USA}

\author[F. Zheng]{Frank Zheng}
\address{University of California, Los Angeles, Los Angeles, CA 90095, USA}

\author[K. Zopff]{Kathleen Zopff}
\address{Bellarmine University, Louisville, KY 40205, USA}
 
\begin{abstract}
    In this paper, we study geodesic growth of numbered graph products; these are a generalization of right-angled Coxeter groups, defined as graph products of finite cyclic groups. We first define a graph-theoretic condition called link-regularity, as well as a natural equivalence amongst link-regular numbered graphs, and show that numbered graph products associated to equivalent link-regular numbered graphs must have the same geodesic growth series. Next, we derive a formula for the geodesic growth of right-angled Coxeter groups associated to equivalent link-regular graphs. Finally, we find a system of equations that can be used to solve for the geodesic growth of numbered graph products corresponding to link-regular numbered graphs that contain no triangles and have constant vertex numbering.
\end{abstract}

\maketitle

\tableofcontents

\section{Introduction}
Given a group $G$ and a generating set $S$, one can define the \textit{standard growth series} of $(G,S)$ to be the generating function whose $n$th coefficient counts the elements of $G$ with word length $n$. The standard growth series of $G$ reflects a number of important properties of $G$ itself; a celebrated result in this area is Gromov's polynomial growth theorem which states that groups with growth series of polynomial order are virtually nilpotent \cite{Gro81}. Similarly, one can study the \textit{geodesic growth series} of $(G,S)$, which is the generating function whose $n$th coefficient counts the number of length $n$ \textit{geodesics}, or geodesic paths of length $n$ in the Cayley graph of $(G, S)$ starting at the identity. 

A \textit{numbered graph product} (NGP) is a certain group defined from a \textit{numbered graph}. A numbered graph is a pair $(\Gamma, N)$ where $\Gamma$ is a simplicial graph and $N: V\Gamma \rightarrow \N$ is a map that assigns a natural number greater than 1 to each vertex of $\Gamma$. The NGP associated to $(\Gamma, N)$, denoted by $\NGP(\Gamma, N)$ or just $\NGP(\Gamma)$, is the group with presentation
\[
    \left\langle v \in V\Gamma \bigm| \text{$v^{N(v)} = 1$ for all $v \in V\Gamma$, $uv = vu$ for all $\{u, v\} \in E\Gamma$}\right\rangle.
\]
We call the set $S = V\Gamma \cup (V\Gamma)^{-1}$ the \textit{standard generating set} of $\NGP(\Gamma)$. A \textit{right-angled Coxeter group} (RACG) is an NGP where $N(v) = 2$ for all $v \in V\Gamma$.

Throughout this paper, we will be interested in the geodesic growth series of NGPs associated to numbered graphs satisfying a strong combinatorial property known as \textit{link-regularity}. Our first result builds on the work of \cite{Ant12}. There, the authors show that the geodesic growth of a RACG associated to a graph $\Gamma$ is fully determined by combinatorial properties of $\Gamma$ related to link-regularity. This result was then used to find distinct RACGs with the same geodesic growth.

In Section \ref{sec: theorem 1}, we extend this result to NGPs, when the associated numbered graph satisfies a generalized definition of link-regularity:

\setcounter{section}{5}
\setcounter{thm}{2}
\begin{defi}
    Let $(\Gamma, N)$ be a numbered graph. If $U \subseteq V\Gamma$, we let $N(U)$ denote the multiset $\{N(v) \mid v \in U\}$. We say $\Gamma$ is \textit{link-regular} if, whenever $N(\sigma_1) = N(\sigma_2)$ for cliques $\sigma_1,\sigma_2$, we have $N(\Lk(\sigma_1)) = N(\Lk(\sigma_2))$. (Here, $\Lk(\sigma)$ denotes the \textit{link} of the clique $\sigma$, defined as $\Lk(\sigma) = \{ v \in V\Gamma \setminus \sigma \mid \{v\} \cup \sigma \text{ is a clique}\}$, cf. Definition \ref{def: cliques and links}.)
\end{defi}

This definition is equivalent to the definition of link-regularity in \cite{Ant12} when the NGP in question is a RACG. We use this condition to define a certain equivalence (see Definition \ref{def: graph equivalence}) on link-regular numbered graphs. The main result of Section \ref{sec: theorem 1} is the following:

\setcounter{thm}{13}
\begin{thm}
    If $(\Gamma, N)$ and $(\Gamma', N')$ are equivalent link-regular numbered graphs, then $\NGP(\Gamma)$ and $\NGP(\Gamma')$ have the same geodesic growth series.
\end{thm}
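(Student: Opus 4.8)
The plan is to show that, for an arbitrary numbered graph, the set of geodesic words in $\NGP(\Gamma)$ is recognized by a finite weighted automaton whose entire structure — states, transitions and transition weights — is determined by a finite bundle of ``link data'' attached to $\Gamma$, and then to observe that equivalent link-regular numbered graphs carry identical link data, so the two automata (hence the two geodesic growth series) coincide.

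The first step is to invoke the combinatorial characterization of geodesic words in NGPs established earlier: a word $w$ over $S$ is geodesic in $\NGP(\Gamma)$ precisely when every maximal syllable $v^{a}$ of $w$ spells a geodesic in the cyclic vertex subgroup $\langle v\rangle\cong\Z/N(v)\Z$ and $w$ cannot be shortened by any sequence of shuffles of adjacent commuting syllables that merges two syllables carried by a common vertex (with the delicate point that, when $N(v)$ is even, merging two $v$-syllables of total length $N(v)/2$ preserves the length and so is permitted). Granting this, one reads a candidate geodesic $w=s_{1}\cdots s_{n}$ from left to right and, after each prefix $p$, records exactly what is needed to decide which letters $v^{\pm1}$ may legally be appended: the clique $\sigma$ of \emph{pending} syllables of $p$ — those that can still be shuffled to the end of $p$ because the suffix following them lies entirely in the star of $\sigma$ — together with the residues of those syllables in their cyclic groups and, recursively, the numbered subgraph induced on $\Lk(\sigma)$, which governs what the admissible future looks like. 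Call this bundle the \emph{state} $Q(p)$. Appending a letter updates $Q(p)$ deterministically; a letter is appendable exactly when the update creates neither a length-reducing merge nor an overlong syllable; and the number of length-$n$ geodesics equals the number of length-$n$ accepted paths, so the geodesic growth series is read off this automaton.

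The role of link-regularity is to collapse the state to purely numerical data. If $\Gamma$ is link-regular, then for a clique $\sigma$ the numbered subgraph induced on $\Lk(\sigma)$ is, as far as the counting above is concerned, determined by the multiset $N(\sigma)$: link-regularity gives $N(\Lk(\sigma_1))=N(\Lk(\sigma_2))$ whenever $N(\sigma_1)=N(\sigma_2)$, and since every clique occurring inside $\Lk(\sigma)$ is again a clique of $\Gamma$ whose multiset is determined recursively, the whole tree of link data below $\sigma$ depends only on $N(\sigma)$. Hence $Q(p)$ may be replaced by a function from the finite set $\{N(\sigma):\sigma\text{ a clique of }\Gamma\}$ of occurring clique-multisets to link-multisets, decorated with bounded residue information, and the entire automaton is determined by this set of occurring clique-multisets together with the assignment $N(\sigma)\mapsto N(\Lk(\sigma))$ — which is exactly the invariant that Definition~\ref{def: graph equivalence} forces two equivalent link-regular numbered graphs to share. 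Therefore, if $(\Gamma,N)$ and $(\Gamma',N')$ are equivalent link-regular numbered graphs, the two automata are isomorphic as weighted automata, so $\NGP(\Gamma)$ and $\NGP(\Gamma')$ have the same geodesic growth series; equivalently, this isomorphism yields an explicit length-preserving bijection between the two geodesic languages.

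The main obstacle is the middle step: isolating the state $Q(p)$ and proving it is exactly right — rich enough to decide every legal extension, yet recording nothing beyond (i) link data of cliques that actually occur in $\Gamma$, so that link-regularity can be applied, and (ii) a bounded amount of residue bookkeeping, so that the automaton is finite. The subtleties are the syllable-versus-letter accounting and, once more, the even-order case: ``$v$ is still pending'' cannot be a mere on/off flag, since whether appending $v^{\pm1}$ triggers a forbidden merge depends on the residue of the pending $v$-syllable modulo $N(v)$. Once $Q(p)$ is correctly pinned down, the passage to numerical data and the conclusion are bookkeeping.
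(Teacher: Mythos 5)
Your overall architecture matches the paper's: both pass to the finite automaton whose states are powered cliques (your ``clique of pending syllables with residues''), and both argue that, under link-regularity, the transition structure can be collapsed to data indexed by numerical profiles, which equivalent graphs share. The gap is in the justification of the collapse. From a state $\Sigma$ supported on a clique $\sigma$, reading a letter $v^{\pm1}$ sends $\Sigma$ to a state supported on $(\supp\Sigma\cap\Lk(v))\cup\{v\}$; so the transitions are governed not only by the vertices of $\Lk(\sigma)$ but by \emph{all} vertices of $\Gamma$, partitioned according to which subclique $\tau\subseteq\sigma$ they are adjacent to, i.e.\ by the sets $\Lk(\sigma;\tau)=\{v\notin\sigma \mid \Lk(v)\cap\sigma=\tau\}$. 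To make the lumped transition counts well defined you must show that the multiset $N(\Lk(\sigma;\tau))$ depends only on $N(\sigma)$ and $N(\tau)$. Your argument --- ``every clique occurring inside $\Lk(\sigma)$ is again a clique of $\Gamma$, so the whole tree of link data below $\sigma$ is determined recursively'' --- only sees vertices adjacent to all of $\sigma$ (the case $\tau=\sigma$), which is the one case link-regularity handles directly. It says nothing about the vertices adjacent to a proper subclique of $\sigma$, which are exactly the ones responsible for the transitions that shrink the current clique, and these vertices are not visible inside the induced subgraph on $\Lk(\sigma)$.

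This is not a cosmetic omission: it is the technical heart of the proof. The paper closes it with Lemma \ref{lemma: strengthening link regularity} and Corollary \ref{corollary: strengthening link regularity}, a downward induction on $|\tau|$ using the inclusion--exclusion identity $\Lk(\sigma;\tau)=\Lk(\tau)\setminus\bigl((\sigma\setminus\tau)\sqcup\bigsqcup_{\tau\subsetneq\pi\subseteq\sigma}\Lk(\sigma;\pi)\bigr)$ to express $N(\Lk(\sigma;\tau))$ in terms of quantities that link-regularity does control. A secondary caution: your conclusion that the two automata are ``isomorphic as weighted automata'' is stronger than what the lumping gives you (the state-by-state bijections of Lemma \ref{lemma: bijection between transitions} are not shown to be globally consistent, so no length-preserving bijection of geodesic languages is actually produced); the safe route, and the one the paper takes, is to show only that the counts $|B_P(m)|$ of length-$m$ words ending at a $P$-state satisfy the same recurrence $|B_P(m)|=\sum_Q\beta_{Q\to P}|B_Q(m-1)|$ in both automata, which already forces equality of the growth series.
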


The equivalence of link-regular numbered graphs is weaker than the isomorphism, allowing us to find infinitely many distinct graphs with identical geodesic growth series. We prove Theorem \ref{theorem: main} by sequentially generalizing the arguments from \cite{Ant12} to allow for the use of multisets instead of a single number.

Our next result builds on Section 5 of \cite{Ant12}, where the authors find an explicit formula for the geodesic growth series of a RACG associated to a link-regular, triangle-free graph $\Gamma$. The paper \cite{Ant13} improves upon this result by allowing $\Gamma$ to be tetrahedron-free. In Section \ref{sec: theorem 2}, we further strengthen this result by obtaining the geodesic growth series of a RACG based on any link-regular graph $\Gamma$. In particular, we have the following theorem:

\setcounter{section}{6}
\setcounter{thm}{3}
\begin{thm}
    Let $\Gamma$ be a link-regular simplicial graph with maximum clique size $d$. Let $\ell_0 = |V(\Gamma)|$, and for $1 \leq k \leq d$, let $\ell_k = \abs{\Lk(\sigma)}$, where $\sigma$ is any $k$-clique (this is well-defined by link-regularity of $\Gamma$). For integers $0 \leq m \leq d$ and $0 \leq k \leq m$, set
    \[
        N_{m,k} =
        \begin{dcases}
            \left(\prod_{k < j < m} \ell_j\right)\sum_{j = k}^m \binom{m - k}{j - k} (-1)^{j - k} \ell_j
                        & \text{if $k < m - 1$} \\
            \ell_{m - 1} - \ell_m - 1 & \text{if $k = m - 1$} \\
            1 & \text{if $k = m$},
        \end{dcases}
    \]
    and for integers $0 \leq i \leq d$ and $0 \leq j \leq i$, set
    \[
        \begin{dcases}
            M_{i, j} = \sum_{\substack{j \leq s_1 < t_1 \leq \dots \leq s_n < t_n \leq d \\
                (t_1 - s_1) + \dots + (t_n - s_n) = i - j}}
        (-1)^n \binom{t_1}{s_1} \cdots \binom{t_n}{s_n} N_{t_1, s_1} \cdots N_{t_n, s_n}
        & \text{if $i \neq j$} \\
            M_{i, j} = 1 & \text{if $i = j$.} \\
        \end{dcases}
    \]
    Then the geodesic growth of $\RACG(\Gamma)$ is given by the rational function
    \[
        \mathcal{G}(z) =
        \frac{\sum_{i = 0}^d \left(\sum_{j = 0}^i \ell_0 \cdots \ell_{j - 1} M_{i, j}\right)z^i}
        {\sum_{i = 0}^d M_{i, 0} z^i}.
    \]
\end{thm}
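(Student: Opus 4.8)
The plan is to present the geodesic words of $\RACG(\Gamma)$ as the language of a finite automaton built solely from the numbers $\ell_0,\dots,\ell_d$, to solve the resulting linear system of generating functions, and then to identify the rational solution with the stated closed form through inclusion--exclusion on the clique poset of $\Gamma$.

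For a geodesic word $w$ over $V\Gamma$, let $R(w)$ be the set of generators $s$ that can be shuffled to the right end of $w$ (equivalently, $\abs{ws}<\abs{w}$). As in \cite{Ant12}, $R(w)$ is always a clique of $\Gamma$, a word $ws$ with $s\notin R(w)$ is again geodesic, and in that case $R(ws)=\{s\}\cup\bigl(R(w)\cap\Lk(s)\bigr)$ is a disjoint union, so $\abs{R(ws)}=1+\abs{R(w)\cap\Lk(s)}$. Since $\Gamma$ is link-regular and the generators of a RACG have order $2$, the only invariant of a clique $\sigma$ that matters here is $\abs{\sigma}$: indeed $\abs{\Lk(\sigma)}=\ell_{\abs\sigma}$, and more generally, for $\abs\sigma=k$ the number of letters $s\notin\sigma$ adjacent to exactly $m-1$ vertices of $\sigma$ depends only on $k$ and $m$, and is computed by inclusion--exclusion over the $j$-element subsets $\tau\subseteq\sigma$, each contributing through $\abs{\Lk(\tau)\setminus\sigma}=\ell_j-(k-j)$. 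Hence geodesic words are recognized by an automaton with states $0,1,\dots,d$ (the state of $w$ being $\abs{R(w)}$), whose transfer matrix $C$ is an explicit function of $\ell_0,\dots,\ell_d$: its only nonzero entries are $C_{k,k+1}=\ell_k$ (a \emph{rise}), $C_{k,k}=k(\ell_{k-1}-\ell_k-1)$ (a \emph{hold}), and, for $m\le k-1$, $C_{k,m}=\binom{k}{m-1}\sum_{j=m-1}^{k}(-1)^{\,j-m+1}\binom{k-m+1}{j-m+1}\ell_j$ (a \emph{drop}). The identity $\sum_i(-1)^{i-j}\binom{n}{i-j}(c-i)=0$ for $n\ge 2$ is precisely why \emph{hold} and \emph{drop-by-one} split off from the generic \emph{drop} as separate cases --- this is the source of the three cases in the definition of $N_{m,k}$.

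Let $G_m(z)$ generate the geodesic words $w$ with $\abs{R(w)}=m$; then $G_0(z)=1$, $G_m(z)=z\sum_k C_{k,m}G_k(z)$ for $1\le m\le d$, and $\mathcal{G}(z)=\sum_{m=0}^d G_m(z)$. This is a finite linear system over $\mathbb{Q}(z)$, so $\mathcal{G}(z)$ is rational; its denominator is $\det(I-zC^{\mathsf T})$ and its numerator the matching cofactor combination. To reach the stated normalization, rescale each state by the number $\ell_0\cdots\ell_{m-1}$ of geodesic words reaching state $m$ by a straight \emph{rise} (equivalently, spelling an ordered $m$-clique): writing $H_m(z)=G_m(z)/(\ell_0\cdots\ell_{m-1})$, the products of $\ell_j$'s cancel out of the transition coefficients, leaving the recursion $H_m=zH_{m-1}+z\sum_{k\ge m}\binom{k}{m-1}N_{k,m-1}H_k$, whose coefficients are the $\binom{t}{s}N_{t,s}$. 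Solving this triangular-up-to-a-shift system for the $H_m$ (e.g.\ by eliminating the shift first and then expanding the resulting resolvent as a power series, which terminates because the relevant operator is nilpotent) produces the numbers $M_{i,j}$ as the entries of the solution: $H_m(z)=\bigl(\sum_i M_{i,m}z^i\bigr)/\bigl(\sum_i M_{i,0}z^i\bigr)$, the sum over monotone staircases $j\le s_1<t_1\le\dots\le s_n<t_n\le d$ being exactly this expansion. Summing $\ell_0\cdots\ell_{m-1}H_m$ over $m$ then gives the claimed formula.

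I expect the genuine difficulty to lie in this last step rather than in any isolated identity: one must check that the three-case formula for $N_{m,k}$ (including the boundary values $\ell_{m-1}-\ell_m-1$ and $1$) really does record the rescaled transfer data, and must keep careful track of the index ranges so that the staircase sum --- with its mixture of strict inequalities $s_\nu<t_\nu$ and weak inequalities $t_\nu\le s_{\nu+1}$ --- comes out exactly matching the solution of the shifted recursion. The auxiliary combinatorial input is a handful of Vandermonde-type sums in the $\ell_j$ together with the collapse identity noted above. As sanity checks, specializing to $d=2$ should recover the triangle-free formula of \cite{Ant12} and to $d=3$ the tetrahedron-free formula of \cite{Ant13}.
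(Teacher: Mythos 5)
Your plan follows the same skeleton as the paper's proof: an automaton whose states are the cliques $R(w)$, collapsed by link-regularity to the $d+1$ size classes, a linear system over $\mathbb{Q}(z)$ whose coefficients are computed by inclusion--exclusion over the sets $\Lk(\sigma;\tau)$, and a staircase expansion producing the $M_{i,j}$. The one structural difference is that you run the recursion forward (counting geodesics by the size of their terminal state, $G_m(z)=z\sum_k C_{k,m}G_k(z)$ with $G_0=1$), whereas the paper runs it backward (counting continuations from each state, $\Delta_\sigma=1+z\sum_v\Delta_{\delta(\sigma,v)}$, summed over \emph{ordered} $m$-cliques); the two systems are transposes of one another, and your rescaling $H_m=G_m/(\ell_0\cdots\ell_{m-1})$ does reproduce the coefficients $\binom{k}{m-1}N_{k,m-1}$ --- your transfer-matrix entries, including the vanishing of the correction terms $-(k-j)$ for drops, agree with the paper's $N_{m,k}$ after this rescaling. (Small slip in the prose: the two exceptional cases of $N_{m,k}$ correspond to \emph{rise} and \emph{hold} in your picture, not hold and drop-by-one.)

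The genuine gap is the step you yourself flag: the assertion that $H_m(z)=\bigl(\sum_i M_{i,m}z^i\bigr)/\bigl(\sum_i M_{i,0}z^i\bigr)$ solves the rescaled system. This is the technical core of the whole argument; in the paper it takes two lemmas, one showing $\mathcal{G}_m=z^{-m}(p_m\mathcal{G}_0-q_m)$ for recursively defined polynomials $p_m,q_m$, and a separate induction identifying their coefficients with the staircase sums by splitting each staircase according to whether its last step $t_n$ equals the current index. Your one-line description (``eliminate the shift, expand the resolvent, nilpotence makes it terminate'') is a plausible route to the analogous identity for the transposed system, and the claimed form does check out in low degree (for $d\le 2$ one gets $H_1=P_1/P_0$ and $H_2=P_2/P_0$ on the nose), but as written nothing certifies that the mixture of strict inequalities $s_\nu<t_\nu$ and weak inequalities $t_\nu\le s_{\nu+1}$, or the placement of $j$ as a lower bound $j\le s_1$ rather than an anchor $j=s_1$, comes out correctly. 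That induction must actually be carried out for the proof to be complete; the rest of the sketch is sound.
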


To prove Theorem \ref{thm: main RACG theorem}, we apply Theorem \ref{theorem: chomsky black magic}, a method due to Chomsky and Schützenberger to the grammar that generates the geodesic words of $G$. This yields a large system of equations that can be solved to obtain the geodesic growth series of $G$. Using the fact that $\Gamma$ is link-regular, we apply combinatorial arguments to obtain a simpler system of $d + 1$ equations, where $d$ is the maximal clique size of $\Gamma$. Finally, we rearrange this system into a polynomial recurrence relation, which we solve via induction.

Lastly, in Section \ref{sec: theorem 3}, we focus on NGPs when $(\Gamma, N)$ is a triangle-free, link-regular numbered graph such that the vertex numbering $N$ is constant. We obtain the following system of equations, which can be used to solve for the geodesic growth of $\NGP(\Gamma, N)$:

\setcounter{section}{7}
\setcounter{thm}{0}
\begin{thm}
    Let $\Gamma$ be an $L$-regular (i.e., every vertex in $\Gamma$ has $L$ neighbors) and triangle-free simplicial graph with $n$ vertices. Given an integer $N \geq 2$, let $(\Gamma, N)$ be the numbered graph where all vertex numbers are $N$. The geodesic growth series $\mathcal{G}(z)$ of $\NGP(\Gamma)$ can be determined by solving the following system of equations: for $1 \leq k, \ell \leq \lfloor N / 2 \rfloor$, we have
    \begin{align*}
        \mathcal{G}(z) &= 2z\mathcal{G}_1(z) + 1 \\
        \mathcal{G}_k(z) &= z(\mathcal{G}_{k + 1}(z)
        + 2\mathcal{G}_{1, k}(z) + 2(n - L - 1)\mathcal{G}_1(z)) + n \\
        \mathcal{G}_{\{k, \ell\}}(z)
        &= z(\mathcal{G}_{\{k + 1, \ell\}}(z)
        + 2(L - 1)(\mathcal{G}_{\{1, k\}}(z)
        + \mathcal{G}_{\{1, \ell\}}(z))
        + 2 L(n - 2L)\mathcal{G}_1(z))
        + nL.
    \end{align*}
    Here, $\mathcal{G}_k(z)$ and $\mathcal{G}_{\{k, \ell\}}(z)$ are power series such that $\mathcal{G}_{\lfloor N / 2 \rfloor + 1}(z) = \mathcal{G}_{\{\lfloor N / 2 \rfloor + 1, \ell\}}(z) = 0$ for all $1 \leq \ell \leq \lfloor N / 2\rfloor$.
\end{thm}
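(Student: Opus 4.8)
The plan is to set up a system of mutually recursive generating functions indexed by the "shape" of a geodesic's final segment, and then use the structure of a link-regular, triangle-free numbered graph with constant numbering to reduce the a priori huge Chomsky–Schützenberger system to the three displayed equations. Since $\Gamma$ is triangle-free, the only relations available to shorten a word are the self-relations $v^N = 1$; there are no commutation relations among distinct generators that can be applied (a commutation would require an edge, hence a 2-clique, but any syllable cancellation needs a repeated letter, which with constant numbering behaves uniformly). Thus a word in the generators is non-geodesic precisely when, after using commutations to bring two occurrences of some $v^{\pm 1}$ together, the exponent of that syllable can be reduced to length $< N/2$ in absolute value (using $v^N=1$, so $v^k$ and $v^{k-N}$ have lengths $k$ and $N-k$). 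I would first make precise the notion of the "profile" of a geodesic word: for each generator $v$ appearing, record the residue/length data of the (unique, since $\Gamma$ is triangle-free) block of $v$'s that is "active" at the end, and I would argue that a geodesic is determined up to this bookkeeping by a regular language whose nonterminals are exactly: the start symbol (giving $\mathcal G$), one symbol $\mathcal G_k$ for "the last letter was the start of a fresh syllable that currently has length $k$", and one symbol $\mathcal G_{\{k,\ell\}}$ for "the last two distinct active generators have syllable-lengths $k$ and $\ell$". That two indices suffice — and not three or more — is exactly where triangle-freeness enters: no three pairwise-adjacent generators exist, so at most two "pending" syllables can interact.

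Next I would derive the transitions. Appending a generator to a geodesic word either (i) extends the current active syllable, taking $\mathcal G_k \to \mathcal G_{k+1}$ (valid only while $k+1 \le \lfloor N/2\rfloor$, which is encoded by the boundary condition $\mathcal G_{\lfloor N/2\rfloor+1}=0$); (ii) opens a new syllable with a generator adjacent to the current active one, of which there are $L-1$ choices that keep exactly the last pair active (the "$-1$" removing the generator we just came from), contributing the $(L-1)\mathcal G_{\{1,k\}}$-type terms; (iii) opens a new syllable with a generator non-adjacent to everything currently pending, of which there are $n-L-1$ in the single-pending case and $n-2L$ in the double-pending case (here link-regularity/triangle-freeness guarantees the two pending generators have disjoint links, so their neighborhoods together with themselves account for $2L$ vertices), contributing the $(n-L-1)\mathcal G_1$ and $L(n-2L)\mathcal G_1$ terms; the leading factors of $2$ are the two sign choices $v^{\pm 1}$ for the new syllable, and the extra factor $L$ in the $\{k,\ell\}$ equation counts the ways the second pending generator could have been chosen among the neighbors of the first. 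The constant terms $n$ and $nL$ count the base cases: the empty-suffix words of the appropriate profile (one active generator: $n$ choices of vertex; two: $nL$ ordered adjacent pairs up to the already-fixed sign data). The top equation $\mathcal G = 2z\mathcal G_1 + 1$ just says every nonempty geodesic begins with one of the $2n/n = $ ... — more precisely, after normalizing, with a first letter that is one of $2$ signs times the $\mathcal G_1$-bookkeeping (the factor $n$ being absorbed into $\mathcal G_1$'s own constant term), plus the empty word contributing $1$.

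The main obstacle I expect is the careful justification of the counting coefficients $n-L-1$, $n-2L$, and the multiplicities $L-1$ and $L$: one must verify (a) that appending a letter to a geodesic never creates a cancellation that was not already "visible" in the last one or two syllables — i.e., that the language really is regular with only these finitely many states, which requires showing that once a syllable is "buried" behind two later distinct generators it can never again be brought adjacent to a future letter (this is a normal-form / ping-pong argument on the graph product, using triangle-freeness so that the supports of non-commuting syllables form an induced path, which cannot close up); and (b) that the number of "new" generators in cases (ii)/(iii) is independent of which pending generators we have, which is precisely the content of $L$-regularity together with triangle-freeness (disjointness of links of adjacent vertices, so $|N(u)\cup N(v)\cup\{u,v\}| = 2L$ when $uv\in E\Gamma$, and $|N(v)\cup\{v\}|=L+1$). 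I would handle (a) by invoking the standard description of geodesics in graph products (a word is geodesic iff no sequence of commutations produces a reducible syllable), reducing everything to a local analysis of the last two maximal syllables, and handle (b) by the elementary clique-counting just indicated. Once the system is shown to correctly enumerate geodesics by length, $\mathcal G(z)$ is recovered by solving it — finitely many linear equations over $\mathbb{Q}(z)$ — completing the proof.
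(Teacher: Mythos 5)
Your proposal is correct in outline and follows essentially the same route as the paper: the states are the powered cliques of $(\Gamma,N)$ (which by triangle-freeness have support of size at most two), the system comes from applying Theorem \ref{theorem: chomsky black magic} to the associated grammar, the factors of $2$ come from the involution $v\mapsto v^{-1}$ identifying the growth series of the positive and negative sign states, and the coefficients $n-L-1$, $L-1$, and $L(n-2L)$ come from $L$-regularity together with the disjointness of links of adjacent vertices; the paper performs your step (b) by literally summing the per-state equations over vertices and over directed edges and counting multiplicities. One sentence of your preamble is simply false and should be deleted: a triangle-free graph has plenty of applicable commutation relations (one for each edge), and they are essential --- they are exactly why the two-index states $\mathcal{G}_{\{k,\ell\}}$ are needed --- though your subsequent transition analysis treats adjacency correctly, so this is a misstatement rather than a gap; relatedly, your locality criterion (``buried behind two later distinct generators'') is not quite right, since a single non-adjacent letter already deactivates a pending syllable while arbitrarily many adjacent ones do not. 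Finally, the paper does not re-derive regularity of the geodesic language here as your step (a) proposes; it invokes the automaton already constructed in Theorem \ref{theorem: fsa} via the product-automaton construction and the graph-product normal form, which is the rigorous version of what you sketch.
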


To derive the system, we once again apply the method in Theorem \ref{theorem: chomsky black magic}. We do not derive a closed-form formula for the geodesic growth, but computer software such as Sage can be used to solve the system for small enough $N$.

We would like to thank and acknowledge Mark Pengitore, Alec Traaseth, and Darien Farnham of the University of Virginia's Mathematics Department for their support and assistance on this paper. Also, we would like to extend our gratitude to all the organizers of the 2022 UVA Topology REU who gave us the opportunity to make this paper possible. Lastly, we acknowledge Yago Antolín for suggesting the problem explored in this paper.

\setcounter{thm}{1}
\setcounter{section}{1}

\section{Notation}
\label{sec: notation}
We compile a list of common notation used throughout this paper:
\begin{itemize}
    \item $\N$ will denote the positive integers (so $0 \not\in \N$).
    \item A group $G$ with a distinguished generating set $S$ is typically denoted by the pair $(G,S)$.
    \item The letter $\Gamma$ usually indicates a simplicial graph (that is, an undirected graph with no double edges or loops). The vertices of $\Gamma$ are denoted $V\Gamma$, and the edges of $\Gamma$ are denoted $E\Gamma$.
    \item A numbered graph is represented by the pair $(\Gamma, N)$. Given a subset $U \subseteq V\Gamma$, $N(U)$ denotes the multiset $\{N(v) \mid v \in U\}$.
    \item The NGP associated to some numbered graph $(\Gamma, N)$ is written $\NGP(\Gamma, N)$, or just $\NGP(\Gamma)$. Similarly, the RACG associated to some graph $\Gamma$ is written $\RACG(\Gamma)$.
    \item The letter $\mathcal{L}$ usually indicates a language, the letter $\mathcal{D}$ usually indicates a finite state automaton, and the letter $C$ usually indicates a grammar.
    \item The letters $\sigma$, $\tau$, and $\pi$ usually indicate cliques. The letter $\Sigma$ usually indicates a powered clique (cf. Definition \ref{def: powered clique}).
    \item The power profile (cf. Definition \ref{def: power profile}) of a powered clique $\Sigma$ is written $P(\Sigma)$. The letters $P$ and $Q$ usually indicate power profiles.
\end{itemize}

\section{Regular languages}
\label{sec: language}
In this section, we develop the basic theory surrounding regular languages. The reader may wish to refer to a standard text such as \cite{Lin22} for a more in-depth discussion on these topics. Our aim will be to define the so-called \textit{growth series} of a language, and to describe a powerful technique (Theorem \ref{theorem: chomsky black magic}) used to compute the growth series of a regular language.

\begin{defi}[Words and languages]
    An \textit{alphabet} is a non-empty finite set, whose elements we call \textit{letters}. A \textit{word} $x$ over an alphabet $\mathcal{A}$ is a finite sequence of the elements from $\mathcal{A}$, usually written in the form $x = a_1 \cdots a_n$. A sequence $a_i a_{i + 1} \cdots a_j$ is called a \textit{subword} of $x$. The \textit{empty word} is denoted by $\varepsilon$. A set of words over $\mathcal{A}$ is a \textit{language} over $\mathcal{A}$. The language consisting of all words over $\mathcal{A}$ is denoted by $\mathcal{A}^*$. For a nonnegative integer $n$, the set of all words of length $n$ in $\mathcal{A}^*$ is denoted by $\mathcal{A}^n$.

    The \textit{length} of a word $x \in \mathcal{A}^*$ is its length when viewed as a sequence, and is written $|x|$. The \textit{concatenation} of two words $x = a_1 \cdots a_n$ and $y = b_1 \cdots b_m$ is the word $xy = a_1 \cdots a_nb_1 \cdots b_m$. Given a word $x$, we write $x^n = x \cdots x$ for word formed by concatenating $x$ to itself $n$ times, with $x^0 = \varepsilon$.
\end{defi}

Regular languages are languages that are simple enough to be described by the following model of computation:

\begin{defi}[Finite state automata]
    A \textit{finite state automaton} (FSA) $\mathcal{D}$ is a $5$-tuple $(Q, \mathcal{A}, \delta, \allowbreak q_0, A)$, where
    \begin{itemize}
        \item $Q$ is a finite set of \textit{states},
        \item $\mathcal{A}$ is an alphabet called the \textit{input alphabet},
        \item $\delta : Q \times \mathcal{A} \to Q$ is the \textit{transition function},
        \item $q_0 \in Q$ is the \textit{start state},
        \item $A \subseteq Q$ is the set of \textit{accept states}.
    \end{itemize}
    If we have $\delta(q, a) = q'$, we will say that $\mathcal{D}$ has an $a$-transition from $q$ to $q'$. Also, we define the \textit{extended transition function} $\delta^* : Q \times \mathcal{A}^* \to Q$ recursively by
    \[
        \delta^*(q, \varepsilon) = q, \quad
        \delta^*(q, xa) = \delta(\delta^*(q, x), a).
    \]
    Given a word $x \in \mathcal{A}^*$, we say that $x$ \textit{ends} at the state $\delta^*(q_0, x)$. If $x$ ends at an accept state, then we say $\mathcal{D}$ \textit{accepts the word $x$}, else, we say $\mathcal{D}$ \textit{rejects the word $x$}. The set of all strings accepted by $\mathcal{D}$ is called the \textit{language accepted by $\mathcal{D}$}.
\end{defi}

\begin{defi}[Regular languages]
    A \textit{regular language} is a language accepted by some FSA.
\end{defi}

Another way of describing a language is through a \textit{grammar}, or a set of rules that describe how words in the language are generated.

\begin{defi}[Regular grammars]
    A \textit{regular grammar} $C$ is a $4$-tuple $(V, \mathcal{A}, \allowbreak R, \allowbreak \mathbf{S})$, where
    \begin{itemize}
        \item $V$ is a finite set of \textit{variables} (represented by boldface characters),
        \item $\mathcal{A}$ is a finite alphabet of \textit{terminals},
        \item $R$ is a finite set of \textit{production rules}, which are in the form $\mathbf{A} \to a\mathbf{B}$ or $\mathbf{A} \to \varepsilon$ for $\mathbf{A}, \mathbf{B} \in V$ and $a \in \mathcal{A}$,
        \item $\mathbf{S} \in V$ is the \textit{start variable}.
    \end{itemize}
    If $x \in \mathcal{A}^*$, $\mathbf{A} \in V$, and $\mathbf{A} \to \alpha$ is a production rule, we say $x\mathbf{A}$ \textit{yields} $x\alpha$, written $x\mathbf{A} \Rightarrow x\alpha$ (here, $\alpha$, $x\mathbf{A}$, and $x\alpha$ are viewed as strings over $V \sqcup \mathcal{A}$). If $\alpha, \beta \in (V \sqcup \mathcal{A})^*$, we say $\alpha$ \textit{derives} $\beta$ if there exists a sequence $\alpha_1, \dots, \alpha_n$ such that $\alpha \Rightarrow \alpha_1 \Rightarrow \dots \Rightarrow \alpha_n \Rightarrow \beta$. The \textit{language generated by $C$} is the set of all words in $\mathcal{A}^*$ that can be derived from the start symbol $\mathbf{S}$.
\end{defi}

\begin{rem}
    A more standard definition of a regular grammar allows for rules of the form $\mathbf{A} \to a$. The definition above is equivalent, and will be more convenient for our purposes.
\end{rem}

A well-known result is that regular grammars are equivalent to FSAs, in that they both characterize the class of regular languages. The following proposition provides an explicit description of the regular grammar that generates a given regular language.

\begin{prop}
    \label{proposition: grammar rules}
    Let $\mathcal{L}$ be a regular language, and let $\mathcal{D} = (Q, \mathcal{A}, \delta, \mathbf{q}_0, A)$ be a FSA accepting $\mathcal{L}$. Then $\mathcal{L}$ is generated by some regular grammar $C = (Q, \mathcal{A}, R, \mathbf{q}_0)$, consisting of the production rules
    \begin{itemize}
        \item $\mathbf{q} \to a\delta(\mathbf{q}, a)$ for all $\mathbf{q} \in Q$ and $a \in \mathcal{A}$,
        \item $\mathbf{q} \to \varepsilon$ for all $\mathbf{q} \in A$.
    \end{itemize}
\end{prop}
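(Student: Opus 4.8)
The plan is to show that the grammar $C$ faithfully simulates runs of the automaton $\mathcal{D}$, by exploiting the fact that every production is either of the form $\mathbf{q} \to a\,\delta(\mathbf{q},a)$ (which consumes one terminal and replaces the single trailing variable by another single variable) or of the form $\mathbf{q} \to \varepsilon$ (which deletes the trailing variable and terminates the derivation). Consequently every sentential form reachable from $\mathbf{q}_0$ is either a string of the shape $w\mathbf{q}$ with $w \in \mathcal{A}^*$ and $\mathbf{q} \in Q$, or a terminal word $w \in \mathcal{A}^*$, and in the latter case no further production applies. I would record this observation first, since it is what makes the derivation structure rigid enough to match the (deterministic) transition function $\delta$.

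The technical core is the following claim: for every $w \in \mathcal{A}^*$ and every $\mathbf{q} \in Q$, one has $\mathbf{q}_0 \Rightarrow^* w\mathbf{q}$ using only productions of the first type if and only if $\delta^*(\mathbf{q}_0, w) = \mathbf{q}$. I would prove this by induction on $|w|$. The base case $w = \varepsilon$ is immediate: the only length-$0$ sentential form of the form $w\mathbf{q}$ reachable from $\mathbf{q}_0$ is $\mathbf{q}_0$ itself, and $\delta^*(\mathbf{q}_0,\varepsilon) = \mathbf{q}_0$. For the inductive step, write $w = w'a$ with $a \in \mathcal{A}$; by the rigidity observation any derivation $\mathbf{q}_0 \Rightarrow^* w'a\,\mathbf{q}$ must pass through a unique last sentential form of the shape $w'\mathbf{q}'$ and then apply the production $\mathbf{q}' \to a\,\delta(\mathbf{q}',a)$, so that $\mathbf{q} = \delta(\mathbf{q}',a)$. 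By the induction hypothesis $\mathbf{q}_0 \Rightarrow^* w'\mathbf{q}'$ is equivalent to $\delta^*(\mathbf{q}_0,w') = \mathbf{q}'$, and then $\delta^*(\mathbf{q}_0, w'a) = \delta(\delta^*(\mathbf{q}_0,w'),a) = \delta(\mathbf{q}',a) = \mathbf{q}$; the converse direction unwinds the same equalities, using determinism of $\delta$ to see the production applied at the last step is forced.

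Finally I would assemble the two halves. A terminal word $x \in \mathcal{A}^*$ is generated by $C$ exactly when there is a derivation $\mathbf{q}_0 \Rightarrow^* x$; by the rigidity observation such a derivation consists of a sequence of first-type productions reaching some $x\mathbf{q}$, followed by a single application of $\mathbf{q} \to \varepsilon$, which exists in $R$ precisely when $\mathbf{q} \in A$. By the claim this happens iff $\delta^*(\mathbf{q}_0, x) = \mathbf{q}$ for some $\mathbf{q} \in A$, i.e.\ iff $\mathcal{D}$ accepts $x$. Hence the language generated by $C$ equals $\mathcal{L}$. The main obstacle is purely bookkeeping: making the rigidity observation precise enough that one may legitimately ``read off'' from a derivation a unique run of $\mathcal{D}$ and vice versa, so that the induction goes through in both directions; once that is in place the rest is a direct unwinding of the definitions of $\delta^*$ and of derivation.
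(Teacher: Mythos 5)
Your proposal is correct. The paper states Proposition \ref{proposition: grammar rules} without proof, treating it as a standard fact, and your argument is exactly the standard one that would be supplied: the rigidity observation that every sentential form is either $w\mathbf{q}$ or a terminal word, the induction showing $\mathbf{q}_0 \Rightarrow^* w\mathbf{q}$ if and only if $\delta^*(\mathbf{q}_0,w) = \mathbf{q}$, and the final step matching the $\varepsilon$-productions with the accept states. (One small remark: determinism of $\delta$ is not actually needed to force the last production in the inductive step --- the terminal $a$ appearing in position $|w|$ already pins down that a production of the form $\mathbf{q}' \to a\,\delta(\mathbf{q}',a)$ was applied --- but this does not affect correctness.)
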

\noindent We will be most interested in the following generating function associated to a language.

\begin{defi}[Growth series]
    The \textit{growth series} of a language $\mathcal{L}$ over an alphabet $\mathcal{A}$ is the formal power series $\sum_{n = 0}^\infty |\mathcal{L} \cap \mathcal{A}^n|z^n$, where the coefficient of $z^n$ is the number of length $n$ words in $\mathcal{L}$.
\end{defi}

Sections \ref{sec: theorem 1} through \ref{sec: theorem 3} will be dedicated to studying the growth series associated to the \textit{geodesic language} of a group. Our primary method of computing these series will be the following important result. One of the original descriptions of this technique (in a far more general setting) is due to \cite{Cho63}; we include the proof for completeness.

\begin{thm}
    \label{theorem: chomsky black magic}
    Suppose that $C = (V, \mathcal{A}, R, \mathbf{A}_1)$ is a regular grammar with variables $V = \{\mathbf{A}_1, \dots, \mathbf{A}_n\}$ satisfying two conditions:
    \begin{itemize}
        \item The grammar $C$ is \textit{unambiguous}, in the sense that every word $x$ in the language generated by $C$ has a unique derivation from the start symbol $\mathbf{A}_1$.
        \item Every variable $\mathbf{A}_i$ is \textit{reachable} from the start symbol $\mathbf{A}_1$, in the sense that there is some word $\alpha \in (V \sqcup \mathcal{A})^*$ containing $\mathbf{A}_i$ that can be derived from $\mathbf{A}_1$.
    \end{itemize}
    For each $i = 1, \dots, n$, let $\mathcal{L}_i$ be the language consisting of all words that can be derived from $\mathbf{A}_i$, and let $A_i(z)$ be the growth series associated to $\mathcal{L}_i$. Then each series $A_i(z)$ satisfies the equation
    \[
        A_i(z) =
        \begin{cases}
            z\sum_{(\mathbf{A}_i \to a\mathbf{A}_j) \in R} A_j(z) + 1
             & \text{if $\mathbf{A}_i \to \varepsilon$ is a production rule in $R$,} \\
            z\sum_{(\mathbf{A}_i \to a\mathbf{A}_j) \in R} A_j(z)
             & \text{otherwise.}
        \end{cases}
    \]
\end{thm}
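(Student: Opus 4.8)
The plan is to unwind the recursive structure of derivations in a regular grammar and read off the equations coefficient by coefficient in $z$. Fix $i$. Because every production rule has the form $\mathbf{A} \to a\mathbf{B}$ or $\mathbf{A}\to\varepsilon$, any derivation from $\mathbf{A}_i$ has the shape $\mathbf{A}_i \Rightarrow a_1\mathbf{B}_1 \Rightarrow a_1 a_2 \mathbf{B}_2 \Rightarrow \cdots \Rightarrow a_1\cdots a_k\mathbf{B}_k \Rightarrow a_1\cdots a_k$, where the last step uses a rule $\mathbf{B}_k\to\varepsilon$, and the word derived is $a_1\cdots a_k$. In particular, once a terminal appears it is never removed, so the only derivation producing the empty word is the single step $\mathbf{A}_i\Rightarrow\varepsilon$, which is available precisely when $\mathbf{A}_i\to\varepsilon \in R$. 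Hence $\varepsilon \in \mathcal{L}_i$ if and only if $\mathbf{A}_i\to\varepsilon\in R$, which accounts for the constant term of the claimed identity.

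For the positive-degree terms I would establish the set identity, valid for every $n\geq 1$,
\[
  \mathcal{L}_i \cap \mathcal{A}^n \;=\; \bigsqcup_{(\mathbf{A}_i\to a\mathbf{A}_j)\in R} a\cdot\bigl(\mathcal{L}_j\cap\mathcal{A}^{n-1}\bigr),
\]
where $a\cdot L := \{a y : y \in L\}$ and $\bigsqcup$ asserts the union is disjoint. The inclusion $\supseteq$ is immediate: given a rule $\mathbf{A}_i\to a\mathbf{A}_j$ and $y\in\mathcal{L}_j$, prepending the step $\mathbf{A}_i\Rightarrow a\mathbf{A}_j$ to a derivation of $y$ from $\mathbf{A}_j$ produces a derivation of $ay$ from $\mathbf{A}_i$. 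For $\subseteq$, a word $x\in\mathcal{L}_i$ with $|x|\geq 1$ has a derivation whose first step, by the previous paragraph, must use some rule $\mathbf{A}_i\to a\mathbf{A}_j$; deleting this step leaves a valid derivation of the tail $y$ (where $x = ay$) from $\mathbf{A}_j$ — valid because each yield-step $x\mathbf{A}\Rightarrow x\alpha$ only rewrites the trailing variable — so $y \in \mathcal{L}_j\cap\mathcal{A}^{n-1}$. Disjointness is exactly where unambiguity is used: two summands with distinct first letters are automatically disjoint, and if a word lay in $a\cdot(\mathcal{L}_j\cap\mathcal{A}^{n-1})$ and in $a\cdot(\mathcal{L}_{j'}\cap\mathcal{A}^{n-1})$ for rules with the same $a$ but $j\neq j'$, then the two rules would give two distinct derivations of that word from $\mathbf{A}_i$, contradicting unambiguity.

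Taking cardinalities in the displayed identity yields $|\mathcal{L}_i\cap\mathcal{A}^n| = \sum_{(\mathbf{A}_i\to a\mathbf{A}_j)\in R} |\mathcal{L}_j\cap\mathcal{A}^{n-1}|$ for all $n\geq 1$. Since the coefficient of $z^n$ in $A_i(z)$ is $|\mathcal{L}_i\cap\mathcal{A}^n|$ and the coefficient of $z^n$ in $zA_j(z)$ is $|\mathcal{L}_j\cap\mathcal{A}^{n-1}|$, this says that $A_i(z)$ and $z\sum_{(\mathbf{A}_i\to a\mathbf{A}_j)\in R} A_j(z)$ agree in every degree $n\geq1$; together with the degree-$0$ computation of the preceding paragraph, this gives the asserted power-series equation. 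The sum over $R$ is finite, so there is no convergence issue, and the reachability hypothesis plays no role in this particular computation.

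I expect the only genuine subtlety — the main obstacle — to be the disjointness half of the set identity, namely spelling out that two derivations of the same word beginning with different rules would violate unambiguity; the rest is a routine unwinding of the definitions of derivation and of the growth series, with a small amount of care needed to confirm that the tail of a derivation from $\mathbf{A}_i$ is itself an honest derivation from the intermediate variable.
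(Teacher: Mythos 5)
Your decomposition of $\mathcal{L}_i \cap \mathcal{A}^n$ into the sets $a\cdot(\mathcal{L}_j\cap\mathcal{A}^{n-1})$ is exactly the paper's, and the constant-term analysis and the coefficient comparison are fine. But there is a genuine gap in the disjointness step, and it is located precisely where you claim there is no issue. You argue that if a word lay in both $a\cdot(\mathcal{L}_j\cap\mathcal{A}^{n-1})$ and $a\cdot(\mathcal{L}_{j'}\cap\mathcal{A}^{n-1})$ with $j\neq j'$, then it would have two distinct derivations \emph{from $\mathbf{A}_i$}, "contradicting unambiguity." However, unambiguity as hypothesized says only that every word in the language generated by $C$ --- i.e., every word derivable from the start symbol $\mathbf{A}_1$ --- has a unique derivation \emph{from $\mathbf{A}_1$}. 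It says nothing directly about uniqueness of derivations from an arbitrary variable $\mathbf{A}_i$, and the word $ay$ in question need not even belong to $\mathcal{L}_1$. So the contradiction does not follow as stated.

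This is exactly what the reachability hypothesis --- which you explicitly assert "plays no role" --- is for. The paper's proof takes a derivation $\mathbf{A}_1 \Rightarrow \dots \Rightarrow b_1\cdots b_\ell\mathbf{A}_i$ (which exists by reachability and the shape of the production rules) and extends it by each of the two putative derivations of $ay$ from $\mathbf{A}_i$, obtaining two distinct derivations of $b_1\cdots b_\ell ay$ from $\mathbf{A}_1$; only then does unambiguity apply. The hypothesis is not removable: if $\mathbf{A}_i$ is unreachable and has rules $\mathbf{A}_i \to a\mathbf{A}_j$ and $\mathbf{A}_i \to a\mathbf{A}_{j'}$ with $\mathcal{L}_j = \mathcal{L}_{j'} = \{\varepsilon\}$, then $\mathcal{L}_i = \{a\}$ has a single word of length $1$ while the claimed formula would count $2$, even though the grammar can be perfectly unambiguous from $\mathbf{A}_1$. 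Everything else in your write-up matches the paper's argument; inserting the reachability bridge into the disjointness step closes the gap.
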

\begin{proof}
    Fix a variable $\mathbf{A}_i$. Let $c_{j, n} = |\mathcal{L}_j \cap \mathcal{A}^n|$ denote the $n$th coefficient of the power series $A_j(z)$. Comparing coefficients, we see that proving the theorem amounts to showing that
    \begin{equation}
        \label{eq: 1, theorem: chomsky black magic}
        c_{i, n}
        = \sum_{(\mathbf{A}_i \to a\mathbf{A}_j) \in R} c_{j, n - 1}
    \end{equation}
    for all $n \geq 1$, and
    \begin{equation}
        \label{eq: 2, theorem: chomsky black magic}
        c_{i, 0} =
        \begin{cases}
            1 & \text{if $\mathbf{A}_i \to \varepsilon$ is a production rule in $R$,} \\
            0 & \text{otherwise.}
        \end{cases}
    \end{equation}
    Equation (\ref{eq: 2, theorem: chomsky black magic}) is immediate: $\mathbf{A}_i \to \varepsilon$ is a production rule if and only if $\varepsilon$ (which is the only word of length $0$) is a word in $\mathcal{L}_i$.

    Next, we prove (\ref{eq: 1, theorem: chomsky black magic}) for $n \geq 1$. Consider a word $x \in \mathcal{L}_i \cap \mathcal{A}^n$. This word must be produced by some derivation
    \[
        \mathbf{A}_i \Rightarrow a\mathbf{A}_{j} \Rightarrow
        \dots \Rightarrow ay = x.
    \]
    Hence, there is some rule $\mathbf{A}_i \to a\mathbf{A}_j$ in $R$ such that $x = ay$ for $y \in \mathcal{L}_j \cap \mathcal{A}^{n - 1}$. This gives us a map
    \[
        \Phi : \mathcal{L}_i \cap \mathcal{A}^n
        \to \bigcup_{(\mathbf{A}_i \to a\mathbf{A}_j) \in R}
        \{a\} \times (\mathcal{L}_j \cap \mathcal{A}^{n - 1}),
        \quad \Phi(ay) = (a, y).
    \]
    Note that this map is a bijection, with inverse given by $(a, y) \mapsto ay$. We claim that the codomain of $\Phi$ is a disjoint union. Suppose toward a contradiction that $(a, y)$ were in both $\{a\} \times (\mathcal{L}_j \cap \mathcal{A}^{n - 1})$ and $\{a\} \times (\mathcal{L}_k \cap \mathcal{A}^{n - 1})$ for distinct $j, k$. Because $\mathbf{A}_i$ is reachable from $\mathbf{A}_1$, there exists a derivation
    \[
        \mathbf{A}_1
        \Rightarrow \dots \Rightarrow b_1 \cdots b_\ell \mathbf{A}_i.
    \]
    This derivation can then be continued as
    \[
        \mathbf{A}_1
        \Rightarrow \dots \Rightarrow b_1 \cdots b_\ell \mathbf{A}_i
        \Rightarrow b_1 \cdots b_\ell a \mathbf{A}_j
        \Rightarrow \dots \Rightarrow b_1 \cdots b_\ell ay
    \]
    or
    \[
        \mathbf{A}_1
        \Rightarrow \dots \Rightarrow b_1 \cdots b_\ell \mathbf{A}_i
        \Rightarrow b_1 \cdots b_\ell a \mathbf{A}_k
        \Rightarrow \dots \Rightarrow b_1 \cdots b_\ell ay.
    \]
    But this gives two distinct derivations of the word $b_1 \cdots b_\ell ay$, contradicting the unambiguity of $C$. Thus, the bijectivity of $\Phi$ implies
    \[
        c_{i, n}
        = |\mathcal{L}_i \cap \mathcal{A}^n|
        = \left|\bigcup_{(\mathbf{A}_i \to a\mathbf{A}_j) \in R}
        \{a\} \times (\mathcal{L}_j \cap \mathcal{A}^{n - 1})\right|
        = \sum_{(\mathbf{A}_i \to a\mathbf{A}_j) \in R} c_{j, n - 1},
    \]
    which proves (\ref{eq: 1, theorem: chomsky black magic}).
\end{proof}

\section{Numbered graph products}
\label{sec: ngp}
\begin{defi}[Graph products]
    Let $\Gamma$ be a simplicial graph, and let $G_v = \langle S_v \mid R_v \rangle$ be a group for all $v \in V\Gamma$. We define the \textit{graph product} of the groups $(G_v, S_v)$ over $\Gamma$ as the group with
    \begin{itemize}
        \item generators $\bigcup_{v \in V} S_v$,
        \item relations $\bigcup_{v \in V} R_v$ and $[s_u, s_v] = 1$ for all $\{u, v\} \in E$ and $s_u \in S_u, s_v \in S_v$.
    \end{itemize}
    Here, $[s_v, s_w]$ denotes the commutator $s_vs_ws_v^{-1}s_w^{-1}$.
\end{defi}

\begin{exa}
    When $\Gamma$ is totally disconnected, a graph product over $\Gamma$ is the same as a free product. When $\Gamma$ is complete, a graph product over $\Gamma$ is the same as a direct product. Thus, graph products generalize both these concepts.
\end{exa}

\begin{defi}[Numbered graph products]
    A \textit{numbered graph} $(\Gamma, N)$ consists of a simplicial graph $\Gamma$ and a map $N : V\Gamma \to \N$. We will refer to the numbers $N(v)$ for $v \in V\Gamma$ as the \textit{vertex numbers} of $(\Gamma, N)$. The \textit{numbered graph product} (NGP) associated to the numbered graph $(\Gamma, N)$, written $\NGP(\Gamma, N)$ or $\NGP(\Gamma)$, is the graph product of the cyclic groups $G_v = \langle v \mid v^{N(v)} = 1 \rangle$. We call the set $S = V\Gamma \cup (V\Gamma)^{-1}$ the \textit{standard generating set} of $\NGP(\Gamma)$.
\end{defi}

\begin{exa}
    Consider the following numbered graph $(\Gamma, N)$, where each vertex is labeled with its corresponding vertex number:
    \begin{center}
        \begin{tikzpicture}
            \path (0, 2) coordinate (v1);
            \path (2, 2) coordinate (v2);
            \path (2, 0) coordinate (v3);
            \path (0, 0) coordinate (v4);
            \draw[fill] (v1) circle (2pt);
            \draw[fill] (v2) circle (2pt);
            \draw[fill] (v3) circle (2pt);
            \draw[fill] (v4) circle (2pt);
            \draw (v1) node[above left] {$v_1, 20$} -- (v2) node[above right] {$v_2, 7$} -- (v3) node[below right] {$v_3, 2$} -- (v4) node[below left] {$v_4, 13$} -- cycle;
        \end{tikzpicture}
    \end{center}
    The NGP associated to $(\Gamma, N)$ is the group
    \[
        \langle v_1, v_2, v_3, v_4
        \mid v_1^{20} = v_2^7 = v_3^2 = v_4^{13} = 1,
        [v_1, v_2] = [v_2, v_3] = [v_3, v_4] = [v_4, v_1] = 1 \rangle.
    \]
    The standard generating set of $\NGP(\Gamma)$ is $S = \{v_1, v_1^{-1}, v_2, v_2^{-1}, v_3, v_4, v_4^{-1}\}$. Note that $v_3$ and $v_3^{-1}$ are equal, and thus are not distinct elements in $S$.

    We can write this particular NGP in a simpler form. Given a word $x \in S^*$, we may use the commuting relations to shuffle the letters $v_1^{\pm 1}, v_3$ to the left and the letters $v_2^{\pm 1}, v_4^{\pm 1}$ to the right. This allows us to rewrite the group element represented by $x$ as the concatenation of a word $y \in \langle v_1, v_3\rangle$ and a word $z \in \langle v_2, v_4\rangle$. There are no relations between $v_1$ and $v_3$, so they generate a free product; similarly for $v_2$ and $v_4$. Hence, we have $\NGP(\Gamma) \cong (\Z/20\Z * \Z/2\Z) \oplus (\Z/7\Z * \Z/13\Z)$.
\end{exa}

\begin{defi}[Geodesic languages]
    Let $G$ be a group with a symmetric generating set $S$. There is a natural evaluation map $\mathrm{ev}_{(G, S)} : S^* \to G$ that sends a word $s_1 \cdots s_n \in S^*$ to the product $s_1 \cdots s_n \in G$. We say $x \in S^*$ \textit{evaluates in $G$} to $\mathrm{ev}_{(G, S)}(x)$. A word $x \in S^*$ is called a \textit{geodesic} of $(G, S)$ if there is no shorter word $y \in S^*$ that evaluates to the same element as $x$. The set of all geodesics is called the \textit{geodesic language} of $(G, S)$. The \textit{geodesic growth series} of $(G, S)$ is the growth series of the geodesic language of $(G, S)$.
\end{defi}

\begin{rem}
    A more common interpretation of a geodesic is as the shortest \textit{path} in the \textit{Cayley graph} of a group $(G, S)$. These paths are in bijection with words over the symmetric generating set $S \cup S^{-1}$. For this reason, it is crucial in the definition of a geodesic that the generating set $S$ is symmetric. This leads to minor inconveniences in the case of numbered graph products since the natural choice of symmetric generating set for a cyclic group of order $2$ contains a single element, whereas for a cyclic group of any other order the symmetric generating set contains two elements. This discrepancy will lead to important edge cases in the results to follow.
\end{rem}

Note that whenever we speak of the geodesics (or geodesic growth) of some numbered graph product $\NGP(\Gamma)$, we are always working with respect to the standard generating set of $\NGP(\Gamma)$.

In the remainder of this section, our goal will be to construct a FSA that recognizes the geodesic language of an NGP. It is already known that this language is regular; the paper \cite{Loe02} constructs a FSA that recognizes the geodesic language of any graph product whose vertex groups have regular geodesic languages. To do this, they modify the FSA's associated to individual vertex groups, then show that the \textit{product} of these FSA's recognize the geodesic language of the graph product. We will follow their construction, but we will interpret the product FSA in a setting specific to NGPs. In particular, the states will be objects called \textit{powered cliques}.

\begin{defi}[Cliques and Links]
    \label{def: cliques and links}
    Let $\Gamma$ be a simplicial graph. A subset $\sigma \subset V\Gamma$ is called a \textit{clique} if the vertices of $\sigma$ span a complete subgraph of $\Gamma$. If $\sigma$ has $i$ elements, we call $\sigma$ an \textit{$i$-clique}. A subset $\tau \subseteq \sigma$ is called a \textit{subclique} of $\sigma$. We define the \textit{link} of a clique to be the set
    \[
        \Lk(\sigma) = \{ v \in V\Gamma \setminus \sigma \mid \{v\} \cup \sigma \text{ is a clique}\}
    \]
    When $\sigma$ is a singleton, we write $\Lk(v)$ instead of $\Lk(\{v\})$.
\end{defi}

\begin{defi}[Powered cliques]
\label{def: powered clique}
    A \textit{powered clique} of the numbered graph $(\Gamma, N)$ is a map $\Sigma: V\Gamma \rightarrow \Z$ satisfying the conditions:
    \begin{itemize}
        \item The \textit{support} of $\Sigma$, defined by $\supp \Sigma \coloneqq \{v \in V\Gamma \mid \Sigma(v) \neq 0\}$, is a clique.
        \item For $v \in V\Gamma$, $\Sigma(v)$ satisfies the bounds
              \[
                  \begin{cases}
                      0 \leq \Sigma(v) \leq 1
                       & \text{if $N(v) = 2$,} \\
                      -\lfloor N(v) / 2 \rfloor \leq \Sigma(v)
                      \leq \lfloor N(v) / 2 \rfloor
                       & \text{otherwise,}     \\
                  \end{cases}
              \]
              where $\lfloor \cdot \rfloor$ is the floor function.
    \end{itemize}
    We refer to the number $\Sigma(v)$ as the \textit{power of $\Sigma$ at the vertex $v$}. We will often define a powered clique by first specifying its support, then defining its power at vertices in its support. The empty powered clique is the map $\Sigma : V\Gamma \rightarrow \Z$ that has empty support.
\end{defi}

\begin{thm}
    \label{theorem: fsa}
    Let $(\Gamma, N)$ be a numbered graph. The geodesic language of $\NGP(\Gamma)$ is accepted by the FSA $\mathcal{D}$, where:
    \begin{itemize}
        \item The accept states are the powered cliques of $(\Gamma, N)$, and there is a single reject state $q_\mathrm{rej} \in Q \setminus A$. The start state is the empty powered clique.
        \item The transition map $\delta$ is defined as follows. Given a vertex $v \in V\Gamma$, if $\Sigma$ is a powered clique such that $0 \leq \Sigma(v) < \lfloor N(v) / 2\rfloor$, we define the transition $\delta(\Sigma, v)$ to be the powered clique with support $(\supp(\Sigma) \cap \allowbreak \Lk(v)) \cup \{v\}$ and powers
              \[
                  \delta(\Sigma, v)(u) =
                  \begin{cases}
                      \Sigma(u)     & \text{if $u \in \Lk(v)$,} \\
                      \Sigma(v) + 1 & \text{if $u = v$.}
                  \end{cases}
              \]
              Symmetrically, if $N(v) \neq 2$ and $\Sigma$ is a powered clique such that $-\lfloor N(v) / 2 \rfloor < \Sigma \leq 0$, we define the transition $\delta(\Sigma, v^{-1})$ to be the powered clique with support $(\supp(\Sigma) \cap \allowbreak \Lk(v)) \cup \{v\}$ and powers
              \[
                  \delta(\Sigma, v^{-1})(u) =
                  \begin{cases}
                      \Sigma(u)     & \text{if $u \in \Lk(v)$,} \\
                      \Sigma(v) - 1 & \text{if $u = v$.}
                  \end{cases}
              \]
              In all other cases, we define the transition $\delta(\Sigma, v^{\pm 1})$ to be $q_\mathrm{fail}$.
    \end{itemize}
\end{thm}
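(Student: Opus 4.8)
\section*{Proof proposal}

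The plan is to show that the language accepted by $\mathcal{D}$ is exactly the geodesic language of $\NGP(\Gamma)$. Since the reject state is a sink (every transition out of it returns to it), $\mathcal{D}$ accepts a word $x \in S^*$ precisely when $\delta^*(q_0, x)$ is a powered clique, so the task is to prove that this happens if and only if $x$ is a geodesic. I would establish this by a single induction on $\abs{x}$ that simultaneously proves: (A) if $\mathcal{D}$ accepts $x$, then $x$ is a geodesic and, writing $\Sigma = \delta^*(q_0,x)$, the powered clique $\Sigma$ encodes the trailing-syllable data of $x$ --- for each $v \in V\Gamma$, the maximal set of $v^{\pm1}$-letters of $x$ that can be shuffled to the end of $x$ using the commuting relations of $\NGP(\Gamma)$ consists of exactly $\abs{\Sigma(v)}$ letters, all equal to $v^{\operatorname{sgn}\Sigma(v)}$ (so this set is empty exactly when $v \notin \supp\Sigma$); and (B) if $\mathcal{D}$ rejects $x$, then $x$ is not a geodesic. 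Two elementary facts feed into this: a prefix of a geodesic is a geodesic, and appending a letter to a non-geodesic produces a non-geodesic.

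For the inductive step write $x = x's$ with $s = v^{\pm1}$ and let $\Sigma' = \delta^*(q_0, x')$. If $\mathcal{D}$ rejects $x'$, induction gives $x'$ non-geodesic, hence $x$ non-geodesic. If $\mathcal{D}$ accepts $x'$ but $\delta(\Sigma', s)$ is the reject state, then inspecting the definition of $\delta$ --- and here the bounds in the definition of a powered clique, together with the case $N(v) = 2$ where $v = v^{-1}$ and the asymmetry between $v$ and $v^{-1}$, must be checked --- the failure forces $\abs{v^{\Sigma'(v)}s}_{S_v} \le \abs{v^{\Sigma'(v)}}_{S_v}$ inside the vertex group $G_v$; using (A) for $x'$ I shuffle the $v$-trailing letters of $x'$ into a suffix $v^{\Sigma'(v)}$, replace $v^{\Sigma'(v)}s$ by a no-longer word over $S_v$, and conclude $x$ is not a geodesic. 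If instead $\mathcal{D}$ accepts $x's$ (which forces $\mathcal{D}$ to accept the prefix $x'$), then by induction $x'$ is a geodesic with trailing data $\Sigma'$; the transition being defined forces $\abs{\Sigma(v)} = \abs{\Sigma'(v)} + 1 \le \lfloor N(v)/2 \rfloor$ with $s$ of sign $\operatorname{sgn}\Sigma(v)$, so that appending $s$ to the $v$-tail $v^{\Sigma'(v)}$ of $x'$ \emph{strictly increases} its $S_v$-length. By the graph-product geodesic lemma below this already yields that $x$ is a geodesic, and re-examining how the transition updates the support (to $(\supp\Sigma' \cap \Lk(v)) \cup \{v\}$) and carries powers over on $\Lk(v)$ --- using that appending a letter at the right end can only destroy, never create, the freeness of an earlier letter --- verifies that $\Sigma$ encodes the trailing data of $x$. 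The base case $x = \varepsilon$ is immediate, and the induction gives the theorem.

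The main obstacle is the lemma invoked above: if $w$ is a geodesic of $\NGP(\Gamma)$ and $s = v^{\pm1}$ is a letter such that appending $s$ to the $v$-tail of $w$ strictly increases its length inside $G_v$, then $ws$ is a geodesic (and conversely, if the length does not strictly increase, $ws$ is not a geodesic). The converse is the shuffle-and-shorten argument already used, and one also needs the easy observation that the $v$-tail of a geodesic is always a uniform block $v^k$ with $\abs{k} \le \lfloor N(v)/2 \rfloor$ (otherwise shuffling the tail together would shorten $w$). The forward direction is the real content: it asserts that the only obstruction to geodesity introduced by a single extra letter is cancellation or overfilling within one vertex group. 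I would obtain it either by appealing to the geodesic theory for graph products of \cite{Loe02} --- whose product-automaton construction $\mathcal{D}$ essentially re-expresses, the powered cliques corresponding to the compatible tuples of states of the per-vertex automata once the fail states are merged --- or by a self-contained argument: put $w$ into a fully shuffled-and-combined normal form, locate the unique $G_v$-syllable that can be brought next to $s$, and show that any shorter word for $\operatorname{ev}(ws)$ must shorten that syllable. A secondary, purely bookkeeping nuisance is keeping the $N(v) = 2$ edge cases straight throughout, which is exactly the discrepancy flagged in the remark following the definition of geodesic languages.
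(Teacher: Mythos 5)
Your proposal is correct in substance, but it is organized quite differently from the paper's proof, so a comparison is worthwhile. The paper follows \cite{Loe02}: it builds, for each vertex $v_i$, a small automaton $\mathcal{D}_i$ accepting the geodesics of the cyclic group $G_i$, enlarges it to an automaton $\widetilde{\mathcal{D}}_i$ over the whole alphabet $S$ (self-loops for commuting letters, reset-to-identity for non-commuting ones), asserts that $\widetilde{\mathcal{D}}_i$ rejects $x$ exactly when some shuffle of $x$ produces a non-geodesic $S_i$-block, takes the product of the $\widetilde{\mathcal{D}}_i$, and invokes Lemma \ref{lemma: word problem} to conclude; the powered cliques then appear only at the end, as labels for the compatible tuples of per-vertex states. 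You instead run a single induction on $\abs{x}$ directly on the powered-clique automaton, with the explicit invariant that $\delta^*(q_0,x)$ records, vertex by vertex, the maximal trailing syllable that can be shuffled to the end of $x$. These are the same argument seen from two angles: your invariant (A)/(B) is precisely the unproved ``Observe that $\widetilde{\mathcal{D}}_i$ rejects a word $x$ if and only if\dots'' step of the paper, carried out simultaneously for all vertices, and your key lemma (appending $s=v^{\pm1}$ preserves geodesity iff it geodesically extends the $v$-tail) is exactly what Lemma \ref{lemma: word problem} delivers once one notes that any newly created bad block must be a block of consecutive $v$-letters ending at $s$, hence a sub-block of (tail)$\cdot s$. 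What the paper's route buys is modularity and generality --- the product construction works verbatim for any graph product whose vertex groups have regular geodesic languages --- at the price of leaving the per-vertex acceptance criterion as an observation; what your route buys is an explicit, checkable invariant and a self-contained verification of that criterion, at the price of a longer case analysis and of still resting, exactly as the paper does, on the shuffle characterization of graph-product geodesics from \cite{Loe02} (your ``forward direction''), which neither you nor the paper proves from scratch. The one place to be careful in your write-up is the $N(v)=2$ degeneracy, where $v$ and $v^{-1}$ coincide as letters of $S$ and the power is confined to $\{0,1\}$; your invariant and case analysis accommodate this, but the sign bookkeeping in (A) should be stated so as not to presuppose two distinct letters.
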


To prove Theorem \ref{theorem: fsa}, we will need the following fact from \cite{Loe02} about the geodesic language of graph products. A rigorous proof of this fact involves developing a \textit{normal form} for graph products, see \cite{Gre90} for the construction of such a normal form.

\begin{lem}
    \label{lemma: word problem}
    Let $G$ be a graph product of the groups $(G_v, S_v)$, and let $S = \bigcup_{v \in V} S_v$. Given a word $s_1 \cdots s_n \in S^*$, we define the following transformation, called \textit{shuffling}:
    \begin{enumerate}
        \item[(S)] Interchange two consecutive letters $s_i \in S_u$ and $s_{i + 1} \in S_v$ if $\{u, v\}$ is an edge in $\Gamma$.
    \end{enumerate}
    Then a word $x \in S^*$ is \textit{not} a geodesic of $(G, S)$ if and only if there is a way to shuffle the letters of $x$ to form a subword in $S_v^*$ that is \textit{not} a geodesic of $(G_v, S_v)$.
\end{lem}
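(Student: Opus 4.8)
The plan is to obtain the lemma from Green's normal form theorem for graph products \cite{Gre90}, which I will treat as the one external ingredient. Recall its content: calling an expression $h_1 h_2 \cdots h_m$, with $h_i \in G_{v_i} \setminus \{1\}$, \emph{reduced} when no pair $i < j$ has $v_i = v_j$ and every intermediate vertex $v_\ell$ (for $i < \ell < j$) adjacent to $v_i$ in $\Gamma$, Green's theorem says that every $g \in G$ has a reduced expression, that a reduced expression equals $1$ only when it is empty, and that any two reduced expressions for the same element differ by a sequence of transpositions of adjacent syllables from adjacent (hence commuting) vertex groups. In particular each $G_v$ embeds in $G$, and for $h \in G$ the quantity $\|h\| := \sum_i d_{(G_{v_i}, S_{v_i})}(h_i)$ — the total word length of the syllables of a reduced expression $h_1 \cdots h_m$ of $h$ — is a well-defined invariant.

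For the ``if'' direction I would argue directly: if $x$ can be shuffled to $\alpha\, w\, \beta$ with $w \in S_v^*$ a contiguous subword that is not a geodesic of $(G_v, S_v)$, then, since each shuffle transposes two adjacent letters from commuting vertex groups, shuffling preserves both $\mathrm{ev}_{(G,S)}$ and word length; replacing $w$ by a geodesic of $(G_v, S_v)$ with the same image in $G_v$ — legitimate in $G$ because the relators of $G_v$ hold in $G$ — yields a strictly shorter word evaluating to $\mathrm{ev}_{(G,S)}(x)$, so $x$ is not a geodesic.

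The substantive direction is the converse, and the plan is a ``reduce to normal form'' descent. For any $w \in S^*$, decompose $w = b_1 \cdots b_r$ into maximal blocks of letters from a single vertex group, and apply repeatedly three moves: delete a block whose image in its vertex group is trivial; replace a block by a shorter word of that vertex group with the same image; and, when blocks $b_i, b_j$ ($i<j$) from a common group $G_v$ are separated only by blocks from groups adjacent to $v$ (so every intermediate letter commutes with $S_v$), shuffle $b_i$ next to $b_j$ and merge the two into one block. The first two moves strictly shorten the word; the third preserves length, strictly decreases the block count, and leaves the word a shuffle of $w$. This process terminates (a standard consequence of Green's framework) at a word $\tilde w$ all of whose blocks are nontrivial, geodesic in their vertex groups, and unmergeable — that is, whose block images form a reduced expression — so that $|\tilde w| = \|\mathrm{ev}_{(G,S)}(w)\|$ by the uniqueness clause of Green's theorem. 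Since the moves never lengthen $w$, this gives $|w| \ge \|\mathrm{ev}_{(G,S)}(w)\|$ for every $w$, hence $d_{(G,S)}(h) = \|h\|$ for every $h \in G$ (equality being realized by concatenating geodesic representatives of the syllables of a reduced expression). Now suppose $x$ is not a geodesic but, toward a contradiction, that no shuffle of $x$ contains a non-geodesic contiguous $S_v$-subword for any $v$. Then in the descent applied to $x$ neither of the first two (shortening) moves is ever available: a trivial block, or a block non-geodesic in its vertex group, would be exactly such a forbidden subword, and the merged block of the third move is likewise a contiguous $S_v$-subword of a shuffle of $x$, hence geodesic and nontrivial. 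So the descent preserves length throughout, giving $|x| = \|\mathrm{ev}_{(G,S)}(x)\| = d_{(G,S)}(\mathrm{ev}_{(G,S)}(x))$ — i.e. $x$ \emph{is} a geodesic, a contradiction. Hence some shuffle of $x$ exposes a non-geodesic $S_v$-subword.

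I expect the main obstacle to be the precise verification, inside Green's machinery, that this reduction terminates and that the uniqueness clause genuinely pins $|\tilde w|$ to $\|\mathrm{ev}_{(G,S)}(w)\|$; that part is routine in graph-product theory but is exactly where \cite{Gre90} is needed. The only conceptual point, used repeatedly above, is that every length-decreasing step of normalization is witnessed by a non-geodesic (possibly trivial) contiguous block in some shuffle of the word, so a word having no such block in any of its shuffles cannot be shortened at all.
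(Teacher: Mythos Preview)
The paper does not actually prove this lemma: it is stated as a fact imported from \cite{Loe02}, with the remark that ``a rigorous proof of this fact involves developing a normal form for graph products, see \cite{Gre90}.'' Your proposal therefore goes well beyond the paper by supplying exactly the argument the paper only gestures at, and it does so correctly. The ``if'' direction is immediate, and your ``only if'' direction---running the block-reduction process and observing that every length-decreasing step (deleting a trivial block or shortening a non-geodesic block, including a freshly merged one) is witnessed by a non-geodesic contiguous $S_v$-subword of some shuffle of $x$---is the standard and correct way to extract the lemma from Green's normal form.

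One small point worth tightening if you write this out in full: after a merge, the surrounding blocks (e.g.\ $b_{i-1}$ and $b_{i+1}$, now adjacent) may themselves coalesce, so ``maximal block'' should be recomputed after each shuffle. This does not affect the argument---the maximal-block count still strictly drops and the word remains a shuffle of $x$---but it is the sort of bookkeeping your final paragraph rightly flags as needing care. With that caveat, the proof is sound and is precisely the normal-form argument the paper defers to \cite{Gre90}.
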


\begin{proof}[Proof of Theorem \ref{theorem: fsa}]
    We write $V\Gamma = \{v_1, \dots, v_n\}$, $G_i \coloneqq \langle v_i\rangle$, and $S_i = \{v_i, v_i^{-1}\}$. Fix an index $i$, and let $m \coloneqq \lfloor N(v_i) / 2\rfloor$. The geodesic language of the cyclic group $G_i$ is the set of words
    \[
        \mathcal{L}_i
        = \{v_i^{-m}, \dots, v_i^{-1}, \varepsilon, v_i, \dots, v_i^m\}
    \]
    We construct a FSA $\mathcal{D}_i$ that accepts $\mathcal{L}_i$ as follows. The accept states of $\mathcal{D}_i$ are the group elements $\{v_i^{-m}, \dots, v_i^m\} \subseteq G_i$, there is a single reject state $q_\mathrm{rej}$, and the start state is the identity. For each $0 \leq k < m$, there is a $v_i$-edge from $v_i^k$ to $v_i^{k + 1}$ and a $v_i^{-1}$-edge from $v_i^{-k}$ to $v_i^{-k - 1}$. Clearly, $\mathcal{D}_i$ accepts the geodesic language of $G_i$.

    Next, we modify the FSA $\mathcal{D}_i$ to read words over the alphabet $S$. The modified FSA, which we call $\widetilde{\mathcal{D}}_i$, is the same as $\mathcal{D}_i$, but has the following additional transitions:
    \begin{itemize}
        \item If $\{v_i, v_j\}$ is an edge in $\Gamma$, add a $v_j$-transition and a $v_j^{-1}$-transition from $v_i^k$ to itself for every state $v_i^k$.
        \item If $\{v_i, v_j\}$ is not an edge in $\Gamma$, add a $v_j$-transition and a $v_j^{-1}$-transition from $v_i^k$ to $1$ for every state $v_i^k$.
    \end{itemize}
    The FSA $\widetilde{\mathcal{D}}_i$ pictured in Figure \ref{fig:FSAmodified}. Observe that $\widetilde{\mathcal{D}}_i$ rejects a word $x \in S^*$ if and only if there is a way to shuffle the letters of $x$ (as defined by Lemma \ref{lemma: word problem}) to form a subword in $S_i^*$ that is not a geodesic of $(G_i, S_i)$.

    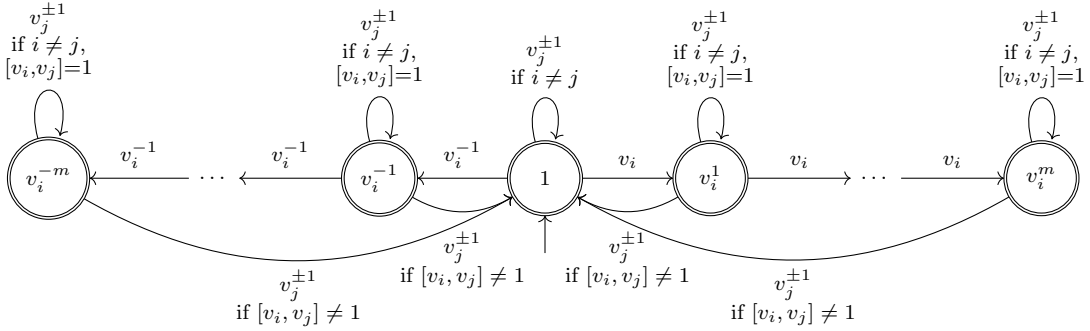
\begin{figure}
        \centering
        \begin{tikzpicture}
            \begin{scope}[font={\scriptsize}]
                \node[state, accepting] (nD) at (-6.6, 0) {$v_i^{-m}$};
                \node (ndots) at (-4.4, 0) {\dots};
                \node[state, accepting] (n1) at (-2.2, 0) {$v_i^{-1}$};
                \node[state, accepting, initial below, initial text={}] (start) at (0, 0) {$1$};
                \node[state, accepting] (p1) at (2.2, 0) {$v_i^1$};
                \node (dots) at (4.4, 0) {\dots};
                \node[state, accepting] (pD) at (6.6, 0) {$v_i^m$};

                \path[->] (ndots) edge node[above] {$v_i^{-1}$} (nD)
                (n1)    edge node[above] {$v_i^{-1}$} (ndots)
                (start) edge node[above] {$v_i^{-1}$} (n1)
                (start) edge node[above] {$v_i$}      (p1)
                (p1)    edge node[above] {$v_i$}      (dots)
                (dots)  edge node[above] {$v_i$}      (pD);
            \end{scope}

            \path[->] (nD)    edge[loop above]
            node[above]
            {$\substack{
                v_j^{\pm 1} \\
                \text{if $i \neq j$,}\\
                [v_i, v_j] = 1               
                }$} (nD)
            (n1)    edge[loop above]
            node[above]
            {$\substack{
                v_j^{\pm 1} \\
                \text{if $i \neq j$,}\\
                [v_i,v_j] = 1
                }$} (n1)
            (start) edge[loop above]
            node[above]
            {$\substack{
                v_j^{\pm 1} \\
                \text{if $i \neq j$}
                }$} (start)
            (p1)    edge[loop above]
            node[above]
            {$\substack{
                v_j^{\pm 1} \\
                \text{if $i \neq j$,}\\
                [v_i,v_j] = 1
                }$} (p1)
            (pD)    edge[loop above]
            node[above]
            {$\substack{
                v_j^{\pm 1} \\
                \text{if $i \neq j$,}\\
                [v_i, v_j] = 1
                }$} (pD);

            \path[->] (nD) edge[bend right]
            node[below]
            {$\substack{
                v_j^{\pm 1} \\
                \text{if $[v_i, v_j] \neq 1$}
                }$} (start)
            (n1) edge[bend right]
            node[below, yshift=-0.15cm]
            {$\substack{
                v_j^{\pm 1} \\
                \text{if $[v_i, v_j] \neq 1$}
                }$} (start)
            (p1) edge[bend left]
            node[below, yshift=-0.15cm]
            {$\substack{
                v_j^{\pm 1} \\
                \text{if $[v_i, v_j] \neq 1$}
                }$} (start)
            (pD) edge[bend left]
            node[below]
            {$\substack{
                v_j^{\pm 1} \\
                \text{if $[v_i, v_j] \neq 1$}
                }$} (start);
        \end{tikzpicture}
        \caption{The FSA $\widetilde{\mathcal{D}}_i$ when $N(v_i) \neq 2$.}
        \label{fig:FSAmodified}
    \end{figure}

    Next, we construct the \textit{product} $\mathcal{D}$ of the modified FSA's $\widetilde{\mathcal{D}}_i$. The accept states of $\mathcal{D}$ are tuples $(v_1^{k_1}, \dots, v_n^{k_n})$, where each $v_i^{k_i}$ is a state of $\widetilde{\mathcal{D}}_i$. There is a single reject state $q_\mathrm{rej}$, and the tuple $(1, \dots, 1)$ is designated as the start state. Given a generator $s \in S$, there is an $s$-edge from $(v_1^{k_1}, \dots, v_n^{k_n})$ to $(v_1^{\ell_1}, \dots, v_n^{\ell_n})$ if there is an $s$-edge from $v_i^{k_i}$ to $v_i^{\ell_i}$ in each $\widetilde{\mathcal{D}}_i$. If there is an $s$-edge from $v_i^{k_i}$ to $q_\mathrm{rej}$ in any $\widetilde{\mathcal{D}}_i$, then there is an $s$-edge from $(v_1^{k_1}, \dots, v_n^{k_n})$ to $q_\mathrm{rej}$ in $\mathcal{D}$. By construction, $\mathcal{D}$ rejects a word in $S^*$ if and only if the word is rejected by some individual FSA $\widetilde{\mathcal{D}}_i$. Therefore, $\mathcal{D}$ rejects a word in $x \in S^*$ if and only if there is a way to shuffle the letters of $x$ to form a subword in any $S_i^*$ that is not a geodesic of $(G_i, S_i)$. By Lemma \ref{lemma: word problem}, $\mathcal{D}$ accepts the geodesic language of $(G, S)$.

    We claim that the FSA $\mathcal{D}$ is exactly the one described by the theorem statement. Observe that the states of $\mathcal{D}$ are in bijection with powered cliques, where to each state $(v_1^{k_1}, \dots, v_n^{k_n})$ we associate the powered clique $v_i \mapsto k_i$, and to each powered clique $\Sigma$ we associate the state $(v_1^{\Sigma(v_1)}, \dots, v_n^{\Sigma(v_n)})$. Thus, we may view the states of $\mathcal{D}$ as powered cliques.

    It remains to show that the transition map of $\mathcal{D}$ is given by $\delta$. Let $\Sigma$ be a powered clique, which corresponds to the state $(v_1^{\Sigma(v_1)}, \dots, v_n^{\Sigma(v_n)})$. Let $v_i \in V\Gamma$. The FSA $\widetilde{\mathcal{D}}_i$ has a $v_i$-transition given by
    \[
        \begin{cases}
            v_i^{\Sigma(v_i)} \to v_i^{\Sigma(v_i) + 1}
             & \text{if $0 \leq \Sigma(v_i) < \lfloor N(v_i) / 2\rfloor$,} \\
            v_i^{\Sigma(v_i)} \to q_\mathrm{rej}
             & \text{otherwise.}
        \end{cases}
    \]
    The other FSA's $\widetilde{\mathcal{D}}_j$ have a $v_i$-transition given by
    \[
        \begin{cases}
            v_j^{\Sigma(v_j)} \to v_j^{\Sigma(v_j)}
             & \text{if $v_j \in \Lk(v_i)$,} \\
            v_j^{\Sigma(v_j)} \to 1
             & \text{otherwise.}
        \end{cases}
    \]
    Therefore, if $0 \leq \Sigma(v_i) < \lfloor N(v_i) / 2\rfloor$, $\mathcal{D}$ has a $v_i$-transition from $\Sigma$ to the powered clique with support $(\supp(\Sigma) \cap \Lk(v_i)) \cup \{v_i\}$ and the same powers as $\Sigma$ except with the power at $v_i$ incremented. Otherwise, $\mathcal{D}$ has a $v_i$-transition from $\Sigma$ to the fail state. In particular, the transitions along $v_i$ are precisely those prescribed by $\delta$. If we instead consider transitions along $v_i^{-1}$, we have a symmetric situation, excluding the case where $N(v_i) = 2$. This completes the proof.
\end{proof}

\section{Link-regular NGPs}
\label{sec: theorem 1}
Now that we have introduced the necessary background, we are prepared to explore the geodesic growth of NGPs. In particular, we will focus on NGPs associated to numbered graphs that obey a strong regularity condition called \textit{link-regularity}. Our main result in this section will show that NGPs associated to link-regular numbered graphs which are \textit{equivalent} (in a certain sense, which we will define) have the same geodesic growth series. This result then allows us to provide examples of NGPs associated to non-isomorphic graphs that have the same geodesic growth series.

We begin by defining \textit{link-regular} numbered graphs and the \textit{equivalence} of such graphs. Note that when the vertex numbers $N(v)$ are all $2$, our definition of link-regularity (Definition \ref{def: link regularity}) is equivalent to Definition 3.1 in \cite{Ant12}. We will discuss this case at length in Section \ref{sec: theorem 2}.

\begin{defi}[Multisets]
    A \textit{multiset} $A$ consists of an underlying set $\supp(A)$ (called the \textit{support} of $A$) and a map $m_A : \supp(A) \to \N$ (where for each $a \in \supp(A)$, $m_A(a)$ is called the \textit{multiplicity} of $a$). A multiset $B$ is a \textit{subset} of $A$, written $B \subseteq A$, if $\supp(B) \subseteq \supp(A)$ and $m_B(b) \leq m_A(b)$ for all $b \in \supp(B)$.

    The \textit{sum} of two multisets $A$ and $B$ is the multiset $A + B$ where $\supp(A+B) = A \cup B$ and $m_{A+B} = m_A + m_B$. The \textit{difference} of two multisets $A$ and $B$ is the multiset $A - B$ where $\supp(A-B) = \{a \in A \mid m_A(a) > m_B(a)\}$ and $m_{A-B} = m_A - m_B$.
\end{defi}

\begin{rem}
    Informally, we will write multisets as sets with repeated elements, e.g., $\{a, a, b, c, c, c\}$. Similarly, we will often define multisets with set-builder notation, e.g., $\{\lfloor n / 2 \rfloor \mid n \in \N\}$ is the multi-set $\{0, 1, 1, 2, 2, 3, 3, \dots\}$.
\end{rem}

\begin{defi}[Link-regularity]
    \label{def: link regularity}
    Let $(\Gamma, N)$ be a numbered graph. If $U \subseteq V\Gamma$, we let $N(U)$ denote the multiset $\{N(v) \mid v \in U\}$. We say $\Gamma$ is \textit{link-regular} if, whenever $N(\sigma_1) = N(\sigma_2)$ for cliques $\sigma_1,\sigma_2$, we have $N(\Lk(\sigma_1)) = N(\Lk(\sigma_2))$.
\end{defi}

\begin{exa}
    Let $(\Gamma, N)$ be the numbered graph pictured below:
    \begin{center}
        \begin{tikzpicture}
            \foreach \th/\lab in {1/3, 2/7, 3/3, 4/7, 5/3, 6/7} {
                    \node[label={\th * 360 / 6:$\lab$}] (v\th) at (\th * 360 / 6:1.5) {};
                    \filldraw (v\th) circle (2pt);
                }
            \draw (v1.center) -- (v2.center) -- (v3.center) -- (v4.center) -- (v5.center) -- (v6.center) -- cycle;
            \draw (v1.center) -- (v4.center);
            \draw (v2.center) -- (v5.center);
            \draw (v3.center) -- (v6.center);
        \end{tikzpicture}
    \end{center}
    For cliques $\sigma$ where $N(\sigma_1) = \{3\}$, we have $N(\Lk(\sigma_1))= \{7,7,7\}$. Similarly, for cliques $\sigma$ with $N(\sigma) = \{7\}$, we have $N(\Lk(\sigma))= \{3,3,3\}$. The link of all $2$-cliques is empty, and there are no cliques of size greater than $2$. Therefore, $(\Gamma, N)$ is link-regular.
\end{exa}

\begin{defi}[Equivalence of link-regular graphs]
\label{def: graph equivalence}
    We say two link-regular numbered graphs $(\Gamma, N)$ and $(\Gamma', N')$ are \textit{equivalent} if:
    \begin{enumerate}
        \item[(i)] $N(V\Gamma) = N'(V\Gamma')$.
        \item[(ii)] Whenever cliques $\sigma \subseteq V\Gamma$ and $\sigma' \subseteq V\Gamma'$ satisfy $N(\sigma) = N'(\sigma')$, we have $N(\Lk(\sigma)) = N'(\Lk(\sigma'))$.
    \end{enumerate}
    Note that condition (ii) is equivalent to requiring the disjoint union $(\Gamma \sqcup \Gamma', N \sqcup N')$ (where $(N \sqcup N')(v) = N(v)$ for $v \in V\Gamma$ and $(N \sqcup N')(v) = N'(v)$ for $v \in V\Gamma'$) to be link-regular.
\end{defi}

The equivalence of link-regular numbered graphs is less strict than graph isomorphism, as we showcase in the following example.

\begin{exa}
    \label{example: equivalence}
    Let $(\Gamma, N)$ be the following link-regular numbered graph of two disjoint squares, and let $(\Gamma', N')$ be the following link-regular numbered graph of an octagon:
    \begin{center}
        \begin{tikzpicture}
            \foreach \th/\lab in {1/4, 2/6, 3/4, 4/6} {
                    \node[label={[label distance=-0.1cm]\th * 360 / 4 + 360 / 8:$\lab$}] (v\th) at (\th * 360 / 4 + 360 / 8:1) {};
                    \filldraw (v\th) circle (2pt);
                }
            \draw (v1.center) -- (v2.center) -- (v3.center) -- (v4.center) -- cycle;

            \foreach \th/\lab in {1/4, 2/6, 3/4, 4/6} {
                    \node[label={[label distance=-0.1cm]\th * 360 / 4 + 360 / 8:$\lab$}] (v\th) at ($(3, 0) + (\th * 360 / 4 + 360 / 8:1)$) {};
                    \filldraw (v\th) circle (2pt);
                }
            \draw (v1.center) -- (v2.center) -- (v3.center) -- (v4.center) -- cycle;

            \foreach \th/\lab in {1/4, 2/6, 3/4, 4/6, 5/4, 6/6, 7/4, 8/6} {
                    \node[label={[label distance=-0.1cm]\th * 360 / 8 + 360 / 16:$\lab$}] (v\th) at ($(7.5, 0) + (\th * 360 / 8 + 360 / 16:1.5)$) {};
                    \filldraw (v\th) circle (2pt);
                }
            \draw (v1.center) -- (v2.center) -- (v3.center) -- (v4.center) -- (v5.center) -- (v6.center) -- (v7.center) -- (v8.center) -- cycle;
        \end{tikzpicture}
    \end{center}
    Evidently, these graphs are not isomorphic. However, the disjoint union of the graphs $(\Gamma, N)$ and $(\Gamma', N')$ is link-regular, since whenever $N(\sigma) = N'(\sigma') = \{4\}$, we have $N(\Lk(\sigma)) = N'(\Lk(\sigma')) = \{6,6\}$; whenever $N(\sigma) = N'(\sigma') = \{6\}$, we have $N(\Lk(\sigma)) = N'(\Lk(\sigma')) = \{4,4\}$; and whenever $N(\sigma) = N'(\sigma') = \{4, 6\}$, we have $N(\Lk(\sigma)) = N'(\Lk(\sigma')) = \emptyset$. Since we also have $N(V\Gamma) = \{4,4,4,4,6,6,6,6\} = N'(V\Gamma')$, $(\Gamma, N)$ and $(\Gamma', N')$ are equivalent link-regular numbered graphs.
\end{exa}

Our goal is to prove that the NGPs associated to equivalent link-regular numbered graphs $(\Gamma, N)$ and $(\Gamma', N')$ have the same geodesic growth series. To do this, we will study the FSAs $\mathcal{D}$ and $\mathcal{D}'$ (which were constructed in Theorem \ref{theorem: fsa}) that recognize the geodesic languages of $\NGP(\Gamma)$ and $\NGP(\Gamma')$, respectively. Because the transitions of $\mathcal{D}$ and $\mathcal{D}'$ are defined in terms of cliques and links, we will need several results comparing the cliques and links of $\Gamma$ and $\Gamma'$.

\begin{defi}
    Given a clique $\sigma$ of a simplicial graph $\Gamma$ and a subclique $\tau \subseteq \sigma$, we define
    \[
        \Lk(\sigma; \tau)
        = \{v \in V\Gamma \setminus \sigma \mid \Lk(v) \cap \sigma = \tau\}.
    \]
\end{defi}

\begin{lem}
    \label{lemma: strengthening link regularity}
    Let $(\Gamma, N)$ be a link-regular numbered graph. Suppose $\sigma$ and $\sigma'$ are cliques of $\Gamma$ such that $N(\sigma) = N(\sigma')$. Then for any $\tau \subseteq \sigma$ and $\tau' \subseteq \sigma'$ such that $N(\tau) = N'(\tau')$, we have $N(\Lk(\sigma ; \tau)) = N(\Lk(\sigma'; \tau'))$.
\end{lem}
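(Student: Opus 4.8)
The plan is to compute $N(\Lk(\sigma;\tau))$ by a multiset inclusion--exclusion that expresses it purely in terms of the data $N(\Lk(\rho))$, $N(\rho)$, $N(\sigma)$ for subcliques $\tau\subseteq\rho\subseteq\sigma$, and then to reduce the statement to link-regularity of $\Gamma$ together with the equalities $N(\sigma)=N(\sigma')$ and $N(\tau)=N(\tau')$. Throughout, write $[X]\colon\N\to\Z$ for the multiplicity function of a multiset $X$ (so $[X+Y]=[X]+[Y]$, and $[X-Y]=[X]-[Y]$ when $Y\subseteq X$). For a subclique $\rho$ with $\tau\subseteq\rho\subseteq\sigma$ set $A_\rho=\{v\in V\Gamma\setminus\sigma\mid\rho\subseteq\Lk(v)\}$; since $v\notin\sigma$ forces $v\notin\rho$, this is exactly $\Lk(\rho)\setminus\sigma$. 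Directly from the definition of $\Lk(\sigma;\tau)$, a vertex $v\notin\sigma$ lies in $\Lk(\sigma;\tau)$ iff $v\in A_\tau$ but $v\notin A_{\tau\cup\{w\}}$ for every $w\in\sigma\setminus\tau$, and $\bigcap_{w\in T}A_{\tau\cup\{w\}}=A_{\tau\cup T}$ for $T\subseteq\sigma\setminus\tau$. Running ordinary inclusion--exclusion on the finitely many vertices of each fixed $N$-value separately gives the identity of multiplicity functions
\[
    \bigl[N(\Lk(\sigma;\tau))\bigr]=\sum_{\tau\subseteq\rho\subseteq\sigma}(-1)^{|\rho|-|\tau|}\bigl[N(A_\rho)\bigr].
\]

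Next I would simplify each $[N(A_\rho)]$. Since $\rho\subseteq\sigma$ and $\sigma$ is a clique, every vertex of $\sigma\setminus\rho$ is adjacent to all of $\rho$, so $\Lk(\rho)\cap\sigma=\sigma\setminus\rho$ and $A_\rho=\Lk(\rho)\setminus(\sigma\setminus\rho)$; hence $[N(A_\rho)]=[N(\Lk(\rho))]-[N(\sigma)]+[N(\rho)]$. By link-regularity (Definition \ref{def: link regularity}), $[N(\Lk(\rho))]$ depends only on the multiset $N(\rho)$. To conclude, observe that $N(\sigma\setminus\tau)=N(\sigma)-N(\tau)=N(\sigma')-N(\tau')=N(\sigma'\setminus\tau')$, so there is a bijection $f\colon\sigma\setminus\tau\to\sigma'\setminus\tau'$ with $N(f(v))=N(v)$ for every $v$. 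Then $\rho\mapsto\tau'\cup f(\rho\setminus\tau)$ is a bijection from the subcliques $\rho$ with $\tau\subseteq\rho\subseteq\sigma$ onto the subcliques $\rho'$ with $\tau'\subseteq\rho'\subseteq\sigma'$, and it preserves both $|\rho|-|\tau|$ and $N(\rho)$. Combining the displayed formulas with $[N(\sigma)]=[N(\sigma')]$ and the fact that $[N(\Lk(\rho))]$ is a function of $N(\rho)$, the alternating sum above for $(\sigma',\tau')$ matches the one for $(\sigma,\tau)$ term by term, so $N(\Lk(\sigma;\tau))=N(\Lk(\sigma';\tau'))$.

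The step I expect to need the most care is the first displayed identity: making multiset inclusion--exclusion rigorous — it is cleanest to argue one $N$-value at a time, so it reduces to the standard finite statement — and checking the degenerate cases, namely $\tau=\emptyset$ (where $A_\emptyset=V\Gamma\setminus\sigma$ and $\Lk(\emptyset)=V\Gamma$) and $\tau=\sigma$ (where the sum collapses to a single term and the claim is immediate from link-regularity applied to $N(\sigma)=N(\sigma')$). Everything else is bookkeeping with multiplicity functions.
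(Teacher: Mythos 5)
Your proof is correct, and it takes a genuinely different route from the paper's. The paper first proves the recursive partition $\Lk(\tau) = \Lk(\sigma;\tau) \sqcup (\sigma\setminus\tau) \sqcup \bigsqcup_{\tau\subsetneq\pi\subseteq\sigma}\Lk(\sigma;\pi)$ and then argues by downward induction on $|\tau|$, using a number-preserving bijection $\sigma\to\sigma'$ to match the terms $\Lk(\sigma;\pi)$ appearing in the recursion with terms $\Lk(\sigma';\pi')$ covered by the inductive hypothesis. You instead invert that relationship once and for all: your inclusion--exclusion over the intermediate cliques $\tau\subseteq\rho\subseteq\sigma$ yields the closed formula
\[
\bigl[N(\Lk(\sigma;\tau))\bigr]=\sum_{\tau\subseteq\rho\subseteq\sigma}(-1)^{|\rho|-|\tau|}\bigl(\bigl[N(\Lk(\rho))\bigr]-\bigl[N(\sigma)\bigr]+\bigl[N(\rho)\bigr]\bigr),
\]
whose right-hand side visibly depends only on $N(\sigma)$ and the multisets $N(\rho)$ (via link-regularity), so the conclusion follows from the same number-preserving bijection of intermediate cliques with no induction. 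All the steps check out: $A_\rho=\Lk(\rho)\setminus\sigma$, the identity $\bigcap_{w\in T}A_{\tau\cup\{w\}}=A_{\tau\cup T}$, and $\Lk(\rho)\cap\sigma=\sigma\setminus\rho$ (which needs $\sigma$ to be a clique, as you use). What your approach buys is an explicit formula for $N(\Lk(\sigma;\tau))$ in terms of link-regularity data alone, which is stronger than the bare equality and could be reused elsewhere; the cost is that you must work with $\Z$-valued multiplicity functions, since the individual signed terms need not correspond to honest sub-multisets, whereas the paper's recursion only ever subtracts genuine subsets and so stays within the multiset difference it defined. Your flagged degenerate cases ($\tau=\emptyset$ and $\tau=\sigma$) are handled correctly by the general formula, so no separate treatment is really needed.
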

\begin{proof}
    First, we show that if $\sigma$ is a clique of $\Gamma$ and $\tau \subseteq \sigma$ is a subclique, then
    \begin{equation}
        \label{eq: link sigma tau}
        \Lk(\sigma ; \tau) = \Lk(\tau) \setminus \left((\sigma \setminus \tau) \sqcup \bigsqcup_{\tau \subsetneq \pi \subseteq \sigma} \Lk(\sigma; \pi)\right).
    \end{equation}
    If $v \in \Lk(\sigma ; \tau)$, we have by definition that $v \not\in \sigma$ and that $\Lk(v) \cap \sigma = \tau$. This implies $v \in \Lk(\tau)$, as well as $v \not\in \Lk(\sigma; \pi)$ for all $\pi \supsetneq \tau$. Hence, $v$ lies in the right-hand side of (\ref{eq: link sigma tau}). Conversely, suppose $v$ lies in the right-hand side of (\ref{eq: link sigma tau}). The condition that $v \in \Lk(\tau)$ implies $v \not\in \tau$ and $\tau \subseteq \Lk(v) \cap \sigma$. The exclusion of $v$ from $\sigma \setminus \tau$ further implies $v \not\in \sigma$. Finally, the exclusion of $v$ from the union $\bigsqcup_{\tau \subsetneq \pi \subseteq \sigma} \Lk_\pi(\sigma)$ means that $\Lk(v) \cap \sigma$ cannot contain $\pi$ for any $\pi \supsetneq \tau$, and so $\Lk(v) \cap \sigma = \tau$. Hence, $v \in \Lk(\sigma ; \tau)$.

    Now, let $\sigma, \sigma', \tau, \tau'$ be as in the statement of the lemma. Our proof will proceed by induction on the size of the subcliques $\tau$ and $\tau'$ (which must be of the same size since $N(\tau) = N(\tau')$). First, consider the base case when $|\tau| = |\sigma|$ and $|\tau'| = |\sigma'|$. Then $\tau = \sigma$ and $\tau' = \sigma'$, which implies $\Lk(\sigma; \tau) = \Lk(\sigma)$ and $\Lk(\sigma'; \tau') = \Lk(\sigma')$. By the link-regularity of $(\Gamma, N)$, we have
    \[
        N(\Lk(\sigma; \tau))
        = N(\Lk(\sigma))
        = N(\Lk(\sigma'))
        = N(\Lk(\sigma'; \tau')),
    \]
    as desired.

    Inductively, assume that $N(\Lk(\sigma; \pi)) = N(\Lk(\sigma; \pi'))$ for all subcliques $\pi \subseteq \sigma$ and $\pi' \subseteq \sigma'$ of size strictly larger than $\tau$ and $\tau'$ that satisfy $N(\pi) = N(\pi')$. By (\ref{eq: link sigma tau}), we have the identity
    \begin{align*}
        N(\Lk(\sigma ; \tau)) & = N(\Lk(\tau)) - \left(N(\sigma) - N(\tau) + \sum_{\tau \subsetneq \pi \subseteq \sigma} N(\Lk(\sigma; \pi))\right),
    \end{align*}
    as well as an analagous identity for $N(\Lk(\sigma'; \tau'))$. Because $N(\sigma) = N(\sigma')$ and $N(\tau) = N(\tau')$, there is a bijection $\sigma \rightarrow \sigma'$ that sends $\tau$ to $\tau'$ and preserves vertex numbers. Given a subclique $\tau \subsetneq \pi \subseteq \sigma$, this bijection determines a corresponding subclique $\tau' \subsetneq \pi' \subseteq \sigma'$ such that $N(\pi') = N(\pi)$. By the inductive hypothesis and the link-regularity of $(\Gamma, N)$,
    \begin{align*}
        N(\Lk(\sigma ; \tau))
         & = N(\Lk(\tau)) - \left(N(\sigma) - N(\tau) + \sum_{\tau \subsetneq \pi \subseteq \sigma} N(\Lk(\sigma; \pi))\right)      \\
         & = N(\Lk(\tau')) - \left(N(\sigma') - N(\tau') + \sum_{\tau' \subset \pi' \subseteq \sigma'} N(\Lk(\sigma'; \pi'))\right) \\
         & = N(\Lk(\sigma'; \tau')).
    \end{align*}
    This proves the claim for $\tau$ and $\tau'$, as desired.
\end{proof}

\begin{cor}
    \label{corollary: strengthening link regularity}
    Let $(\Gamma, N)$ and $(\Gamma', N')$ be equivalent link-regular numbered graphs. Suppose $\sigma \subseteq V\Gamma$ and $\sigma' \subseteq V\Gamma'$ are cliques such that $N(\sigma) = N'(\sigma')$. Then for any $\tau \subseteq \sigma$ and $\tau' \subseteq \sigma'$ such that $N(\tau) = N'(\tau')$, we have $N(\Lk(\sigma ; \tau)) = N'(\Lk(\sigma'; \tau'))$.
\end{cor}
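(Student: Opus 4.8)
The plan is to deduce the corollary from Lemma \ref{lemma: strengthening link regularity} applied to the disjoint union $(\Gamma \sqcup \Gamma', N \sqcup N')$. By the remark following Definition \ref{def: graph equivalence}, equivalence condition (ii) says exactly that this disjoint union is link-regular, so the lemma is available for it. Since $\Gamma \sqcup \Gamma'$ has no edges between its two components, a subset of its vertex set is a clique precisely when it lies entirely in $\Gamma$ (and is a clique there) or entirely in $\Gamma'$; in particular $\sigma, \tau$ and $\sigma', \tau'$ are all cliques of the disjoint union, with $\tau \subseteq \sigma$ and $\tau' \subseteq \sigma'$. Also $N \sqcup N'$ restricts to $N$ on $V\Gamma$ and to $N'$ on $V\Gamma'$, so the hypotheses $N(\sigma) = N'(\sigma')$ and $N(\tau) = N'(\tau')$ are exactly what is needed to invoke Lemma \ref{lemma: strengthening link regularity} in $\Gamma \sqcup \Gamma'$. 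This yields the equality, as multisets, of the $(N \sqcup N')$-values of $\Lk_{\Gamma \sqcup \Gamma'}(\sigma; \tau)$ and of $\Lk_{\Gamma \sqcup \Gamma'}(\sigma'; \tau')$.

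The remaining work is to identify these two ``disjoint-union links'' with the links $\Lk_\Gamma(\sigma;\tau)$ and $\Lk_{\Gamma'}(\sigma';\tau')$ appearing in the corollary. I would split into cases on whether $\tau$ (equivalently $\tau'$, since $N(\tau) = N'(\tau')$) is empty. If $\tau \neq \emptyset$, then any vertex $v$ with $\Lk(v) \cap \sigma = \tau$ has a neighbor in $\sigma \subseteq V\Gamma$ and hence lies in $V\Gamma$, where links computed in $\Gamma \sqcup \Gamma'$ and in $\Gamma$ agree; thus $\Lk_{\Gamma \sqcup \Gamma'}(\sigma;\tau) = \Lk_\Gamma(\sigma;\tau)$, and symmetrically for $\sigma', \tau'$, so the displayed multiset equality is exactly the conclusion of the corollary. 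If $\tau = \emptyset$, every vertex of $V\Gamma'$ vacuously satisfies $\Lk(v) \cap \sigma = \emptyset$, so $\Lk_{\Gamma \sqcup \Gamma'}(\sigma;\emptyset) = \Lk_\Gamma(\sigma;\emptyset) \sqcup V\Gamma'$ and likewise $\Lk_{\Gamma \sqcup \Gamma'}(\sigma';\emptyset) = V\Gamma \sqcup \Lk_{\Gamma'}(\sigma';\emptyset)$; the displayed equality then reads $N(\Lk_\Gamma(\sigma;\emptyset)) + N'(V\Gamma') = N(V\Gamma) + N'(\Lk_{\Gamma'}(\sigma';\emptyset))$, and cancelling the equal multisets $N(V\Gamma) = N'(V\Gamma')$ guaranteed by condition (i) of equivalence gives $N(\Lk(\sigma;\tau)) = N'(\Lk(\sigma';\tau'))$.

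The step I expect to require the most care is the empty-subclique case: passing to the disjoint union inflates $\Lk(\sigma;\emptyset)$ by the whole of the other component, so it is essential to use condition (i) of the equivalence (not merely the link-regularity of the disjoint union, i.e. condition (ii)) in order to cancel the surplus. Everything else is a routine transcription of Lemma \ref{lemma: strengthening link regularity} into the disjoint-union setting.
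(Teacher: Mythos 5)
Your proposal is correct and follows the same route as the paper: invoke Lemma \ref{lemma: strengthening link regularity} for the link-regular disjoint union $(\Gamma \sqcup \Gamma', N \sqcup N')$. In fact you are more careful than the paper's own two-line proof, which silently identifies $\Lk_{\Gamma\sqcup\Gamma'}(\sigma;\tau)$ with $\Lk_\Gamma(\sigma;\tau)$; as you observe, these differ exactly when $\tau = \emptyset$ (the disjoint-union link then picks up all of $V\Gamma'$, since every vertex of the other component vacuously satisfies $\Lk(v)\cap\sigma=\emptyset$), and cancelling that surplus genuinely requires condition (i) of Definition \ref{def: graph equivalence} and not just the link-regularity of the union. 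Since the $\tau=\emptyset$ case is actually used later (in the decomposition of $V\Gamma$ in Lemma \ref{lemma: bijection between transitions}), your extra step is a real improvement rather than pedantry.
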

\begin{proof}
    By definition, the disjoint union $(\Gamma \sqcup \Gamma', N \sqcup N')$ is link-regular. Now, $\sigma$ and $\sigma'$ are subcliques of $(\Gamma \sqcup \Gamma', N \sqcup N')$ and so the claim follows by Lemma \ref{lemma: strengthening link regularity}.
\end{proof}

\begin{defi}
\label{def: power profile}
    Let $\Sigma : V\Gamma \to \Z$ be a powered clique of a numbered graph $(\Gamma, N)$. The \textit{power profile of $\Sigma$}, written $P(\Sigma)$, is the multiset $\{(\Sigma(v), N(v)) \mid v \in \supp \Sigma\}$. If the power profile of $\Sigma$ is $P$, we say $\Sigma$ is a $P$-state.
\end{defi}

\begin{lem}
    \label{lemma: bijection between transitions}
    Let $(\Gamma, N)$ and $(\Gamma', N')$ be equivalent link-regular numbered graphs, and let $\Sigma : V\Gamma \to \Z$ and $\Sigma' : V\Gamma' \to \Z$ be powered cliques such that $P(\Sigma) = P(\Sigma')$. Then there is a bijection $\Phi : V\Gamma \to V\Gamma'$ such that
    \begin{equation}
        \label{eq: power profile of transition}
        P(\delta(\Sigma, v)) = P(\delta'(\Sigma', \Phi(v)))
        \quad \text{and} \quad
        P(\delta(\Sigma, v^{-1})) = P(\delta'(\Sigma', \Phi(v)^{-1}))
    \end{equation}
    for all $v \in V\Gamma$. Here, $\delta$ and $\delta'$ denote the transition maps of $\mathcal{D}$ and $\mathcal{D}'$, respectively.
\end{lem}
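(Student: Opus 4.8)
The plan is to build $\Phi$ by handling $\supp\Sigma$ and its complement separately, after first observing that the power profile of any transition out of $\Sigma$ is determined by data that link-regular equivalence preserves. Write $\sigma = \supp\Sigma$ and $\sigma' = \supp\Sigma'$. Since $P(\Sigma) = P(\Sigma')$, there is a bijection $\beta\colon \sigma \to \sigma'$ with $\Sigma'(\beta(v)) = \Sigma(v)$ and $N'(\beta(v)) = N(v)$ for all $v \in \sigma$; in particular $N(\sigma) = N'(\sigma')$, and for every subclique $\tau \subseteq \sigma$ the image $\beta(\tau)$ is a subclique of $\sigma'$ with $N(\tau) = N'(\beta(\tau))$ and with matching restricted profile $\{(\Sigma(u), N(u)) \mid u \in \tau\} = \{(\Sigma'(u'), N'(u')) \mid u' \in \beta(\tau)\}$.

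First I would record the decomposition $V\Gamma \setminus \sigma = \bigsqcup_{\tau \subseteq \sigma} \Lk(\sigma; \tau)$: every $v \notin \sigma$ lies in exactly one block, the one for $\tau = \Lk(v) \cap \sigma$ (a subclique of $\sigma$ because $\sigma$ is a clique), and likewise for $\Gamma'$. Applying Corollary \ref{corollary: strengthening link regularity} to $\sigma, \sigma'$ with the pair $\tau, \beta(\tau)$ gives $N(\Lk(\sigma; \tau)) = N'(\Lk(\sigma'; \beta(\tau)))$ for each subclique $\tau \subseteq \sigma$, so I can pick an $N$-preserving bijection $\Lk(\sigma; \tau) \to \Lk(\sigma'; \beta(\tau))$ for each $\tau$. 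Define $\Phi$ to be $\beta$ on $\sigma$ and these chosen bijections on the blocks $\Lk(\sigma; \tau)$. Since the domains partition $V\Gamma$ and the images partition $V\Gamma'$, $\Phi$ is a bijection that preserves vertex numbers, and for $v \notin \sigma$ we have $\Lk(\Phi(v)) \cap \sigma' = \beta(\Lk(v) \cap \sigma)$ by construction.

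It then remains to check (\ref{eq: power profile of transition}). Whether $\delta(\Sigma, v^{\pm 1})$ is the fail state depends only on the pair $(\Sigma(v), N(v))$ (and, for the $v^{-1}$ case, on whether $N(v) = 2$); since $\Phi$ preserves $N(v)$ and preserves $\Sigma(v)$ on $\sigma$ while $\Sigma(v) = \Sigma'(\Phi(v)) = 0$ off $\sigma$, the transition out of $\Sigma$ along $v^{\pm 1}$ is fail exactly when the transition out of $\Sigma'$ along $\Phi(v)^{\pm 1}$ is. When they are not fail, I would read off both profiles from the transition formula of Theorem \ref{theorem: fsa}: for $v \in \sigma$ the support stays $\sigma$ and $P(\delta(\Sigma, v^{\pm 1}))$ is $P(\Sigma)$ with $(\Sigma(v), N(v))$ replaced by $(\Sigma(v) \pm 1, N(v))$, which $\beta$ matches on the $\Gamma'$ side; for $v \notin \sigma$ the support is $(\Lk(v) \cap \sigma) \cup \{v\}$ and $P(\delta(\Sigma, v^{\pm 1}))$ equals $\{(\Sigma(u), N(u)) \mid u \in \Lk(v) \cap \sigma\}$ together with $(\pm 1, N(v))$, which agrees with the corresponding profile for $\Phi(v)$ because the restricted profiles of $\Lk(v) \cap \sigma$ and $\Lk(\Phi(v)) \cap \sigma' = \beta(\Lk(v) \cap \sigma)$ coincide and $N(v) = N'(\Phi(v))$.

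The main obstacle is getting the construction of $\Phi$ in the second paragraph right: matching $V\Gamma \setminus \sigma$ with $V\Gamma' \setminus \sigma'$ by vertex number alone is insufficient, since $P(\delta(\Sigma, v))$ for $v \notin \sigma$ records the actual powers $\Sigma(u)$ of the vertices $u \in \Lk(v) \cap \sigma$, not just their numbers. The fix is to refine the complement into the blocks $\Lk(\sigma; \tau)$ indexed by attachment sets and to transport them using the same bijection $\beta$ fixed on the support, so that the support-matching and the complement-matching are compatible; Corollary \ref{corollary: strengthening link regularity} is exactly what makes corresponding blocks have the same vertex-number multiset, hence the same size after labelling.
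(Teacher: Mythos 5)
Your proposal is correct and follows essentially the same route as the paper's proof: fix a profile-preserving bijection on the supports, decompose the complements as $\bigsqcup_{\tau \subseteq \sigma} \Lk(\sigma;\tau)$, invoke Corollary \ref{corollary: strengthening link regularity} to match corresponding blocks by vertex number, and then verify the transition profiles directly. Your explicit treatment of the fail-state and $N(v)=2$ edge cases is slightly more careful than the paper's, but the argument is the same.
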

\begin{proof}
    Let $\sigma$ and $\sigma'$ denote the supports of $\Sigma$ and $\Sigma'$, respectively. Because $P(\Sigma) = P(\Sigma')$, we can find a bijection $\Phi : \sigma \to \sigma'$ such that
    \begin{equation}
        \label{eq: properties of phi}
        N'(\Phi(v)) = N(v) \quad \text{and} \quad
        \Sigma'(\Phi(v)) = \Sigma(v)
    \end{equation}
    for all $v \in \sigma$. By the definition of the transition maps $\delta$ and $\delta'$, $\Phi$ satisfies (\ref{eq: power profile of transition}) for all $v \in \sigma$.

    We aim to extend $\Phi$ to a bijection $V\Gamma \to V\Gamma'$. Observe that
    \[
        V\Gamma = \sigma \sqcup \bigsqcup_{\tau \subseteq \sigma} \Lk(\sigma; \tau)
    \]
    and
    \[
        V\Gamma'
        = \sigma' \sqcup \bigsqcup_{\tau' \subseteq \sigma'} \Lk(\sigma'; \tau')
        = \sigma' \sqcup \bigsqcup_{\tau \subseteq \sigma} \Lk(\sigma'; \Phi(\tau)).
    \]
    To extend $\Phi$ to all of $V\Gamma$, it suffices to construct individual bijections $\Phi : \Lk(\sigma; \tau) \to \Lk(\sigma'; \Phi(\tau))$ for all $\tau \subseteq \sigma$ satisfying (\ref{eq: power profile of transition}). Piecing these bijections together into a single bijection $V\Gamma \to V\Gamma'$ yields the desired map.

    Let $\tau \subseteq \sigma$. By (\ref{eq: properties of phi}), we have $N(\Phi(\tau)) = N(\tau)$, so we have by Corollary \ref{corollary: strengthening link regularity} that $N(\Lk(\sigma; \tau)) = N(\Lk(\sigma'; \Phi(\tau))$. Hence, we can find a bijection $\Phi : \Lk(\sigma; \tau) \to \Lk(\sigma'; \Phi(\tau))$ such that
    \[
        N'(\Phi(v)) = N(v)
    \]
    for all $v \in \Lk(\sigma; \tau)$. We claim that this map satisfies (\ref{eq: power profile of transition}). For $v \in \Lk(\sigma; \tau)$, we have by definition of $\delta$ that
    \[
        P(\delta(\Sigma, v))
        = \{(1, N(v)))\} \sqcup \bigsqcup_{u \in \tau} \{(\Sigma(u), N(u))\},
    \]
    and because $\Phi(v) \in \Lk(\sigma'; \Phi(\tau))$, we similarly have
    \[
        P(\delta'(\Sigma', \Phi(v)))
        = \{(1, N'(\Phi(v)))\} \sqcup \bigsqcup_{u \in \Phi(\tau)} \{(\Sigma'(u), N'(u))\}.
    \]
    Applying (\ref{eq: properties of phi}), we get
    \begin{align*}
        P(\delta'(\Sigma', \Phi(v)))
         & = \{(1, N'(\Phi(v)))\} \sqcup \bigsqcup_{u \in \Phi(\tau)} \{(\Sigma'(u), N'(u))\}       \\
         & = \{(1, N'(\Phi(v)))\} \sqcup \bigsqcup_{u \in \tau} \{(\Sigma'(\Phi(u)), N'(\Phi(u)))\} \\
         & = \{(1, N(v)))\} \sqcup \bigsqcup_{u \in \tau} \{(\Sigma(u), N(u))\}                     \\
         & = P(\delta(\Sigma, v)).
    \end{align*}
    Arguing analogously, one can show that $P(\delta'(\Sigma', \Phi(v)^{-1})) = P(\delta(\Sigma, v^{-1}))$ when $N(v) \neq 2$. This shows that $\Phi$ satisfies (\ref{eq: power profile of transition}) for all $v \in \Lk(\sigma; \tau)$, which completes the proof.
\end{proof}

\begin{cor}
    \label{corollary: constant corollary}
    Let $(\Gamma, N)$ and $(\Gamma', N')$ be equivalent link-regular numbered graphs, and let $\mathcal{D}$ and $\mathcal{D}'$ be the FSAs constructed in Theorem \ref{theorem: fsa} that recognize the geodesic language of $\NGP(\Gamma)$ and $\NGP(\Gamma')$. Let $P$ and $Q$ be arbitrary power profiles of states in $\mathcal{D}$ and $\mathcal{D}'$. If $\Sigma$ and $\Sigma'$ are $P$-states of $\mathcal{D}$ and $\mathcal{D}'$, respectively, then the number of transitions from $\Sigma$ to a $Q$-state in $\mathcal{D}$ is the same as the number of transitions from $\Sigma'$ to a $Q$-state in $\mathcal{D}'$.
\end{cor}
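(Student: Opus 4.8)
The plan is to read the statement off Lemma \ref{lemma: bijection between transitions} by a direct counting argument. First I would record what is being counted: a transition from $\Sigma$ to a $Q$-state in $\mathcal{D}$ is a pair $(s,\delta(\Sigma,s))$ with $s\in S$ and $\delta(\Sigma,s)$ a non-fail state whose power profile is $Q$. Since each generator $s$ contributes at most one such pair, the number of these transitions equals $|\{s\in S\mid \delta(\Sigma,s)\text{ is a }Q\text{-state}\}|$ (and, should one instead wish to count distinct target states, the same number results, because distinct generators $s_1\neq s_2$ always give $\delta(\Sigma,s_1)\neq\delta(\Sigma,s_2)$ whenever both are non-fail: the two targets disagree on their power at the vertex underlying $s_1$ or $s_2$). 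The analogous statement holds for $\mathcal{D}'$, so it suffices to produce a bijection between $\{s\in S\mid \delta(\Sigma,s)\text{ is a }Q\text{-state}\}$ and $\{s'\in S'\mid \delta'(\Sigma',s')\text{ is a }Q\text{-state}\}$.

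Next I would invoke Lemma \ref{lemma: bijection between transitions} with $P(\Sigma)=P(\Sigma')=P$ to obtain a bijection $\Phi:V\Gamma\to V\Gamma'$ satisfying (\ref{eq: power profile of transition}). As the $\Phi$ built in that lemma preserves vertex numbers (on $\supp\Sigma$ by (\ref{eq: properties of phi}), and on each $\Lk(\sigma;\tau)$ by construction in that proof), it carries vertices $v$ with $N(v)=2$ to vertices $w$ with $N'(w)=2$. Hence $\Phi$ induces a bijection $\psi:S\to S'$ by $\psi(v)=\Phi(v)$ and $\psi(v^{-1})=\Phi(v)^{-1}$: the two prescriptions agree exactly on the order-$2$ vertices, where $v=v^{-1}$ in $S$ and $\Phi(v)=\Phi(v)^{-1}$ in $S'$, so $\psi$ is well defined and clearly bijective. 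Equation (\ref{eq: power profile of transition}) then says precisely that $\delta(\Sigma,s)$ and $\delta'(\Sigma',\psi(s))$ have the same power profile for every $s\in S$, with a transition into the fail state matched to a transition into the fail state.

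Finally I would conclude: for each $s\in S$, the transition $\delta(\Sigma,s)$ is a $Q$-state if and only if it is non-fail with power profile $Q$, if and only if $\delta'(\Sigma',\psi(s))$ is non-fail with power profile $Q$, i.e.\ a $Q$-state. Thus $\psi$ restricts to a bijection between the two generator sets described above, and combining this with the first paragraph gives the equality of the two transition counts. I do not expect a substantive obstacle, since all the work is in Lemma \ref{lemma: bijection between transitions}; the only point needing care is the bookkeeping around $S$ in the case $N(v)=2$ (where $v$ and $v^{-1}$ coincide and contribute a single transition), which is exactly what the vertex-number-preservation of $\Phi$ takes care of.
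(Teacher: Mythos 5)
Your proposal is correct and follows essentially the same route as the paper: the paper's proof simply takes $\Phi$ from Lemma \ref{lemma: bijection between transitions}, observes via (\ref{eq: power profile of transition}) that $\delta(\Sigma,v)$ is a $Q$-state iff $\delta'(\Sigma',\Phi(v))$ is, and concludes from bijectivity of $\Phi$. Your additional bookkeeping about the induced map on generators and the $N(v)=2$ case (where $v=v^{-1}$) is a legitimate refinement of a point the paper leaves implicit, but it is not a different argument.
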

\begin{proof}
    Let $\Phi : V\Gamma \to V\Gamma'$ be as in Lemma \ref{lemma: bijection between transitions}. By (\ref{eq: power profile of transition}), $\delta(\Sigma, v)$ is a $Q$-state if and only if $\delta(\Sigma', \Phi(v))$ is a $Q$-state. Because $\Phi$ is a bijection, there is a bijective correspondence between the transitions from $\Sigma$ to $Q$-states in $\mathcal{D}$ and the transitions from $\Sigma$ to $Q$-state in $\mathcal{D}'$, so we are done.
\end{proof}

Corollary \ref{corollary: constant corollary} allows us to define the following quantities $\beta_{P \to Q}$, which are well-defined up to equivalence of link-regular numbered graphs.

\begin{defi}
    Fix some numbered graph $(\Gamma, N)$, and let $\mathcal{D}$ be the FSA that recognizes the geodesic language of $\NGP(\Gamma)$. Given arbitrary power profiles $P$ and $Q$ of powered cliques corresponding to the states in $\mathcal{D}$, we define $\beta_{P \rightarrow Q}$ to be the number of transitions from a fixed $P$-state to a $Q$-state.
\end{defi}

\begin{thm}
    \label{theorem: main}
    If $(\Gamma, N)$ and $(\Gamma', N')$ are equivalent link-regular numbered graphs, then $\NGP(\Gamma)$ and $\NGP(\Gamma')$ have the same geodesic growth series.
\end{thm}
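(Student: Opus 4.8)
The plan is to work with the FSAs $\mathcal{D}$ and $\mathcal{D}'$ from Theorem \ref{theorem: fsa} and extract the geodesic growth series via the Chomsky--Schützenberger method (Theorem \ref{theorem: chomsky black magic}). Since $\mathcal{D}$ is deterministic, the associated regular grammar of Proposition \ref{proposition: grammar rules} is unambiguous, and after discarding states not reachable from the start state (which changes neither the accepted language nor the start state) every variable is reachable; the same holds for $\mathcal{D}'$. For a powered clique $\Sigma$, let $\mathcal{L}_\Sigma$ be the language accepted from $\Sigma$ and $A_\Sigma(z)$ its growth series, so that the geodesic growth series of $\NGP(\Gamma)$ is $A_{\Sigma_0}(z)$, where $\Sigma_0$ is the empty powered clique (the start state of $\mathcal{D}$); likewise for $\Gamma'$. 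Because every powered clique is an accept state while the lone reject state accepts nothing, Theorem \ref{theorem: chomsky black magic} gives, for each powered clique $\Sigma$,
\[
    A_\Sigma(z) = z \sum_{a \in S} A_{\delta(\Sigma, a)}(z) + 1,
\]
with the convention $A_{q_{\mathrm{rej}}}(z) = 0$, and an identical family of equations holds in $\mathcal{D}'$.

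The key step is to show that $A_\Sigma(z)$ depends only on the power profile $P(\Sigma)$, and that the resulting function of the power profile is the same for $\mathcal{D}$ and $\mathcal{D}'$. I would prove this by induction on the coefficient index $n$: writing $f_n(P)$ for the $n$th coefficient of $A_\Sigma(z)$ where $\Sigma$ is a $P$-state, the base case $f_0(P) = 1$ holds because $\varepsilon$ lies in every $\mathcal{L}_\Sigma$ (every powered clique is an accept state). For the inductive step, extracting the $n$th coefficient of the displayed equation, grouping the transitions out of $\Sigma$ by the power profile of their target, and discarding the (zero) contributions of transitions into the reject state yields
\[
    f_n(P) = \sum_{Q} \beta_{P \to Q}\, f_{n-1}(Q),
\]
the sum being over power profiles $Q$, with $\beta_{P \to Q}$ the quantities defined following Corollary \ref{corollary: constant corollary}. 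By that corollary, $\beta_{P\to Q}$ is independent of the chosen $P$-state and is the same in $\mathcal{D}$ and $\mathcal{D}'$, so by the inductive hypothesis the right-hand side depends only on $P$ and $n$ and agrees across $\mathcal{D}$ and $\mathcal{D}'$. Evaluating at $P = \emptyset$, the power profile of the empty start states $\Sigma_0$ and $\Sigma_0'$, gives $A_{\Sigma_0}(z) = \sum_{n \geq 0} f_n(\emptyset) z^n = A_{\Sigma_0'}(z)$, which is the desired equality of geodesic growth series.

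The substantive content has in effect already been established: Lemma \ref{lemma: bijection between transitions} and Corollary \ref{corollary: constant corollary} are what make $\beta_{P \to Q}$ a well-defined invariant of the pair of power profiles, insensitive to which equivalent link-regular numbered graph one starts from, and the present argument is mostly an organized induction built on top of them. The points that need care are bookkeeping ones: confirming that the constant term in the Chomsky--Schützenberger equations appears uniformly because the accept states of $\mathcal{D}$ are exactly the powered cliques; checking that transitions into the reject state may be ignored since that state accepts the empty language; and observing that the start states of $\mathcal{D}$ and $\mathcal{D}'$ both have empty power profile, so that the common value $f_n(\emptyset)$ genuinely computes both geodesic growth series. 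I expect the only real obstacle to be a formal one: phrasing the induction on the coefficient index rather than circularly on the generating functions themselves, so that each appeal to Corollary \ref{corollary: constant corollary} is legitimate.
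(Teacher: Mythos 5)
Your proof is correct and takes essentially the same approach as the paper: both arguments reduce the theorem to the invariance of the quantities $\beta_{P \to Q}$ established in Corollary \ref{corollary: constant corollary}, and then run an induction on word length. The only (immaterial) difference is direction of the count --- the paper directly counts length-$m$ words from the start state grouped by the power profile of the state they end at, with recurrence $|B_P(m)| = \sum_{Q} \beta_{Q \to P}|B_Q(m-1)|$, whereas you count words accepted \emph{from} each state via the grammar of Theorem \ref{theorem: chomsky black magic}, with the dual recurrence $f_n(P) = \sum_{Q} \beta_{P \to Q} f_{n-1}(Q)$.
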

\begin{proof}
    Let $G = \NGP(\Gamma)$, and let $S$ be the standard generating set of $G$. We will show that the geodesic growth series of $(G, S)$ can be written in terms of the quantities $\beta_{P \rightarrow Q}$. Because the values $\beta_{P \rightarrow Q}$ are the same for equivalent link-regular numbered graphs, this proves the theorem.

    Let $d$ be the maximum clique size of $\Gamma$, and let $M$ be the maximum number $N(v)$ assigned to a vertex. We let $\mathcal{P}$ denote the collection of all multisets with elements in $\{-\lfloor M / 2 \rfloor, \dots, \lfloor M / 2 \rfloor\} \times \{N(v) \mid v \in V\}$ that have at most $d$ elements. Note that $\mathcal{P}$ contains all possible power profiles of states in $\mathcal{D}$. For each $P \in \mathcal{P}$, let $B_P(m)$ denote the collection of length $m$ words in $S^*$ that end at a $P$-state when processed by $\mathcal{D}$. Each geodesic of $(G, S)$ belongs to a unique set $B_P(m)$, so the total number of $m$-length geodesic words is given by the sum
    \[
        \sum_{P \in \mathcal{P}} |B_P(m)|.
    \]
    Therefore, the geodesic growth series of $(G,S)$ can be written as
    \[
        \sum_{m = 0}^\infty \left(\sum_{P \in \mathcal{P}} |B_P(m)|\right)z^m.
    \]

    Hence, it suffices to show that the quantities $|B_P(m)|$ can be written in terms of the quantities $\beta_{P \rightarrow Q}$. This is trivial for the case $m = 0$, where $|B_P(m)| = 0$ if $P$ is non-empty, and $|B_P(m)| = 1$ if $P$ is empty. Inductively, assume there is some $m > 0$ such that all the $|B_P(m)|$ can be written in terms of the $\beta_{P \rightarrow Q}$. We want to show that the same is true for all the $|B_P(m + 1)|$. Fix $P \in \mathcal{P}$. Observe that a word $x \in B_P(m)$ can be written as $x = ys$ for a word $y \in B_Q(m)$ with $Q \in \mathcal{P}$ and a single letter $s$. That is, a choice of a word $x \in B_P(m)$ consists of a choice of a power profile $Q \in \mathcal{P}$, then a choice of a word $y \in B_Q(m)$, and finally a choice of $s \in S$ for which there is an $s$-transition from the $Q$-state that $y$ ends at to some $P$-state. Hence, we have the recurrence
    \[
        |B_P(m)| = \sum_{Q \in \mathcal{P}} \beta_{Q \to P}|B_Q(m - 1)|.
    \]
    This proves that $|B_P(m)|$ can be written in terms of the quantities $\beta_{P \rightarrow Q}$, and so we are done by induction.
\end{proof}

Using Theorem \ref{theorem: main}, we are able to find examples of NGPs associated to non-isomorphic graphs that have the same geodesic growth series. For example, the NGPs associated to the numbered graphs from Example \ref{example: equivalence} have the same geodesic growth series, despite the graphs being non-isomorphic. More generally, the NGP associated to the disjoint union of identical $n$-cycles has the same geodesic growth series as the NGP associated to the $2n$-cycle with the same vertex numbers. Another example of this construction is the following:

\begin{exa}
    The disjoint union of the two $5$-cycles and the $10$-cycle shown below are equivalent as link-regular numbered graphs, so their associated NGPs have the same geodesic growth.
    \begin{center}
        \begin{tikzpicture}
            \foreach \th/\lab in {1/2, 2/3, 3/4, 4/5, 5/6} {
                    \node[label={[label distance=-0.1cm]\th * 360 / 5 + 360 / 20:$\lab$}] (v\th) at (\th * 360 / 5 + 360 / 20:1) {};
                    \filldraw (v\th) circle (2pt);
                }
            \draw (v1.center) -- (v2.center) -- (v3.center) -- (v4.center) -- (v5.center) -- cycle;

            \foreach \th/\lab in {1/2, 2/3, 3/4, 4/5, 5/6} {
                    \node[label={[label distance=-0.1cm]\th * 360 / 5 + 360 / 20:$\lab$}] (v\th) at ($(3, 0) + (\th * 360 / 5 + 360 / 20:1)$) {};
                    \filldraw (v\th) circle (2pt);
                }
            \draw (v1.center) -- (v2.center) -- (v3.center) -- (v4.center) -- (v5.center) -- cycle;

            \foreach \th/\lab in {1/2, 2/3, 3/4, 4/5, 5/6, 6/2, 7/3, 8/4, 9/5, 10/6} {
                    \node[label={[label distance=-0.1cm]\th * 360 / 10:$\lab$}] (v\th) at ($(7.5, 0) + (\th * 360 / 10:1.5)$) {};
                    \filldraw (v\th) circle (2pt);
                }
            \draw (v1.center) -- (v2.center) -- (v3.center) -- (v4.center) -- (v5.center) -- (v6.center) -- (v7.center) -- (v8.center) -- (v9.center) -- (v10.center) -- cycle;
        \end{tikzpicture}
    \end{center}
\end{exa}

These examples demonstrate the following corollary to Theorem \ref{theorem: main}.

\begin{cor}
    There are infinitely many pairs of NGPs associated to non-isomorphic graphs that have the same geodesic growth.\qed
\end{cor}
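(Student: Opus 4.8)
The plan is to deduce the corollary directly from Theorem~\ref{theorem: main} by exhibiting an explicit infinite family of pairs of non-isomorphic numbered graphs, each pair consisting of two \emph{equivalent} link-regular numbered graphs. Once such a family is in hand, Theorem~\ref{theorem: main} immediately supplies infinitely many pairs of NGPs on non-isomorphic graphs with the same geodesic growth series, and there is nothing further to do.

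First I would fix, for every integer $n \geq 4$, the two numbered graphs $(\Gamma_n, N_n)$ and $(\Gamma'_n, N'_n)$, where $\Gamma_n$ is the disjoint union of two copies of the $n$-cycle, $\Gamma'_n$ is the $2n$-cycle, and $N_n$, $N'_n$ are the constant numberings equal to $2$ (so that $\NGP(\Gamma_n)$ and $\NGP(\Gamma'_n)$ are right-angled Coxeter groups; any fixed constant numbering works equally well). These two graphs are not isomorphic, since $\Gamma'_n$ is connected while $\Gamma_n$ has two connected components.

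Next I would verify that $(\Gamma_n, N_n)$ and $(\Gamma'_n, N'_n)$ are equivalent link-regular numbered graphs. Because $n \geq 4$, every cycle occurring here is triangle-free, so the only cliques in $\Gamma_n$, in $\Gamma'_n$, and in their disjoint union are the empty clique, the single vertices, and the edges. For a single vertex $v$, $\Lk(v)$ is the set of its two neighbours, so $N(\Lk(v)) = \{2, 2\}$; for an edge $\{u,v\}$, $\Lk(\{u,v\}) = \emptyset$, so $N(\Lk(\{u,v\})) = \emptyset$; and the empty clique has link equal to the whole vertex set. Each of these multisets depends only on the size of the clique, not on the clique itself, so $\Gamma_n$, $\Gamma'_n$, and $(\Gamma_n \sqcup \Gamma'_n, N_n \sqcup N'_n)$ are all link-regular. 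Since $\Gamma_n$ and $\Gamma'_n$ each have $2n$ vertices all numbered $2$, we have $N_n(V\Gamma_n) = N'_n(V\Gamma'_n)$, which is condition (i) of Definition~\ref{def: graph equivalence}; condition (ii) is precisely the link-regularity of the disjoint union, which we have just checked. Hence $(\Gamma_n, N_n)$ and $(\Gamma'_n, N'_n)$ are equivalent.

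Finally, Theorem~\ref{theorem: main} gives that $\NGP(\Gamma_n)$ and $\NGP(\Gamma'_n)$ have the same geodesic growth series for every $n \geq 4$, and since graphs with different vertex counts cannot be isomorphic, the pairs $(\Gamma_n, \Gamma'_n)$ are pairwise distinct, yielding infinitely many examples. There is no genuinely difficult step: the link computations in cycles are routine and the substance is entirely contained in Theorem~\ref{theorem: main}. The only point needing a moment's care is the restriction $n \geq 4$: the $3$-cycle is a triangle, so a $2$-clique of $\Gamma_3$ would have a non-empty link while the corresponding $2$-clique of the triangle-free $6$-cycle $\Gamma'_3$ has empty link, and those two graphs would fail to be equivalent.
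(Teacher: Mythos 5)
Your proposal is correct and matches the paper's own justification: the corollary is presented there as an immediate consequence of Theorem~\ref{theorem: main} applied to the family consisting of the disjoint union of two identical $n$-cycles versus the $2n$-cycle, which is exactly the family you construct and verify. Your only additions are the explicit link computations and the (correct) observation that one must avoid $n = 3$, both of which the paper leaves implicit.
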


\section{Right-angled coxeter groups}
\label{sec: theorem 2}
We now shift our attention to a specific class of NGPs known as \textit{right-angled Coxeter groups}.

\begin{defi}[Right-angled Coxeter groups]
    A \textit{right-angled Coxeter group} (RACG) is an NGP associated to a numbered graph whose vertex numbers are all $2$. In particular, we may speak of the \textit{RACG associated to a simplicial graph $\Gamma$}, written $\RACG(\Gamma)$, which we define to be the NGP associated to the numbered graph $(\Gamma, N)$ where $N(v) = 2$ for all $v \in V(\Gamma)$.

    As with NGPs, whenever we refer to the geodesics (or geodesic growth) of some RACG, we are working with respect to the standard generating set.
\end{defi}

The results we have so far derived for NGPs become much simpler in the context of RACGs. For instance, consider the FSA $\mathcal{D}$ we constructed in Theorem \ref{theorem: fsa} that recognizes the geodesic language of some RACG. The states of $\mathcal{D}$ are powered cliques, but because every vertex number is $2$, a powered clique can only assign the powers $0$ and $1$ to vertices. Thus, we may identify a powered clique with its support, which must be a clique. In particular, the states of $\mathcal{D}$ are in bijection with the cliques of the graph. Rewriting the transition map in terms of cliques, we obtain the following special case of Theorem \ref{theorem: fsa}.

\begin{prop}
    \label{proposition: racg fsa}
    The geodesic language of $\RACG(\Gamma)$ is accepted by the FSA $\mathcal{D}$, where the states are the cliques of $\Gamma$ plus a designated reject state $q_\mathrm{rej}$, the start state is the empty set, and the transition map $\delta$ is defined by
    \[
        \delta(\sigma, v) =
        \begin{cases}
            (\Lk(v) \cap \sigma) \cup \{v\} & \text{if } v \notin \sigma, \\
            q_\mathrm{rej}                  & \text{otherwise}.
        \end{cases}
    \]
\end{prop}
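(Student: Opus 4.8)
The plan is to deduce this proposition directly from Theorem~\ref{theorem: fsa} by specializing to the case $N(v) = 2$ for every $v \in V\Gamma$. First I would observe that when $N(v) = 2$ we have $\lfloor N(v)/2 \rfloor = 1$, so the defining bounds for a powered clique force $\Sigma(v) \in \{0, 1\}$ for all $v$; consequently a powered clique $\Sigma$ of $(\Gamma, N)$ is completely determined by its support $\supp \Sigma$, which by definition is a clique. This gives a bijection between powered cliques and cliques of $\Gamma$, under which the empty powered clique corresponds to the empty clique (the start state) and the single reject state of $\mathcal{D}$ is carried over unchanged. I would also note that since $v = v^{-1}$ in $\RACG(\Gamma)$ whenever $N(v) = 2$, the standard generating set $S = V\Gamma \cup (V\Gamma)^{-1}$ equals $V\Gamma$, so it suffices to describe the transitions $\delta(\sigma, v)$ for $v \in V\Gamma$.

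Next I would translate the transition map of $\mathcal{D}$ from Theorem~\ref{theorem: fsa} through this bijection. Fix a clique $\sigma$, identified with the powered clique $\Sigma$ whose power is $1$ exactly on $\sigma$, and fix a vertex $v$. The condition $0 \le \Sigma(v) < \lfloor N(v)/2\rfloor = 1$ holds precisely when $\Sigma(v) = 0$, i.e.\ when $v \notin \sigma$. In that case Theorem~\ref{theorem: fsa} gives that $\delta(\Sigma, v)$ is the powered clique with support $(\supp\Sigma \cap \Lk(v)) \cup \{v\} = (\sigma \cap \Lk(v)) \cup \{v\}$ and all powers equal to $1$ there, which under the identification is exactly the clique $(\Lk(v) \cap \sigma) \cup \{v\}$. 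If instead $v \in \sigma$, then $\Sigma(v) = 1$ fails the inequality, and since $N(v) = 2$ the symmetric $v^{-1}$-transition of Theorem~\ref{theorem: fsa} is not defined, so $\delta(\Sigma, v) = q_\mathrm{rej}$. This matches the transition map in the proposition statement.

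Finally, since $\mathcal{D}$ accepts the geodesic language of $\NGP(\Gamma, N) = \RACG(\Gamma)$ by Theorem~\ref{theorem: fsa}, and the FSA described in the proposition is obtained from $\mathcal{D}$ merely by relabeling states along the bijection above and restricting to the alphabet $V\Gamma = S$, the two automata accept the same language. I do not expect any substantive obstacle here; the only points requiring care are the bookkeeping of the powered-clique/clique identification and the observation that the $v^{-1}$-transitions collapse away because $N(v) = 2$ makes $v$ and $v^{-1}$ coincide --- precisely the edge case flagged in the remark following the definition of geodesic languages.
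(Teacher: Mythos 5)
Your proposal is correct and follows exactly the route the paper takes (in the discussion immediately preceding the proposition): identify each powered clique with its support since $N(v)=2$ forces all powers into $\{0,1\}$, note that $v^{-1}=v$ collapses the inverse transitions, and translate the transition map of Theorem \ref{theorem: fsa} through this bijection. No gaps; your write-up is if anything more careful than the paper's.
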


Similarly, the definition of link-regularity given in Section \ref{sec: theorem 1} can be simplified when dealing with RACGs. Consider a numbered graph $(\Gamma, N)$ whose vertex numbers are all $2$. Recall that $(\Gamma, N)$ is link-regular if $N(\sigma_1) = N(\sigma_2)$ implies $N(\Lk(\sigma_1)) = N(\Lk(\sigma_2))$ for cliques $\sigma_1, \sigma_2$ of $\Gamma$. But since every vertex number is $2$, the condition $N(U_1) = N(U_2)$ for subsets $U_1, U_2 \subseteq V(\Gamma)$ holds precisely when $|U_1| = |U_2|$. Therefore, $(\Gamma, N)$ is link-regular if and only if $|\sigma_1| = |\sigma_2|$ implies $\abs{\Lk(\sigma_1)} = \abs{\Lk(\sigma_2)}$ for all cliques $\sigma_1, \sigma_2$. This condition motivates the following definition:

\begin{defi}[Link-regularity of simplicial graphs]
    \label{def: link regularity of simplicial graphs}
    Let $\Gamma$ be a simplicial graph. We say that $\Gamma$ is \textit{link-regular} if whenever two cliques $\sigma_1, \sigma_2$ of $\Gamma$ have the same size, their links $\Lk(\sigma_1), \Lk(\sigma_2)$ have the same size.
\end{defi}

As before, we can define two link-regular simplicial graphs to be equivalent if their disjoint union is link-regular. In this setting, Theorem \ref{theorem: main} says that RACGs associated to equivalent link-regular simplicial graphs have identical geodesic growth series. This is one of the main results of \cite{Ant12}. In this section, we generalize their result by deriving the following closed-form expression for geodesic growth.

\begin{thm}
    \label{thm: main RACG theorem}
    Let $\Gamma$ be a link-regular simplicial graph with maximum clique size $d$. Let $\ell_0 = |V(\Gamma)|$, and for $1 \leq k \leq d$, let $\ell_k = \abs{\Lk(\sigma)}$, where $\sigma$ is any $k$-clique (this is well-defined by link-regularity of $\Gamma$). For integers $0 \leq m \leq d$ and $0 \leq k \leq m$, set
    \[
        N_{m,k} =
        \begin{dcases}
            \left(\prod_{k < j < m} \ell_j\right)\sum_{j = k}^m \binom{m - k}{j - k} (-1)^{j - k} \ell_j
                        & \text{if $k < m - 1$} \\
            \ell_{m - 1} - \ell_m - 1 & \text{if $k = m - 1$} \\
            1 & \text{if $k = m$},
        \end{dcases}
    \]
    and for integers $0 \leq i \leq d$ and $0 \leq j \leq i$, set
    \[
        \begin{dcases}
            M_{i, j} = \sum_{\substack{j \leq s_1 < t_1 \leq \dots \leq s_n < t_n \leq d \\
                (t_1 - s_1) + \dots + (t_n - s_n) = i - j}}
        (-1)^n \binom{t_1}{s_1} \cdots \binom{t_n}{s_n} N_{t_1, s_1} \cdots N_{t_n, s_n}
        & \text{if $i \neq j$} \\
            M_{i, j} = 1 & \text{if $i = j$.} \\
        \end{dcases}
    \]
    Then the geodesic growth of $\RACG(\Gamma)$ is given by the rational function
    \[
        \mathcal{G}(z) =
        \frac{\sum_{i = 0}^d \left(\sum_{j = 0}^i \ell_0 \cdots \ell_{j - 1} M_{i, j}\right)z^i}
        {\sum_{i = 0}^d M_{i, 0} z^i}.
    \]
\end{thm}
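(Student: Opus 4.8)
The plan is to follow the route indicated in the introduction: convert the geodesic language into a system of equations via Theorem \ref{theorem: chomsky black magic}, collapse that system to size $d+1$ using link-regularity, and then solve the resulting recurrence by induction.

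\emph{Setting up the system.} First I would combine Proposition \ref{proposition: racg fsa} with Proposition \ref{proposition: grammar rules} to get a regular grammar generating the geodesic language of $\RACG(\Gamma)$, whose variables are the cliques of $\Gamma$ (the reject state yields the zero series and may be discarded). This grammar is unambiguous because the automaton of Proposition \ref{proposition: racg fsa} is deterministic, and every clique is reachable by spelling out its vertices, so Theorem \ref{theorem: chomsky black magic} gives, for each clique $\sigma$,
\[
    A_\sigma(z) = z\sum_{v \in V\Gamma \setminus \sigma} A_{(\Lk(v)\cap\sigma)\cup\{v\}}(z) + 1 ,
\]
where $A_\sigma(z)$ is the growth series of the words accepted from state $\sigma$; in particular $\mathcal{G}(z) = A_\emptyset(z)$.

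\emph{Collapsing to $d+1$ equations.} Applying Corollary \ref{corollary: constant corollary} with $\Gamma' = \Gamma$ (and inducting on word length, as in the proof of Theorem \ref{theorem: main}) shows that $A_\sigma(z)$ depends only on $|\sigma|$; call its common value on $k$-cliques $G_k(z)$. Grouping the sum above according to the subclique $\tau = \Lk(v)\cap\sigma$ and invoking Lemma \ref{lemma: strengthening link regularity} (so that $L_{k,j} := |\Lk(\sigma;\tau)|$ depends only on $k=|\sigma|$ and $j=|\tau|$) rewrites it as
\[
    G_k(z) = z\sum_{j=0}^k \binom{k}{j} L_{k,j}\, G_{j+1}(z) + 1 \qquad (0 \le k \le d) ,
\]
where $L_{k,k} = \ell_k$, so the $G_{d+1}$-term disappears and $\mathcal{G}(z) = G_0(z) = \ell_0 z\, G_1(z) + 1$. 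Equation \eqref{eq: link sigma tau} gives $\ell_j = (k-j) + \sum_{r=j}^k \binom{k-j}{r-j} L_{k,r}$, and binomial inversion then yields a closed form for $L_{k,j}$ as an alternating binomial sum of the $\ell_r$ (with a correction of $1$ when $k-j=1$). After clearing the denominators $\ell_1\cdots\ell_{k-1}$ --- legitimate because $\ell_k > 0$ for $k < d$ --- the coefficients that appear are exactly the quantities $\binom{k}{j} N_{k,j}$ of the theorem.

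\emph{Solving the recurrence.} Passing to the variables $F_k(z) := \ell_1\cdots\ell_{k-1}\, G_k(z)$ (so $F_1 = G_1$) recasts the collapsed system as a linear system $(I - z B)\vec F = \vec c$, with $B$ an explicit lower-Hessenberg matrix assembled from the $N_{k,j}$, right-hand side $\vec c = (\ell_1\cdots\ell_{k-1})_{k}$, and $F_{d+1} = 0$. Solving it by forward elimination --- expressing each $F_{k+1}$ through $F_1,\dots,F_k$ and rescaling by $z^{k-1}$ to kill the spurious poles --- produces a genuine polynomial recurrence for the numerator and denominator polynomials, and an induction on the level $k$ identifies these with the chain sums $M_{i,j}$; equivalently $\det(I - zB) = \sum_i M_{i,0}z^i$ and the cofactors of the first column expand as $\sum_{i\ge p} M_{i,p}z^{i-1}$, since in a Hessenberg matrix the nonzero terms of the permutation expansion are indexed by partitions into consecutive blocks, which is precisely the index set in the definition of $M_{i,j}$. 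Cramer's rule for $F_1$ together with $\mathcal{G}(z) = 1 + \ell_0 z\, F_1(z)$ then assembles the stated rational function, the vector $\vec c$ producing the sum $\sum_{j} \ell_0\cdots\ell_{j-1} M_{i,j}$ in the numerator.

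The hard part will be this last step: matching the solution of the Hessenberg system --- the signs, binomial factors, index ranges, and the interplay between the "increment" transitions and the "drop" transitions $\sigma \mapsto (\Lk(v)\cap\sigma)\cup\{v\}$ --- against the intricate chain sums defining the $M_{i,j}$, and pushing the induction cleanly through both numerator and denominator. Everything before that (the Chomsky--Schützenberger step, the collapse to $d+1$ equations, and the binomial inversion for $L_{k,j}$) should be routine given the results already in the paper.
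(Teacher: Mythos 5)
Your proposal is correct and shares the paper's overall skeleton (Chomsky--Sch\"utzenberger via Theorem \ref{theorem: chomsky black magic}, collapse to $d+1$ equations via link-regularity, then solve the recurrence), but two of the three stages are executed by genuinely different means. For the collapse, you prove that $A_\sigma(z)$ depends only on $|\sigma|$ by applying Corollary \ref{corollary: constant corollary} with $\Gamma'=\Gamma$ and inducting on word length; this is valid, and it lets the system of Theorem \ref{theorem: chomsky black magic} collapse state by state. The paper never establishes this per-state invariance: instead it sums $\Delta_\sigma(z)$ over \emph{ordered} $m$-cliques and shows (Proposition \ref{proposition: supercounting}) that the multiplicity with which each term occurs in the aggregated sum is a constant $N_{m,k}$. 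Your route is shorter and recycles machinery already proved for Theorem \ref{theorem: main}; the paper's route avoids reasoning about individual states at the cost of the ordered-clique bookkeeping. For the coefficients, you obtain $L_{k,j}=|{\Lk(\sigma;\tau)}|$ by binomial inversion of the cardinality identity coming from (\ref{eq: link sigma tau}); this checks out (the inhomogeneous term $k-j$ contributes nothing after inversion unless $k-j=1$, which is exactly the extra $-1$ in $N_{k,k-1}$, and clearing the denominators $\ell_{j+1}\cdots\ell_{k-1}$ recovers $\binom{k}{j}N_{k,j}$). The paper instead derives the recurrence $N_{m,k}=\ell_{m-1}N_{m-1,k}-\ell_{k+1}N_{m,k+1}$ by a direct counting argument and solves it separately (Lemma \ref{lem: closed form lemma}); the inversion is arguably cleaner. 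The final stage is where you and the paper essentially coincide: your forward elimination with rescaling by powers of $z$ is Lemma \ref{lemma:recurrencepolynomials}, and the induction identifying the coefficients with the chain sums is Lemma \ref{lemma: solving polynomial recurrence}. Your determinantal reading ($\sum_i M_{i,0}z^i=\det(I-zB)$ for the Hessenberg matrix $B$, numerator from first-column cofactors) is a pleasant conceptual gloss but carries the same combinatorial content, namely that the surviving terms are indexed by decompositions into consecutive blocks $[s_r,t_r]$, which is the index set of $M_{i,j}$. You correctly flag this identification as the only part requiring sustained care; nothing in your outline would fail there.
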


Previously, the geodesic growth series of $\RACG(\Gamma)$ was only known for maximum clique size $d \leq 3$, proven in \cite{Ant12} and \cite{Ant13}. Detailed calculations using Theorem \ref{thm: main RACG theorem}, and a Sage program that automatically carries out these calculations, can be found in \cite{Mar23}. This program was used to compute the following formula when the maximum clique size is $d = 4$.

\begin{cor}
    If $\Gamma$ is a link-regular simplicial graph that does not contain $5$-cliques, the geodesic growth of $\RACG(\Gamma)$ is the rational function $q(z) / p(z)$, where
    \begin{align*}
        q(z) ={}
         & 24z^4 + (-\ell_1\ell_2\ell_3 + 4\ell_1\ell_2 + \ell_2\ell_3 - 12\ell_1 - 4\ell_2 - 2\ell_3 + 50)z^3
        \\
         & + (\ell_1\ell_2 + \ell_1\ell_3 + \ell_2\ell_3 - 7\ell_1 - 5\ell_2 - 3\ell_3 + 35)z^2 + (-\ell_1 - \ell_2 - \ell_3 + 10)z + 1,
        \\
        p(z) ={}
         & (\ell_0\ell_1\ell_2\ell_3 - 4\ell_0\ell_1\ell_2 + 12\ell_0\ell_1 - 24\ell_0 + 24)z^4
        \\
         & + (-\ell_0\ell_1\ell_2 - \ell_0\ell_1\ell_3 - \ell_0\ell_2\ell_3 - \ell_1\ell_2\ell_3 + 7\ell_0\ell_1 + 4\ell_0\ell_2 + 4\ell_1\ell_2 + 2\ell_0\ell_3
        \\
         & \phantom{+(} \ + \ell_2\ell_3 - 26\ell_0 - 12\ell_1 - 4\ell_2 - 2\ell_3 - 50)z^3
        \\
         & + (\ell_0\ell_1 + \ell_0\ell_2 + \ell_1\ell_2 + \ell_0\ell_3 + \ell_1\ell_3 + \ell_2\ell_3 - 9\ell_0 - 7\ell_1 - 5\ell_2 - 3\ell_3 + 35)z^2
        \\
         & + (-\ell_0 - \ell_1 - \ell_2 - \ell_3 + 10)z + 1. \tag*{\qed}
    \end{align*}
\end{cor}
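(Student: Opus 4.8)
The plan is to specialize Theorem~\ref{thm: main RACG theorem} to the case $d = 4$ and simplify. Since $\Gamma$ contains no $5$-cliques, its maximum clique size is $d \leq 4$; if $d < 4$ then some of the $\ell_k$ are effectively zero (more precisely, $\ell_k = 0$ once $k$ exceeds the maximum clique size, and the formulas degenerate accordingly), so it suffices to carry out the computation formally with $d = 4$ and the indeterminates $\ell_0, \ell_1, \ell_2, \ell_3$, the $d = 4$ case being the generic one from which the others follow by substitution.

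First I would compute the relevant quantities $N_{m,k}$ for $0 \le k \le m \le 4$ directly from the piecewise definition: the diagonal entries $N_{m,m} = 1$; the subdiagonal entries $N_{m,m-1} = \ell_{m-1} - \ell_m - 1$ for $m = 1,2,3,4$; and the remaining entries $N_{m,k}$ for $k < m-1$ from the product-times-alternating-sum formula (so $N_{2,0}$, $N_{3,0}$, $N_{3,1}$, $N_{4,0}$, $N_{4,1}$, $N_{4,2}$, noting that the convention $\ell_4$ appears but multiplies a quantity that vanishes in the relevant range, or is simply kept symbolic and found to cancel). Next I would assemble the $M_{i,j}$ for $0 \le j \le i \le 4$ via the nested-sum definition, enumerating for each pair $(i,j)$ the finitely many admissible chains $j \le s_1 < t_1 \le \dots \le s_n < t_n \le 4$ with $\sum (t_r - s_r) = i - j$, multiplying the corresponding binomial-and-$N$ products with sign $(-1)^n$, and collecting terms. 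In particular $M_{i,0}$ for $i = 0,\dots,4$ gives the denominator coefficients, and the combinations $\sum_{j=0}^i \ell_0 \cdots \ell_{j-1} M_{i,j}$ give the numerator coefficients.

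Then I would substitute these into the rational function $\mathcal{G}(z) = \bigl(\sum_{i=0}^4 (\sum_{j=0}^i \ell_0 \cdots \ell_{j-1} M_{i,j}) z^i\bigr) \big/ \bigl(\sum_{i=0}^4 M_{i,0} z^i\bigr)$, expand both polynomials into monomials in $\ell_0, \ell_1, \ell_2, \ell_3$, and verify they match $q(z)$ and $p(z)$ as stated. This is the step referenced in the text as being carried out by the Sage program of \cite{Mar23}: it is purely mechanical polynomial algebra, with the only subtlety being bookkeeping of which $\ell_k$-products survive and confirming (as a sanity check) that setting, e.g., $\ell_3 = 0$ recovers the known $d = 3$ formula of \cite{Ant13} and further setting $\ell_2 = 0$ recovers the $d = 2$ formula of \cite{Ant12}.

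\textbf{Main obstacle.} There is no conceptual difficulty — the result is a direct corollary, so the ``hard part'' is entirely the volume and error-proneness of the symbolic expansion: correctly enumerating all admissible chains in the definition of $M_{i,j}$ (the number of chains grows quickly, and it is easy to miss one or double-count), correctly tracking the $\ell_4$-terms and confirming they cancel, and keeping signs straight through the alternating sums inside $N_{m,k}$ and the $(-1)^n$ inside $M_{i,j}$. For this reason the computation is best delegated to a computer algebra system, which is precisely what is done; the proof of the corollary is then the statement that running that verification yields the displayed $q(z)$ and $p(z)$, together with the degeneration checks above confirming consistency with the previously known low-dimensional cases. \qed
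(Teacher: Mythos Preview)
Your proposal is correct and matches the paper's approach exactly: the corollary is obtained by specializing Theorem~\ref{thm: main RACG theorem} to $d = 4$ and carrying out the mechanical symbolic expansion, which the paper explicitly delegates to the Sage program of \cite{Mar23}. Your remarks about $\ell_4$ (it equals $0$ since $d=4$ is the maximum clique size) and the degeneration checks against the $d \le 3$ formulas are sensible additions, though the paper itself does not spell these out.
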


The remainder of this section is devoted to the proof of Theorem \ref{thm: main RACG theorem}. We fix the following notation. Let $\Gamma$, $d$, and $\ell_k$ be defined as in the statement of Theorem \ref{thm: main RACG theorem}. Let $S$ be the standard generating set of $\RACG(\Gamma)$. Let $\mathcal{C}$ denote the set of all cliques of $\Gamma$. Let $\widetilde{\mathcal{C}}_m$ denote the set of all \textit{ordered $m$-cliques} of $\Gamma$, i.e., tuples $(v_1, \dots, v_m)$ consisting of distinct vertices such that $\{v_1, \dots, v_m\}$ is a clique. The elements of $\widetilde{\mathcal{C}}_m$ will be denoted $\tilde{\sigma}$, where $\sigma$ is the underlying unordered clique.

We begin by using Theorem \ref{theorem: chomsky black magic} to obtain a system of equations that allows us to solve for the geodesic growth of $\RACG(\Gamma)$. Applying Proposition \ref{proposition: grammar rules} to the FSA $\mathcal{D}$ in Proposition \ref{proposition: racg fsa}, we obtain the following grammar that accepts the geodesic language of $\RACG(\Gamma)$: to each clique $\sigma \in \mathcal{C}$, we associate the grammar variable $\mathbf{\Delta}_\sigma$, and the production rules
\begin{align*}
    \mathbf{\Delta}_\sigma & \longrightarrow v \mathbf{\Delta}_{(\Lk(v) \cap \sigma) \cup \{v\}}
    \quad \text{(for $v \not\in \sigma$)}                                                       \\
    \mathbf{\Delta}_\sigma & \longrightarrow \varepsilon
\end{align*}
The start symbol is the variable $\mathbf{\Delta}_\varnothing$. Observe that the vertices not in $\sigma$ can be counted by first choosing a (possibly empty) subset $\tau \subseteq \sigma$, and then choosing a vertex in $\Lk(\sigma; \tau)$; more precisely, we have
\[
    V(\Gamma) \setminus \sigma
    = \bigsqcup_{\tau \subseteq \sigma} \Lk(\sigma; \tau).
\]
Hence, the above grammar rules can be equivalently written
\begin{equation}
    \label{eq: grammar}
    \begin{aligned}
        \mathbf{\Delta}_\sigma & \longrightarrow u \mathbf{\Delta}_{\tau \cup \{u\}}
        \quad \text{(for $\tau \subseteq \sigma$, $u \in \Lk(\sigma; \tau)$)}       \\
        \mathbf{\Delta}_\sigma & \longrightarrow \varepsilon
    \end{aligned}
\end{equation}
To be able to apply Theorem \ref{theorem: chomsky black magic}, we first need to check two conditions:

\begin{lem}
    \label{lemma: unambiguous}
    The grammar in (\ref{eq: grammar}) is unambiguous, and every variable is reachable.
\end{lem}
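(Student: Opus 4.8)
The plan is to verify the two hypotheses of Theorem~\ref{theorem: chomsky black magic} separately. For \emph{reachability}, I would show that for every clique $\sigma \in \mathcal{C}$, the variable $\mathbf{\Delta}_\sigma$ is reachable from the start variable $\mathbf{\Delta}_\varnothing$. The natural approach is induction on $|\sigma|$. The base case $\sigma = \varnothing$ is immediate. For the inductive step, write $\sigma = \tau \cup \{u\}$ where $|\tau| = |\sigma| - 1$; by induction $\mathbf{\Delta}_\tau$ is reachable, so it suffices to exhibit a production rule of the form $\mathbf{\Delta}_\tau \to u\,\mathbf{\Delta}_{\tau \cup \{u\}}$. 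Looking at~(\ref{eq: grammar}), such a rule exists precisely when $u \in \Lk(\tau; \tau)$, i.e.\ when $\Lk(u) \cap \tau = \tau$, which holds exactly because $\tau \cup \{u\} = \sigma$ is itself a clique (so every vertex of $\tau$ is adjacent to $u$) and $u \notin \tau$. Hence the rule is present and $\mathbf{\Delta}_\sigma$ is reachable.

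For \emph{unambiguity}, I would argue that any word $x = s_1 \cdots s_n$ in the geodesic language has a unique derivation from $\mathbf{\Delta}_\varnothing$. The key observation is that the grammar in~(\ref{eq: grammar}) is obtained from a genuine FSA (Proposition~\ref{proposition: racg fsa}) via the deterministic construction of Proposition~\ref{proposition: grammar rules}: the transition map $\delta$ is a \emph{function}, so from each state there is at most one $s$-transition for each letter $s$. Concretely, given $x$ in the language, the run of $\mathcal{D}$ on $x$ passes through a uniquely determined sequence of cliques $\varnothing = \sigma_0, \sigma_1, \dots, \sigma_n$ with $\sigma_{i} = \delta(\sigma_{i-1}, s_i) = (\Lk(s_i) \cap \sigma_{i-1}) \cup \{s_i\}$ (none of these can be $q_\mathrm{rej}$ since $x$ is accepted, being a geodesic). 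This sequence forces the derivation to be
\[
    \mathbf{\Delta}_\varnothing \Rightarrow s_1 \mathbf{\Delta}_{\sigma_1} \Rightarrow s_1 s_2 \mathbf{\Delta}_{\sigma_2} \Rightarrow \cdots \Rightarrow s_1 \cdots s_n \mathbf{\Delta}_{\sigma_n} \Rightarrow s_1 \cdots s_n,
\]
where the final step uses the rule $\mathbf{\Delta}_{\sigma_n} \to \varepsilon$. Since each variable in the chain and each applied rule is uniquely pinned down by $x$, the derivation is unique. One should also note that reading a rule $\mathbf{\Delta}_\sigma \to u\,\mathbf{\Delta}_{\tau \cup \{u\}}$ of~(\ref{eq: grammar}) against the original rule $\mathbf{\Delta}_\sigma \to v\,\mathbf{\Delta}_{(\Lk(v)\cap\sigma)\cup\{v\}}$ shows the two presentations coincide and that, for fixed $\sigma$ and fixed leading letter $u \notin \sigma$, there is exactly one such rule (the subclique $\tau$ is forced to be $\Lk(u) \cap \sigma$), so no ambiguity is introduced by the reindexing.

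I do not expect a serious obstacle here: both conditions follow more or less formally from the fact that the grammar comes from a deterministic automaton. The one point requiring a little care is making the unambiguity argument airtight despite the cosmetic rewriting of the production rules in~(\ref{eq: grammar})—one must check that the map $(\sigma, u) \mapsto \tau \cup \{u\}$ with $\tau = \Lk(u) \cap \sigma$ is the \emph{only} way to realize each original transition, so that counting derivations in the new grammar agrees with counting runs in $\mathcal{D}$. Once that bookkeeping is in place, unambiguity is just determinism of $\delta$ together with the fact that $\mathbf{\Delta}_\sigma \to \varepsilon$ is the unique terminating rule.
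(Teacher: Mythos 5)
Your proposal is correct and takes essentially the same route as the paper: unambiguity comes from the fact that for each variable $\mathbf{\Delta}_\sigma$ and each leading terminal $u$ there is at most one applicable production rule (i.e., determinism of $\delta$, with $\tau$ forced to equal $\Lk(u)\cap\sigma$), and reachability is shown by building the clique up one vertex at a time via the chain $\mathbf{\Delta}_\varnothing \Rightarrow v_1\mathbf{\Delta}_{\{v_1\}} \Rightarrow \cdots \Rightarrow v_1\cdots v_m\mathbf{\Delta}_\sigma$. Your version is somewhat more detailed (phrasing the induction explicitly and tracking the FSA run), but the substance is identical.
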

\begin{proof}
    We first show that (\ref{eq: grammar}) is unambiguous. Observe that, given any grammar variable $\mathbf{\Delta}_\sigma$ and some vertex $u \in V\Gamma$, there is at most one rule $\mathbf{\Delta}_\sigma \to \alpha$ where the right-hand side $\alpha$ contains $u$. This implies that, if the grammar generates some word $x = v_1 \cdots v_n \in S^*$, then there is only one possible production rule that could have been applied at each step. Hence, $x$ has a unique derivation from the start symbol, as required.
    
    For the second claim, fix a variable $\mathbf{\Delta}_\sigma$ for some clique $\sigma = \{v_1, \dots, v_m\}$. Then we have the derivation
    \[
        \mathbf{\Delta}_\varnothing
        \Rightarrow v_1\mathbf{\Delta}_{\{v_1\}}
        \Rightarrow v_1v_2\mathbf{\Delta}_{\{v_1, v_2\}}
        \cdots
        \Rightarrow v_1 \cdots v_n\mathbf{\Delta}_{\{v_1, \dots, v_n\}},
    \]
    which proves that $\mathbf{\Delta}_\sigma$ is reachable.
\end{proof}

Now, we may apply Theorem \ref{theorem: chomsky black magic} to obtain the equations
\begin{equation}
    \label{eq: bad system}
    \Delta_{\sigma}(z) = 1 + z\sum_{\tau \subseteq {\sigma}} \sum_{u \in \Lk(\sigma; \tau)}\Delta_{\tau \cup \{u\}}(z)
\end{equation}
for all $\sigma \in \mathcal{C}$, where $\Delta_\sigma(z)$ denotes the growth series of the language consisting of all words that can be derived from $\mathbf{\Delta}_\sigma$. To simplify this system, we introduce the power series
\[
    \mathcal{G}_m(z) = \sum_{\tilde{\sigma} \in \widetilde{\mathcal{C}}_m} \Delta_\sigma(z).
\]
We want to convert the system of equations (\ref{eq: bad system}), which is written in terms of the power series $\Delta_\sigma(z)$, into a system of equations written in terms of the power series $\mathcal{G}_m(z)$. The following proposition is the first step in doing so.

\begin{prop}
    We have
    \[
        \mathcal{G}_0(z) = 1 + z\mathcal{G}_1(z)
    \]
    and
    \[
        \mathcal{G}_m(z)
        = \ell_0\dotsb\ell_{m-1} + \sum_{k=0}^m \binom{m}{k} \sum_{(v_1,\dotsc,v_m) \in \widetilde{\mathcal{C}}_m} \sum_{u \in \Lk(\{v_1,\dotsc,v_m\}; \{v_1,\dotsc,v_k\})} \Delta_{\{v_1,\dotsc,v_k,u\}}(z).
    \]
    The geodesic growth series of $\RACG(\Gamma)$ is then given by $\mathcal{G}_0(z)$.
\end{prop}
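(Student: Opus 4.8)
The plan is to derive both identities by reorganizing the system (\ref{eq: bad system}), with link-regularity entering only to evaluate the constant terms. First note that $\widetilde{\mathcal{C}}_0$ is the singleton consisting of the empty tuple, whose underlying clique is $\varnothing$; hence $\mathcal{G}_0(z) = \Delta_\varnothing(z)$. Since $\mathbf{\Delta}_\varnothing$ is the start symbol of the grammar (\ref{eq: grammar}), the language it derives is the geodesic language of $\RACG(\Gamma)$, so $\mathcal{G}_0(z) = \Delta_\varnothing(z)$ is the geodesic growth series of $\RACG(\Gamma)$; this is the final assertion. For the identity $\mathcal{G}_0(z) = 1 + z\mathcal{G}_1(z)$, I specialize (\ref{eq: bad system}) to $\sigma = \varnothing$: the only subclique is $\tau = \varnothing$, and $\Lk(\varnothing;\varnothing) = \{v \in V\Gamma \setminus \varnothing \mid \Lk(v) \cap \varnothing = \varnothing\} = V\Gamma$, so $\Delta_\varnothing(z) = 1 + z\sum_{v\in V\Gamma}\Delta_{\{v\}}(z) = 1 + z\mathcal{G}_1(z)$, the last equality because $\widetilde{\mathcal{C}}_1 = \{(v) \mid v \in V\Gamma\}$.

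For the general formula I sum (\ref{eq: bad system}) over all $\tilde{\sigma} \in \widetilde{\mathcal{C}}_m$, writing $\sigma$ for the underlying unordered clique of $\tilde{\sigma}$. The constant terms contribute $|\widetilde{\mathcal{C}}_m|$, which I compute by choosing an ordered $m$-clique one vertex at a time: there are $\ell_0 = |V\Gamma|$ choices of $v_1$, and given a $j$-clique $\{v_1,\dots,v_j\}$ the next vertex $v_{j+1}$ ranges over $\Lk(\{v_1,\dots,v_j\})$, which by link-regularity has exactly $\ell_j$ elements no matter which $j$-clique was chosen; hence $|\widetilde{\mathcal{C}}_m| = \ell_0 \cdots \ell_{m-1}$. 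The remaining terms equal $z$ times $\sum_{\tilde{\sigma}\in\widetilde{\mathcal{C}}_m}\sum_{\tau\subseteq\sigma}\sum_{u\in\Lk(\sigma;\tau)}\Delta_{\tau\cup\{u\}}(z)$. I group the sum over $\tau$ by its size $k$. For a fixed $\tilde{\sigma} = (v_1,\dots,v_m)$, a size-$k$ subclique $\tau\subseteq\sigma$ is the same datum as a $k$-element index set $I\subseteq\{1,\dots,m\}$ via $\tau = \{v_i \mid i\in I\}$, so the sum over pairs $(\tilde{\sigma},\tau)$ with $|\tau|=k$ becomes a sum over $I$ and over $\tilde{\sigma}$. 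For each such $I$, a permutation of coordinates carrying $I$ onto $\{1,\dots,k\}$ induces a bijection of $\widetilde{\mathcal{C}}_m$; the summand $\sum_{u\in\Lk(\sigma;\tau)}\Delta_{\tau\cup\{u\}}(z)$ depends only on the set $\sigma$ and on $\tau$ as a set, not on how the remaining coordinates are ordered, so it is unchanged, and each of the $\binom{m}{k}$ index sets $I$ contributes the same quantity $\sum_{(v_1,\dots,v_m)\in\widetilde{\mathcal{C}}_m}\sum_{u\in\Lk(\sigma;\{v_1,\dots,v_k\})}\Delta_{\{v_1,\dots,v_k,u\}}(z)$. Summing over $0 \le k \le m$ yields the claimed expression.

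The only step requiring genuine care is this reorganization of the double sum over $(\tilde{\sigma},\tau)$ into $\binom{m}{k}$ identical copies: one must verify that passing from $(\tilde{\sigma},\tau)$ to the index set $I$ is a bijection onto $\binom{m}{k}$ copies of $\widetilde{\mathcal{C}}_m$, and that the summand really is invariant under the coordinate permutations used, i.e. that it sees $\tilde{\sigma}$ only through the pair $(\sigma, \tau)$. The counting of ordered cliques and the $\sigma = \varnothing$ computation are routine, and link-regularity is invoked only to make $\ell_0\cdots\ell_{m-1}$ a well-defined count.
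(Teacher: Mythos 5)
Your proposal is correct and follows essentially the same route as the paper: sum (\ref{eq: bad system}) over ordered $m$-cliques, count $|\widetilde{\mathcal{C}}_m| = \ell_0\cdots\ell_{m-1}$ via link-regularity, convert subcliques to index sets, and use coordinate permutations of $\widetilde{\mathcal{C}}_m$ to collapse the $\binom{m}{k}$ index sets into identical copies. (You also correctly retain the factor of $z$ multiplying the triple sum, which is inadvertently dropped in the displayed statement.)
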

\begin{proof}
    First, we derive the equation for $\mathcal{G}_0(z)$. Because the only $0$-clique is the empty clique, we have $\mathcal{G}_0(z) = \Delta_\varnothing(z)$. In particular, since $\mathbf{\Delta}_\varnothing$ is the start symbol, $\mathcal{G}_0(z) = \Delta_\varnothing(z)$ is the geodesic growth series of $\RACG(\Gamma)$. By (\ref{eq: bad system}), we get
    \begin{align*}
        \mathcal{G}_0(z)
         & = 1 + z\sum_{u \in \Lk(\varnothing; \varnothing)} \Delta_{\{u\}}(z) \\
         & = 1 + z\sum_{u \in V(\Gamma)} \Delta_{\{u\}}(z)                     \\
         & = 1 + z\mathcal{G}_1(z),
    \end{align*}
    as required.

    Next, we derive the equation for $\mathcal{G}_m(z)$ when $1 \leq m \leq d$. Expanding the summation in (\ref{eq: bad system}), we get
    \[
        \Delta_{\sigma}(z) = 1 + z\sum_{k = 0}^m\sum_{\substack{\tau \subseteq {\sigma} \\ |\tau| = k}} \sum_{u \in \Lk(\sigma; \tau)}\Delta_{\tau \cup \{u\}}(z)
    \]
    for all $\sigma \in \mathcal{C}$. Summing over all ordered cliques in $\widetilde{\mathcal{C}}_m$, we get
    \[
        \mathcal{G}_m(z) = |\widetilde{\mathcal{C}}_m|
        + z\sum_{k = 0}^m\sum_{\tilde{\sigma} \in \widetilde{\mathcal{C}}_m}\sum_{\substack{\tau \subseteq {\sigma} \\ |\tau| = k}} \sum_{u \in \Lk(\sigma; \tau)}
        \Delta_{\tau \cup \{u\}}(z).
    \]
    Now, we determine the size of $\widetilde{\mathcal{C}}_m$. Since an ordered $1$-clique is just a vertex, we have $|\widetilde{\mathcal{C}}_1| = |V(\Gamma)| = \ell_0$. For $m > 0$, every ordered clique of size $m$ is obtained by taking an ordered $(m - 1)$-clique $\tilde{\sigma}$ and appending a vertex $v \in \Lk(\sigma)$. There are $\ell_{m - 1} = |\Lk(\sigma)|$ choices for $v$, and so $|\widetilde{\mathcal{C}}_m| = \ell_{m - 1}|\widetilde{\mathcal{C}}_m|$. By induction, we get $|\widetilde{\mathcal{C}}_m| = \ell_0 \cdots \ell_{m - 1}$. Thus, we obtain the equation
    \begin{equation}
        \label{eq:sumaftercountingorderedcliques}
        \mathcal{G}_m(z)
        = \ell_0\dotsb\ell_{m-1} + z\sum_{k = 0}^m \sum_{\tilde{\sigma} \in \widetilde{\mathcal{C}}_m}\sum_{\substack{\tau \subseteq \sigma \\ |\tau| = k}} \sum_{u \in \Lk(\sigma; \tau)} \Delta_{\tau \cup \{u\}}(z).
    \end{equation}

    Next, we want to interchange the second and third sums in (\ref{eq:sumaftercountingorderedcliques}). However, the third sum currently depends on the second sum, so we must first remove this dependency. Let $[m] = \{1, \dots, m\}$, and fix an arbitrary ordered $m$-clique $\tilde{\sigma} = (v_1,\dotsc,v_m)$. Given a subset $T = \{i_1,\dotsc,i_k\} \subseteq [m]$ of size $k$, define $\pi_T(\tilde{\sigma})$ to be the subclique $\{v_{i_1},\dotsc,v_{i_k}\}$ of size $k$. Observe that $\pi_T$ defines a bijection between the $k$-element subsets of $[m]$ and the subcliques of $\sigma$ of size $k$. Thus, changing variables in (\ref{eq:sumaftercountingorderedcliques}), we obtain the equation
    \[
        \mathcal{G}_m(z) = \ell_0\dotsb\ell_{m-1} + \sum_{k=0}^m \sum_{\tilde{\sigma} \in \widetilde{\mathcal{C}}_m} \sum_{\substack{T \subseteq [m]\\ |T| = k}} \sum_{u \in \Lk(\sigma; \pi_T(\tilde{\sigma}))} \Delta_{\pi_T(\tilde{\sigma}) \cup \{u\}}(z).
    \]
    Now, the third sum no longer depends on the second sum, so we can write
    \begin{equation}
        \label{eq:sumafterswap}
        \mathcal{G}_m(z) = \ell_0\dotsb\ell_{m-1} + \sum_{k=0}^m \sum_{\substack{T \subseteq [m]\\ |T| = k}} \sum_{\tilde{\sigma} \in \widetilde{\mathcal{C}}_m} \sum_{u \in \Lk(\sigma; \pi_T(\tilde{\sigma}))} \Delta_{\pi_T(\tilde{\sigma}) \cup \{u\}}(z).
    \end{equation}

    Finally, we show that the second sum in (\ref{eq:sumafterswap}) can be collapsed. Fix $0 \leq k \leq m$. We claim that if $T,T' \subseteq [1\dotsc m]$ with $|T| = |T'| = k$, then
    \begin{equation}
        \label{eq:claim for collapsing sum}
        \sum_{\tilde{\sigma} \in \widetilde{\mathcal{C}}_m} \sum_{u \in \Lk(\sigma; \pi_T(\tilde{\sigma}))} \Delta_{\pi_{T}(\tilde{\sigma}) \cup \{u\}}(z) = \sum_{\tilde{\sigma} \in \widetilde{\mathcal{C}}_m} \sum_{u \in \Lk(\sigma; \pi_{T'}(\tilde{\sigma}))} \Delta_{\pi_{T'}(\tilde{\sigma}) \cup \{u\}}(z).
    \end{equation}
    Write $T = \{i_1,\dotsc,i_k\}$ and $T'= \{j_1,\dotsc,j_k\}$. There is a permutation $p: \{1,\dotsc,m\} \rightarrow \{1,\dotsc,m\}$ such that $i_\ell \mapsto j_\ell$ for $1 \leq \ell \leq k$. Then $p$ induces a bijection $\tilde{p} : \widetilde{\mathcal{C}}_m \rightarrow \widetilde{\mathcal{C}}_m$ by
    \[
        \tilde{\sigma} = (v_1,\dotsc,v_m) \mapsto (v_{p(1)},\dotsc,v_{p(m)}) =: \tilde{p}(\tilde{\sigma}).
    \]
    Observe that the map $\tilde{p}$ satisfies $\pi_T(\tilde{p}(\tilde{\sigma})) = \{v_{j_1},\dotsc,v_{j_k}\} = \pi_{T'}(\tilde{\sigma})$. Also, the underlying clique of $\tilde{p}(\tilde{\sigma})$ is just $\sigma$ itself. Therefore, equation (\ref{eq:claim for collapsing sum}) follows from applying the change of variables $\tilde{p}$ to the outer sum. Applying this result with $T' = \{1, \dots, k\}$, we get
    \begin{align*}
         & \sum_{\substack{T \subseteq [m]
        \\ |T| = k}} \sum_{\tilde{\sigma} \in \widetilde{\mathcal{C}}_m} \sum_{u \in \Lk(\sigma; \pi_T(\tilde{\sigma}))} \Delta_{\pi_T(\tilde{\sigma}) \cup \{u\}}(z) \\
         & \qquad = \binom{m}{k} \sum_{(v_1,\dotsc,v_m) \in \widetilde{\mathcal{C}}_m} \sum_{u \in \Lk(\{v_1,\dotsc,v_m\}; \{v_1,\dotsc,v_k\})} \Delta_{\{v_1,\dotsc,v_k,u\}}(z).
    \end{align*}
    Substituting this into (\ref{eq:sumafterswap}), we obtain the required formula for $\mathcal{G}_m(z)$.
\end{proof}

Given an ordered clique $(v_1,\dotsc,v_k,u) \in \mathcal{C}_{k+1}$, we define the power series $\Delta_{(v_1,\dotsc,v_k,u)}(z)$ to equal the power series $\Delta_{\{v_1,\dotsc,v_k,u\}}(z)$. We do this to improve notation in order to keep track of the ordering for Proposition \ref{proposition: supercounting}. Making use of this change converts our system into the equivalent form
\begin{equation}
    \label{eq: precountingsystem}
    \mathcal{G}_m(z) = \ell_0\dotsb\ell_{m-1} + \sum_{k=0}^m \binom{m}{k} \sum_{(v_1,\dotsc,v_m) \in \widetilde{\mathcal{C}}_m} \sum_{u \in \Lk(\{v_1,\dotsc,v_m\}; \{v_1,\dotsc,v_k\})} \Delta_{(v_1,\dotsc,v_k,u)}(z)
\end{equation}
for $1 \leq m \leq d$.

\begin{prop}
    \label{proposition: supercounting}
    For $1 \leq m \leq d$, $0 \leq k \leq m$, and an arbitrary ordered clique $(v_1,\dotsc,v_k,u)$, the number of times $\Delta_{(v_1,\dotsc,v_k,u)}(z)$ appears in the sum
    \begin{equation}
        \label{eq: sumtocount}
        \sum_{(v_1,\dotsc,v_m) \in \widetilde{\mathcal{C}}_m} \sum_{u \in \Lk(\{v_1,\dotsc,v_m\}; \{v_1,\dotsc,v_k\})} \Delta_{(v_1,\dotsc,v_k,u)}(z)
    \end{equation}
    is an integer $N_{m, k}$ that is independent of the vertices $v_1, \dots, v_k, u$. Furthermore, the integers $N_{m, k}$ satisfy the following properties:
    \begin{itemize}
        \item $N_{m,m} = 1$.
        \item $N_{m, m-1} = \ell_{m-1} - \ell_m - 1$.
        \item If $k < m-1$, then $N_{m, k}$ satisfies the recurrence
        \[
            N_{m,k} = \ell_{m-1}N_{m-1,k} - \ell_{k+1}N_{m,k+1}.
        \]
    \end{itemize}
\end{prop}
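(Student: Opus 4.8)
The plan is to reinterpret $N_{m,k}$ as a concrete count of clique extensions, and then to prove its well-definedness together with the three listed properties by induction on $m-k$. First I would unwind the definition: with $(v_1,\dots,v_k,u)$ a fixed ordered $(k+1)$-clique, the term $\Delta_{(v_1,\dots,v_k,u)}(z)$ occurs in the sum (\ref{eq: sumtocount}) once for each way of choosing $v_{k+1},\dots,v_m$ making $(v_1,\dots,v_m)$ an ordered $m$-clique with $u\in\Lk(\{v_1,\dots,v_m\};\{v_1,\dots,v_k\})$. Since $u$ is adjacent to and distinct from $v_1,\dots,v_k$, that last condition merely says each new vertex $v_j$ with $k<j\le m$ is neither equal nor adjacent to $u$. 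Writing $\sigma=\{v_1,\dots,v_k\}$, the quantity to compute is therefore the number of ordered $(m-k)$-tuples of distinct, pairwise adjacent vertices of $\Lk(\sigma)$ all lying outside $\{u\}\cup\Lk(u)$; equivalently, the number of ordered $(m-k)$-cliques in the subgraph induced on $\Lk(\sigma)\setminus(\{u\}\cup\Lk(u))$. (When $k=0$ one reads $\sigma=\varnothing$, $\Lk(\varnothing)=V\Gamma$, $\ell_0=\abs{V\Gamma}$.)

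Two base cases are immediate. If $m=k$, the empty tuple is the only one, so $N_{m,m}=1$. If $m=k+1$, the count is $\abs{\Lk(\sigma)\setminus(\{u\}\cup\Lk(u))}$ with $\abs{\sigma}=m-1$; here $u\in\Lk(\sigma)$, the set $\Lk(\sigma)\cap\Lk(u)=\Lk(\sigma\cup\{u\})$ misses $u$, and link-regularity gives $\abs{\Lk(\sigma)}=\ell_{m-1}$ and $\abs{\Lk(\sigma\cup\{u\})}=\ell_m$ since $\sigma$ and $\sigma\cup\{u\}$ are cliques of sizes $m-1$ and $m$. Hence $N_{m,m-1}=\ell_{m-1}-\ell_m-1$, which visibly does not depend on the chosen vertices.

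For the inductive step I would take $k<m-1$ and assume $N_{m-1,k}$ and $N_{m,k+1}$ are already well-defined and satisfy the stated relations. Among the ordered $m$-clique extensions $(v_{k+1},\dots,v_m)$ of $(v_1,\dots,v_k)$, consider those whose first $m-1-k$ new vertices $v_{k+1},\dots,v_{m-1}$ avoid $\{u\}\cup\Lk(u)$; there are $\ell_{m-1}N_{m-1,k}$ of these, since choosing $(v_{k+1},\dots,v_{m-1})$ is counted by $N_{m-1,k}$ and then $v_m$ runs over the link of an $(m-1)$-clique, which has $\ell_{m-1}$ elements by link-regularity. Split these extensions according to whether $v_m$ also avoids $\{u\}\cup\Lk(u)$: the affirmative part has size $N_{m,k}$ by the reinterpretation above. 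In the negative part, since $k<m-1$ the vertex $v_{k+1}$ lies outside $\Lk(u)$ while being adjacent to $v_m$, which forces $v_m\ne u$; thus $v_m\in\Lk(u)$, and in fact $v_m\in\Lk(\sigma)\cap\Lk(u)=\Lk(\sigma\cup\{u\})$. The assignment $(v_{k+1},\dots,v_m)\mapsto\bigl(v_m,(v_{k+1},\dots,v_{m-1})\bigr)$ is then a bijection from the negative part onto the set of pairs consisting of a vertex $v_m\in\Lk(\sigma\cup\{u\})$ together with an ordered $m$-clique extension $(v_{k+1},\dots,v_{m-1})$ of the ordered $(k+1)$-clique $(v_1,\dots,v_k,v_m)$ whose new vertices avoid $\{u\}\cup\Lk(u)$ — the inverse simply reinserts $v_m$ at the end. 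Because $u$ is a common neighbor of $\{v_1,\dots,v_k,v_m\}$ distinct from those vertices, $(v_1,\dots,v_k,v_m,u)$ is a legitimate ordered $(k+2)$-clique, so for each of the $\ell_{k+1}=\abs{\Lk(\sigma\cup\{u\})}$ choices of $v_m$ the number of such extensions is $N_{m,k+1}$ by the inductive hypothesis. Hence the negative part has size $\ell_{k+1}N_{m,k+1}$, and equating counts gives $N_{m,k}=\ell_{m-1}N_{m-1,k}-\ell_{k+1}N_{m,k+1}$; in particular $N_{m,k}$ does not depend on the chosen vertices, closing the induction.

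The main obstacle is the bijection in the inductive step: one must check that pulling the last new vertex $v_m$ out and promoting it into the clique genuinely yields a configuration counted by $N_{m,k+1}$ — crucially that $u$ survives as an admissible excluded common neighbor of the enlarged clique $\{v_1,\dots,v_k,v_m\}$, and that the inequality $v_m\ne u$ is precisely what the hypothesis $k<m-1$ provides. A subtler point is that well-definedness cannot be proved separately from the recurrence, since the recurrence's very statement presumes that $N_{m-1,k}$ and $N_{m,k+1}$ depend only on their indices; this is why the induction must carry both assertions simultaneously.
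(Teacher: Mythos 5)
Your proof is correct and follows essentially the same route as the paper: reinterpret the multiplicity as the number of ordered clique extensions avoiding $\{u\}\cup\Lk(u)$, verify the two base cases directly, and obtain the recurrence $N_{m,k}=\ell_{m-1}N_{m-1,k}-\ell_{k+1}N_{m,k+1}$ by relaxing the avoidance condition on one distinguished new vertex and subtracting the overcount of size $\ell_{k+1}N_{m,k+1}$, using $k<m-1$ to rule out $u$ itself. The only (cosmetic) difference is that you distinguish the last new vertex $v_m$ while the paper distinguishes the first new vertex $v_{k+1}$, and your observation that independence and the recurrence must be carried through the induction together matches the paper's treatment.
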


\begin{proof}
    Fix indices $m, k$ and an ordered clique $(v_1, \dots, v_k, u)$. Let $\tilde{\tau} = (v_1,\dotsc,v_k)$. The term $\Delta_{(v_1,\dotsc,v_k,u)}(z)$ appears once in the sum (\ref{eq: sumtocount}) for each ordered clique $\tilde{\sigma} \in \widetilde{\mathcal{C}}_m$ where $\sigma \supseteq \tau$ and $u \in \Lk(\sigma; \tau)$. Thus, we want to count the number of possible ordered cliques $\tilde{\sigma}$.

    Each such $\tilde{\sigma}$ is given by choosing $m - k$ vertices $v_{k+1},\dotsc,v_m \in \Lk(\{v_1,\dotsc,v_k\})$ so that $(v_1,\dotsc,v_m)$ forms an ordered clique, with the additional restriction that $v_{k+1},\dotsc,v_{m} \notin \Lk(u)$ (cf. Figure \ref{fig:countingcliques1}).
    \begin{figure}
        \centering
        \includegraphics[scale=1]{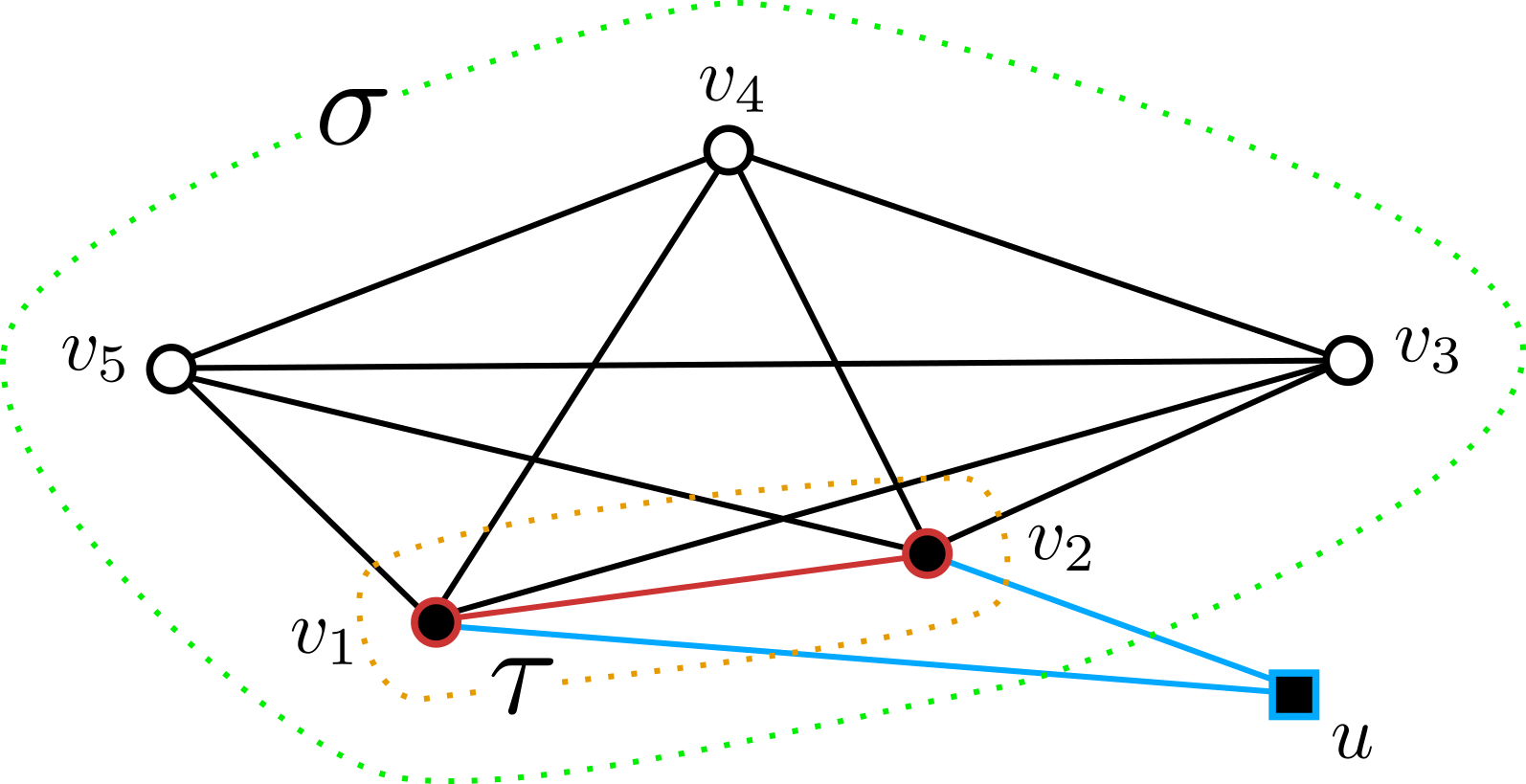}
        \caption{A picture showcasing $\tilde{\tau} = (v_1,\dotsc,v_k)$, $\tilde{\sigma} = (v_1,\dotsc,v_m)$, and $u$. The solid black vertices are fixed, and the open circles are what is free to be chosen. The vertices $\{v_1,\dotsc,v_m\}$ form a clique and $u$ is not connected to $v_{k+1},\dotsc,v_m$ (here, $k = 2$ and $m = 5$).}
        \label{fig:countingcliques1}
    \end{figure}
    Let $N_{m,k}(\tilde{\tau}, u)$ be the number of ways to choose $m-k$ vertices in this way. We distinguish three cases.
    
    \medskip
    
    \noindent\textit{Case 1 ($k = m$).} Here, $N_{m,m}(\tilde{\tau}, u)$ is the number of ways to choose $0$ vertices such that $(v_1, \dots, v_m)$ is an ordered clique. Hence, $N_{m, m}(\tilde{\tau}, u) = 1$ and is independent of $\tilde{\tau}$ and $u$, so we may write $N_{m, m} = N_{m, m}(\tilde{\tau}, u)$.
    
    \medskip
    
    \noindent\textit{Case 2 ($k = m - 1$).} There are $\ell_{m - 1} - 1$ ways to choose $v_m \in \Lk(\{v_1, \dots, v_{m - 1}\})$ with $v_m \neq u$ to obtain an ordered clique $\tilde{\sigma} = (v_1, \dots, v_m)$. We want $v_m \not\in \Lk(u)$, so we subtract off the $\ell_m$ ways to choose $v_m \in \Lk(\{v_1, \dots, v_{m - 1}, u\})$. This shows $N_{m, m - 1}(\tilde{\tau}, u) = \ell_{m - 1} - \ell_m - 1$ and is independent of $\tilde{\tau}$ and $u$, so we may write $N_{m, m - 1} = N_{m, m - 1}(\tilde{\tau}, u)$.
    
   \begin{figure}
        \centering
        \includegraphics[scale=0.9]{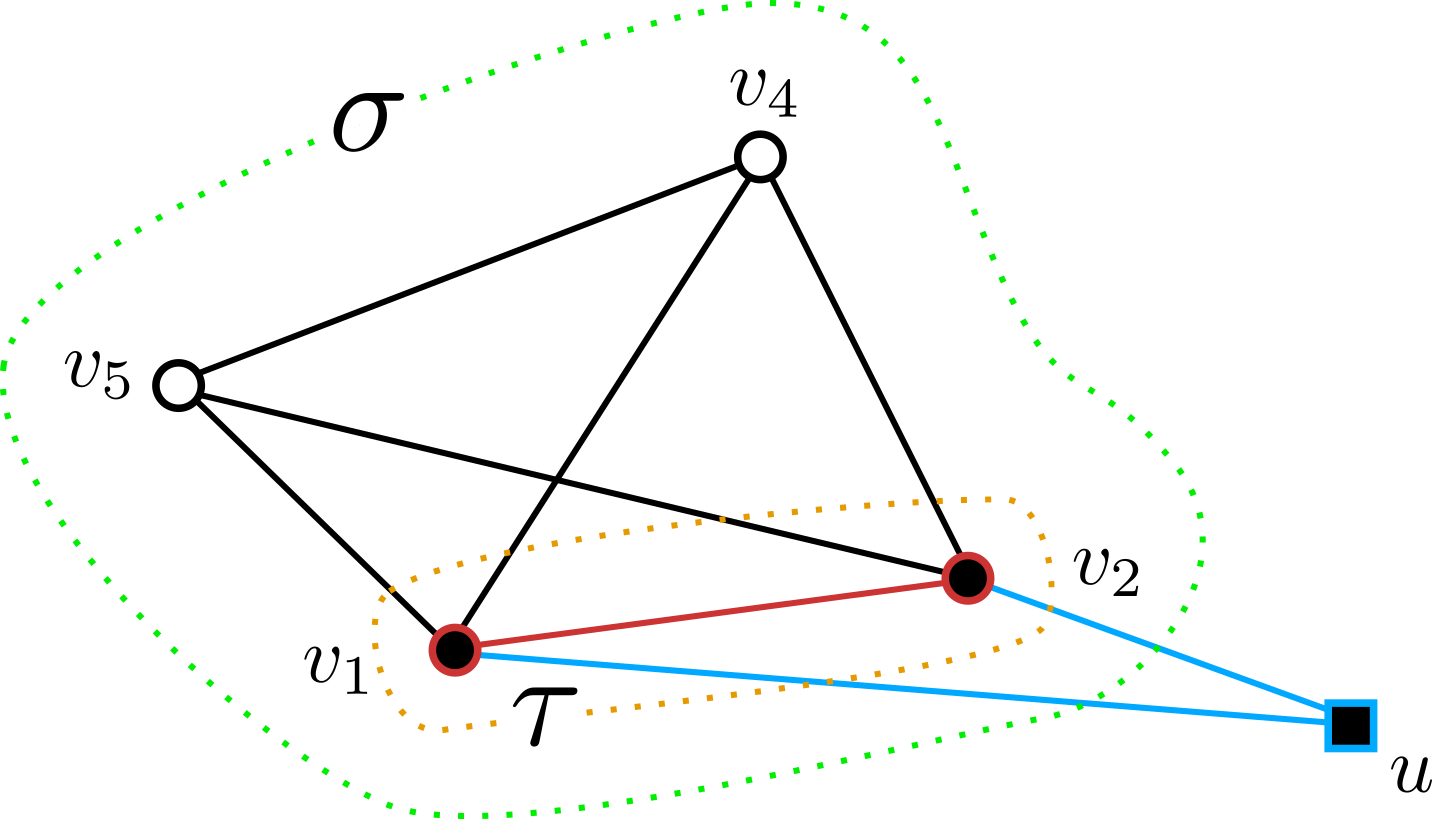}
        \includegraphics[scale=0.9]{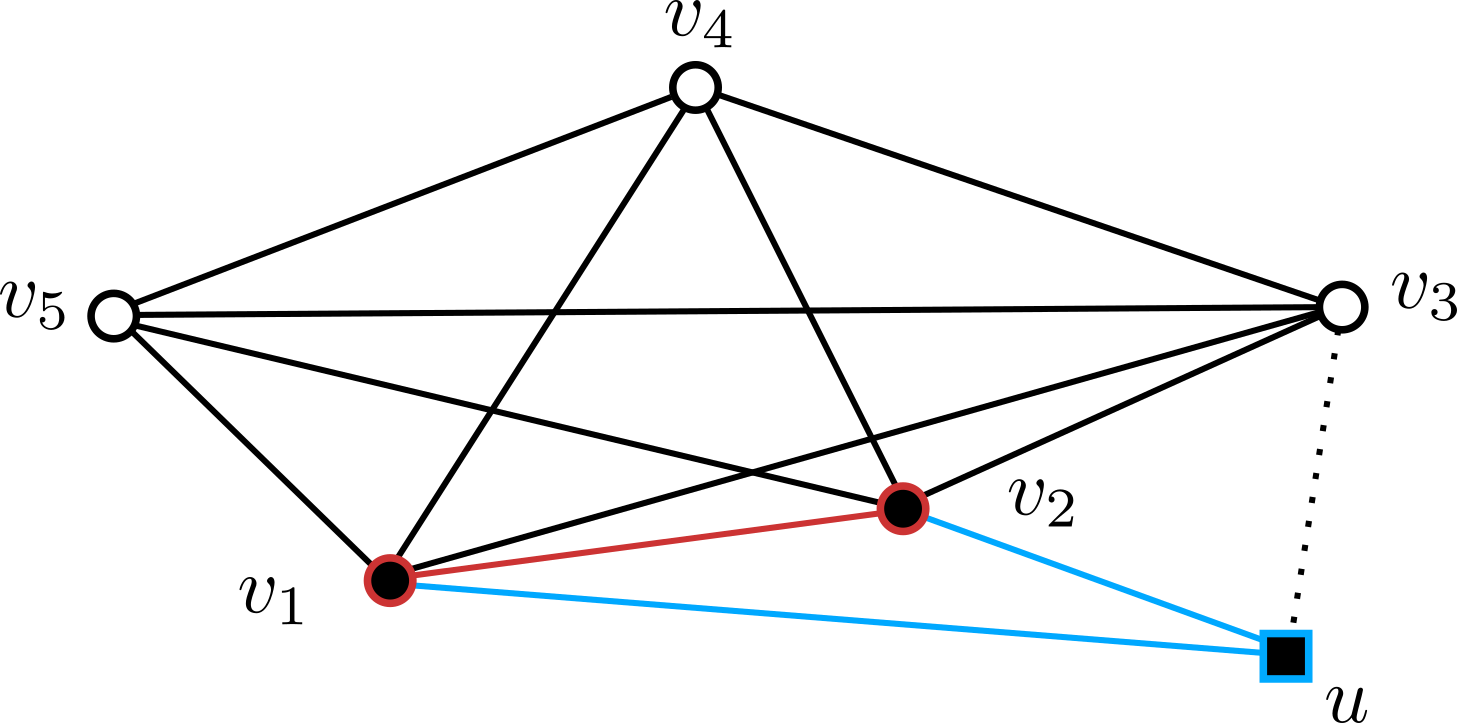}
        \caption{The number of ways to choose $v_{k+2},\dotsc,v_m$ to obtain the figure on the left is given by $N_{m-1,k}(\tilde{\tau}, u)$. Choosing $v_{k+1} \in \Lk(\{v_1,\dotsc,v_{k}, v_{k+2}, \dotsc,v_m\})$ gives the figure on the right, and there are $\ell_{m-1}$ ways to choose such a $v_k$. The dotted line means $v_k$ may or may not share an edge with $u$.}
        \label{fig:countingcliques2and3}
    \end{figure}
    
    \medskip
    
    \noindent\textit{Case 3 ($k < m - 1$).} First, we count the number of ways to choose $v_{k + 1}, \dots, v_m$ such that $v_{k + 2}, \dots, v_m \not\in \Lk(u)$ and $(v_1, \dots, v_m)$ forms an ordered clique (in particular, we are allowing $v_{k + 1} \in \Lk(u)$). This can be done by counting the number of ways to complete the following two steps (cf. Figure \ref{fig:countingcliques2and3}):
    \begin{itemize}
        \item[($i$)] Choose vertices $v_{k+2},\dotsc,v_m \notin \Lk(\{u\})$ such that $(v_1,\dotsc, v_{k}, v_{k + 2}, \dotsc,v_m)$ forms an ordered clique.
        \item[($ii$)] Choose a vertex $v_{k + 1} \in \Lk(\{v_1, \dotsc, v_{k}, v_{k + 2}, \dotsc,v_m\})$ to obtain an ordered clique $\tilde{\sigma} = (v_1,\dotsc,v_m)$.
    \end{itemize}
    By definition, there are $N_{m-1,k}(\tilde{\tau}, u)$ ways to complete ($i$) and $\ell_{m-1}$ ways to complete ($ii$), so there are $\ell_{m-1}N_{m-1,k}(\tilde{\tau}, u)$ ways to complete both ($i$) and ($ii$). 
    
    We have overcounted by precisely the cases where $v_{k + 1} \in \Lk(u)$. Thus, we must subtract off the number of ways to choose $v_{k + 1}, \dots, v_m$ such that $v_{k + 1} \in \Lk(u)$, $v_{k + 2}, \dots, v_m \not\in \Lk(u)$, and $(v_1, \dots, v_m)$ forms an ordered clique. This can be done by counting the number of ways of to complete the following two steps (cf. Figure \ref{fig:countingcliques4and5}):
    \begin{itemize}
        \item[($iii$)] Choose $v_{k+1} \in \Lk(\{v_1,\dotsc,v_k,u\})$, so that $(v_1, \dots, v_{k + 1})$ is an ordered clique and $v_{k + 1} \in \Lk(u)$.
        \item[($iv$)] Choose vertices $v_{k+2},\dotsc,v_m \notin \Lk(u)$ such that $(v_1,\dotsc,v_m)$ forms an ordered clique.
    \end{itemize}
    There are $\ell_{k + 1}$ ways to complete ($iii$) and $N_{m, k + 1}((v_1, \dots, v_{k + 1}), u)$ to complete ($iv$), so there are $\ell_{k + 1}N_{m, k + 1}((v_1, \dots, v_{k + 1}), u)$ ways to complete both ($iii$) and ($iv$).
    
    From this, we obtain the recurrence relation
    \[
        N_{m, k}(\tilde{\tau}, u)
        = \ell_{m - 1}N_{m - 1, k}(\tilde{\tau}, u)
        - \ell_{k + 1}N_{m, k + 1}((v_1, \dots, v_{k + 1}), u).
    \]
    In Case 1 and Case 2, we have shown that the base cases of this recurrence are independent of $\tilde{\tau}$ and $u$. Therefore, $N_{m, k}(\tilde{\tau}, u)$ is independent of $\tilde{\tau}$ and $u$. Writing $N_{m, k} = N_{m, k}(\tilde{\tau}, u)$, we obtain the required formula.
    \begin{figure}
        \centering
        \includegraphics[scale=0.9]{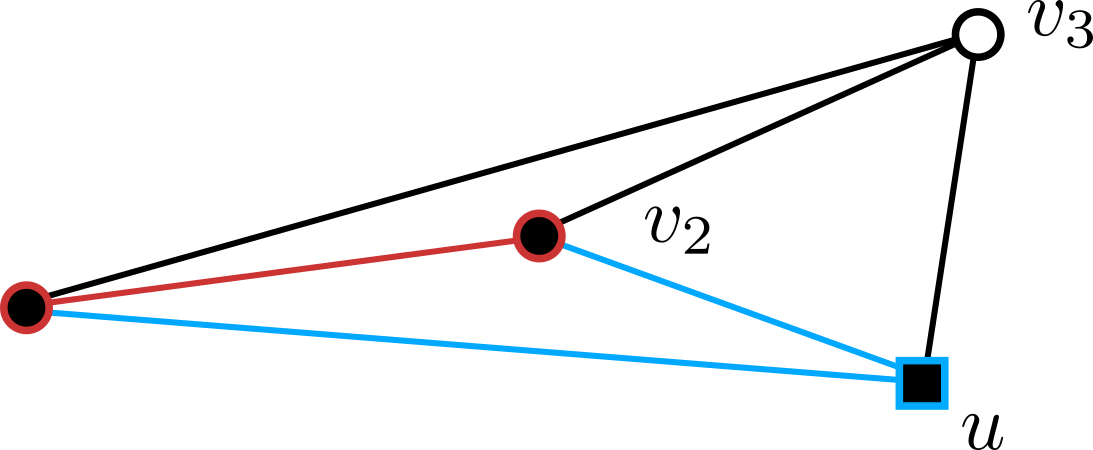}
        \includegraphics[scale=0.9]{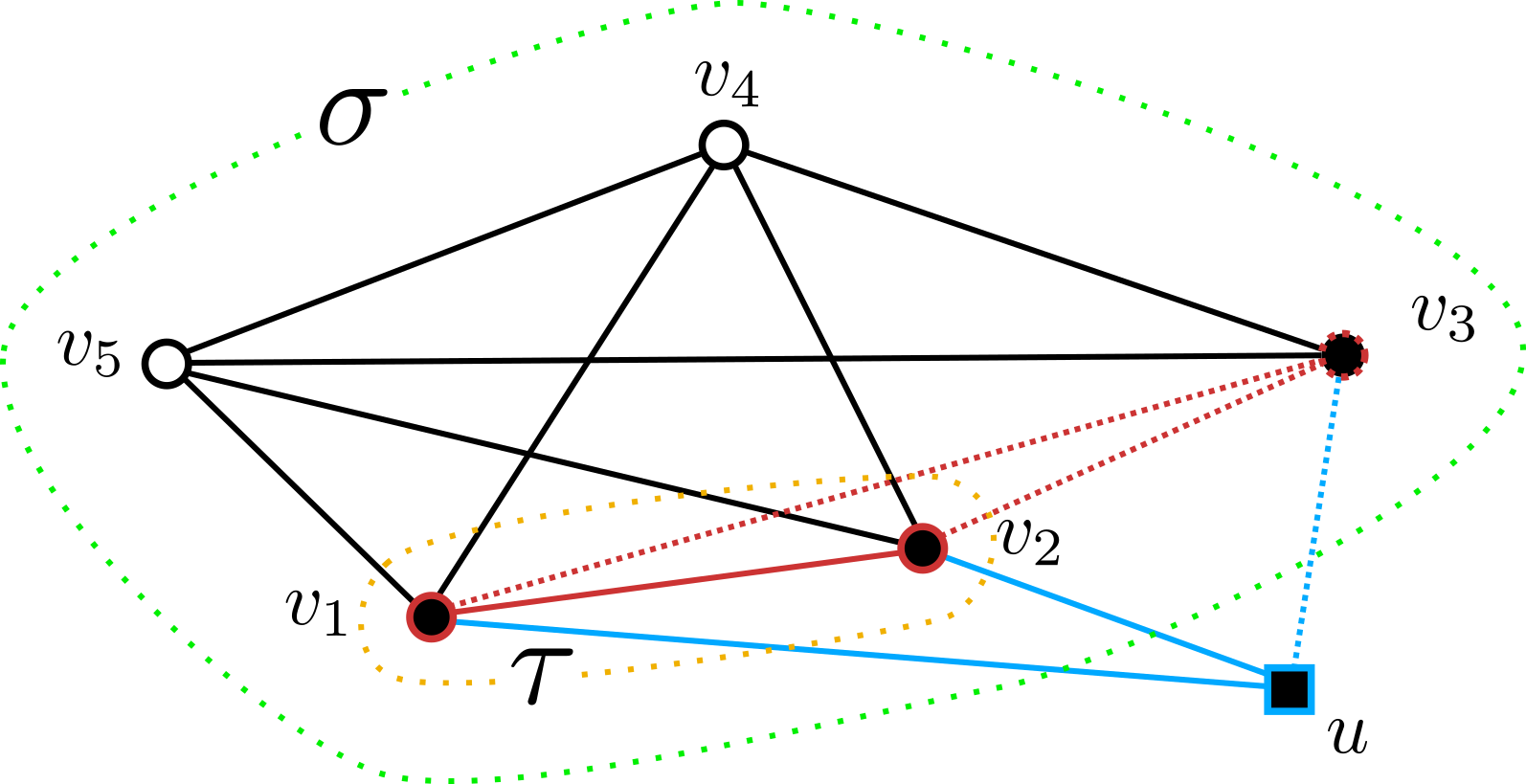}
        \caption{The number of ways to choose $v_{k+1}$ to obtain the figure on the left is $\ell_{k+1}$. Choosing $v_{k+2},\dotsc,v_m \notin \Lk(\{u\})$ such that $(v_1,\dotsc,v_m)$ forms an ordered clique gives the figure on the right, and there are $N_{m,k+1}((v_1,\dotsc,v_{k+1}),u)$ ways to choose these vertices.}
        \label{fig:countingcliques4and5}
    \end{figure}
\end{proof}

Therefore, by Theorem \ref{proposition: supercounting}, we see that our system in (\ref{eq: precountingsystem}) becomes
\[
    \mathcal{G}_m(z) = \ell_0 \dotsb \ell_{m-1} + z\left( \sum_{k = 0}^m \binom{m}{k} N_{m,k}\sum_{\tilde{\tau} \in \widetilde{\mathcal{C}}_{k+1}} \Delta_{\tilde{\tau}}(z)\right)
\]
for $1 \leq m \leq d$. Because
\[
    \sum_{\tilde{\tau} \in \widetilde{\mathcal{C}}_{k+1}} \Delta_{\tilde{\tau}}(z) = \sum_{\tilde{\tau} \in \widetilde{\mathcal{C}}_{k+1}} \Delta_{\tau}(z) = \mathcal{G}_{k+1}(z),
\]
we are left with the system of equations:
\begin{equation}
\label{eq: geodesic system}
    \begin{dcases}
        \mathcal{G}_0(z) \!\!\!\!     & = 1 + z\mathcal{G}_1(z)                                                                                                               \\
        \mathcal{G}_{m}(z) \!\!\!\!   & = \ell_0 \dotsb \ell_{m-1} + z\left( \sum_{k = 0}^m \binom{m}{k} N_{m,k}\mathcal{G}_{k+1}(z)\right) \quad \text{for } 1 \leq m \leq d \\
        \mathcal{G}_{d+1}(z) \!\!\!\! & = 0.
    \end{dcases}
\end{equation}
The clear next step is to bring the recurrence $N_{m,k}$ into a closed form.
\begin{lem}
    \label{lem: closed form lemma}
    For $1 \leq m \leq d$ and $0 \leq k \leq m$, the recurrence $N_{m,k}$ in Prop. \ref{proposition: supercounting}
    has the closed form solution
    \[
        N_{m,k} =
        \begin{dcases}
            \left(\prod_{k < j < m} \ell_j\right)\sum_{j = k}^m \binom{m - k}{j - k} (-1)^{j - k} \ell_j
                        & \text{if $k \neq m$} \\
            \ell_{m - 1} - \ell_m - 1 & \text{if $k = m - 1$} \\
            1 & \text{if $k = m$}.
        \end{dcases}
    \]
\end{lem}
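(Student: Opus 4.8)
The plan is to show that the displayed closed form satisfies the three defining data of $N_{m,k}$ from Proposition \ref{proposition: supercounting} --- namely $N_{m,m} = 1$, $N_{m,m-1} = \ell_{m-1} - \ell_m - 1$, and $N_{m,k} = \ell_{m-1}N_{m-1,k} - \ell_{k+1}N_{m,k+1}$ for $k < m-1$ --- since these determine the array $N_{m,k}$ uniquely. Equivalently, this is an induction on the gap $g = m - k$: the two inputs $N_{m-1,k}$ and $N_{m,k+1}$ on the right of the recurrence both have gap $g - 1$, so the induction closes on $g$ alone. The cases $g = 0$ and $g = 1$ are literally the stated base values, so nothing is required there.

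For the step ($g \ge 2$, so $k < m-1$) write $S_{m,k} = \sum_{j=k}^{m}\binom{m-k}{j-k}(-1)^{j-k}\ell_j$, so that the asserted formula reads $N_{m,k} = \bigl(\prod_{k<j<m}\ell_j\bigr)S_{m,k}$. The combinatorial core is the Pascal-type identity
\[
    S_{m,k} = S_{m-1,k} - S_{m,k+1},
\]
obtained by matching the coefficient of each $\ell_{k+i}$ on the two sides using $\binom{m-k}{i} = \binom{m-1-k}{i} + \binom{m-1-k}{i-1}$ (the endpoints $i = 0$ and $i = m-k$ checked separately). Together with the elementary factorizations $\prod_{k<j<m}\ell_j = \ell_{m-1}\prod_{k<j<m-1}\ell_j = \ell_{k+1}\prod_{k+1<j<m}\ell_j$, this yields --- as a polynomial identity in the $\ell_j$, so no division by a possibly vanishing $\ell_j$ is needed ---
\[
    \Bigl(\prod_{k<j<m}\ell_j\Bigr)S_{m,k} = \ell_{m-1}\Bigl(\prod_{k<j<m-1}\ell_j\Bigr)S_{m-1,k} - \ell_{k+1}\Bigl(\prod_{k+1<j<m}\ell_j\Bigr)S_{m,k+1},
\]
which is exactly the recurrence evaluated on the closed forms, \emph{provided} both $N_{m-1,k}$ and $N_{m,k+1}$ are already given by the general $\prod\cdot S$ expression. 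By the inductive hypothesis this holds precisely when $g \ge 3$.

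The lone wrinkle is $g = 2$, i.e.\ $k = m-2$, where $N_{m-1,k} = N_{m-1,m-2}$ and $N_{m,k+1} = N_{m,m-1}$ both fall into the $k = m-1$ regime, whose values $\ell_{m-2}-\ell_{m-1}-1$ and $\ell_{m-1}-\ell_m-1$ each differ from the would-be $\prod\cdot S$ value by $1$. Here I would just compute directly:
\[
    \ell_{m-1}(\ell_{m-2}-\ell_{m-1}-1) - \ell_{m-1}(\ell_{m-1}-\ell_m-1) = \ell_{m-1}(\ell_{m-2}-2\ell_{m-1}+\ell_m),
\]
the two stray $-1$'s cancelling because both terms carry the same coefficient $\ell_{m-1} = \ell_{k+1}$; and this matches $\bigl(\prod_{m-2<j<m}\ell_j\bigr)S_{m,m-2} = \ell_{m-1}(\ell_{m-2}-2\ell_{m-1}+\ell_m)$. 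With $g = 2$ handled, the induction runs to completion. I expect the only genuine bookkeeping hazard to be keeping the three regimes ($k=m$, $k=m-1$, $k\le m-2$) straight when substituting the inductive hypothesis into the recurrence, and in particular noticing that the off-by-one in the $k=m-1$ formula is harmless.
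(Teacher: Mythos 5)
Your proof is correct and is essentially the paper's argument: both verify that the closed form satisfies the defining base values and recurrence, with Pascal's rule as the key identity for combining the two binomial sums, and both must treat the gap-$2$ case $k=m-2$ by direct computation because the $k=m-1$ value $\ell_{m-1}-\ell_m-1$ is off by $1$ from the general $\prod\cdot S$ expression. The only difference is organizational --- you run a single induction on the gap $m-k$, whereas the paper nests an induction on $m$ with a backward induction on $k$ --- and this changes nothing of substance.
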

\begin{proof}
    First, we establish the base cases. For any $m \geq 1$, the closed form for $N_{m, m}$ and $N_{m,m-1}$ holds by assumption. Moreover, the recurrence for $N_{m+1, m-1}$ shows that
    \begin{align*}
        N_{m+1,m-1} &= \ell_m N_{m, m-1} - \ell_{m}N_{m+1, m}\\
        &= \ell_m(\ell_{m-1} - \ell_m - 1) - \ell_m(\ell_m - \ell_{m+1} - 1)\\
        &= \ell_m(\ell_{m-1} - 2\ell_m + \ell_{m+1})\\
        &= \left(\prod_{m-1 < j < m+1} \ell_j\right)\sum_{j = m-1}^{m+1} \binom{2}{j - m - 1}(-1)^{j - m - 1}\ell_j.
    \end{align*}
    In particular, we see that the closed form holds for $N_{1,0}$, $N_{1,1}$, $N_{2,0}$, $N_{2,1}$, and $N_{2,2}$.
    
    Now, we proceed by induction on $m$. Fix $m \geq 3$ and suppose that the closed form holds for $N_{m,k}$ for all $0 \leq k \leq m$. We induct backward on $k$ to establish the statement for $N_{m+1,k}$ for all $0 \leq k \leq m + 1$. By the preceding paragraph, we know that for $k \geq m - 1$, the closed form holds. Thus, we fix $k < m - 1$, and suppose that the closed form holds for $N_{m+1,k+1}$. We will show the closed form is then correct for $N_{m+1,k}$. We have
    \begin{align}
        N_{m+1,k} & = \ell_{m} N_{m,k} - \ell_{k+1}N_{m+1,k+1} \notag\\
                  & = \ell_m\left[ \left(\prod_{k < j < m} \ell_j\right)\sum_{j = k}^m \binom{m - k}{j - k} (-1)^{j - k} \ell_j\right] \notag\\
                  & \ \ \ - \ell_{k+1}\left[ \left(\prod_{k+1 < j < m+1} \ell_j\right)\sum_{j = k+1}^{m+1} \binom{m - k}{j - k - 1} (-1)^{j - k - 1} \ell_j \right] \notag\\
                  & = \left(\prod_{k < j < m+1} \ell_j\right)\left[\underbrace{\sum_{j = k}^m \binom{m - k}{j - k} (-1)^{j - k} \ell_j}_{(\ast)} - \underbrace{\sum_{j = k+1}^{m+1} \binom{m - k}{j - k - 1} (-1)^{j - k - 1} \ell_j}_{(\ast\ast)}\right] \label{eq: sum diff induction}
    \end{align}
    The product in (\ref{eq: sum diff induction}) agrees with the closed form solution, so we need only argue that the difference between $(\ast)$ and $(\ast\ast)$ is equal to the sum in the closed form. We rewrite $(\ast)$ as
    \begin{equation}
        \binom{m-k}{0}(-1)^{0}\ell_{k} + \sum_{j = k+1}^{m} \binom{m - k}{j - k} (-1)^{j - k} \ell_j \label{eq: sum ast}
    \end{equation}
    and $(\ast\ast)$ as
    \begin{equation}
        \sum_{j = k+1}^{m} \binom{m - k}{j - k - 1} (-1)^{j - k - 1} \ell_j + \binom{m- k}{m-k}(-1)^{m-k}\ell_{m+1} .\label{eq: sum ast ast}
    \end{equation}
    Taking the difference of (\ref{eq: sum ast}) and (\ref{eq: sum ast ast}) and applying Pascal's rule for binomial coefficients yields
    \[
        \binom{m-k}{0}(-1)^{0}\ell_{k} + \sum_{j = k+ 1}^m \binom{m - k + 1}{j - k}(-1)^{j-k}\ell_j - \binom{m- k}{m-k}(-1)^{m-k}\ell_{m+1}
    \]
    As $\binom{m-k}{0} = \binom{m-k+1}{0}$, $\binom{m-k}{m-k} = \binom{m-k+1}{m-k+1}$, and $-(-1)^{m-k} = (-1)^{m-k+1}$, this is indeed equal to
    \[
        \sum_{j = k}^{m+1} \binom{m - k + 1}{j - k}(-1)^{j-k}\ell_j
    \]
    as desired.

    Thus, the closed form holds for $N_{m+1,k}$ for all $0 \leq k \leq m + 1$. By induction on $m$, the closed form holds for all $N_{m,k}$.
\end{proof}

We will now obtain a solution to the system in (\ref{eq: geodesic system}) in terms of a polynomial recurrence. First, set $a_0 = 1$ and $a_m = \ell_0 \dotsb \ell_{m-1}$ for all $1 \leq m \leq d$. Then, set $b_{0,0} = 1$ and
\[
    b_{m,k} = \binom{m}{k} N_{m,k},
\]
for $1 \leq m \leq d$ and $0 \leq k \leq m$. Observe that this implies $b_{m,m} = 1$. Thus, for $1 \leq m \leq d$, we can rewrite (\ref{eq: geodesic system}) as
\begin{align*}
    \mathcal{G}_m(z) & = a_m + z\sum_{k = 0}^m b_{m,k} \mathcal{G}_{k+1}(z) \notag                                                   \\
                     & = a_m + z\sum_{k=0}^{m-2} b_{m,k} \mathcal{G}_{k+1}(z) + b_{m,m-1}z\mathcal{G}_{m}(z) + z\mathcal{G}_{m+1}(z).
\end{align*}
Hence, by rearranging we get
\begin{equation}
    \label{eq:simplifiedfinalexpression}
    \mathcal{G}_{m+1}(z) = \frac{1}{z}((1-b_{m,m-1}z)\mathcal{G}_m(z) - a_m) -\sum_{k = 0}^{m-2} b_{m,k}\mathcal{G}_{k+1}(z)
\end{equation}
for all $1 \leq m \leq d$. Note that $\mathcal{G}_{d+1}(z) = 0$, but the derivation for (\ref{eq:simplifiedfinalexpression}) still works.

\begin{lem}
    \label{lemma:recurrencepolynomials}
    If $1 \leq m \leq d+1$, then
    \[
        \mathcal{G}_{m}(z) = \frac{1}{z^{m}}(p_{m}(z)\mathcal{G}_0(z) - q_{m}(z)),
    \]
    where the polynomials $p_m(z)$ and $q_m(z)$ are recursively defined by
    \begin{align*}
        p_{m+1}(z) & = p_m(z) - \left(\sum_{k=0}^{m-1} b_{m,k} z^{m-k}p_{k+1}(z)\right)           \\
        q_{m+1}(z) & = q_{m}(z) + z^ma_m - \left(\sum_{k=0}^{m-1} b_{m,k}z^{m-k}q_{k+1}(z)\right)
    \end{align*}
    with $p_1(z) = q_1(z) = 1$.
\end{lem}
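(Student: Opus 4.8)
The plan is to prove the identity by strong induction on $m$, using the rearranged recursion (\ref{eq:simplifiedfinalexpression}) as the engine and extracting the recurrences for $p_m(z)$ and $q_m(z)$ by separating out the coefficient of $\mathcal{G}_0(z)$. For the base case $m=1$, the first line of the system (\ref{eq: geodesic system}) reads $\mathcal{G}_0(z) = 1 + z\mathcal{G}_1(z)$, so $\mathcal{G}_1(z) = \tfrac{1}{z}(\mathcal{G}_0(z) - 1) = \tfrac{1}{z}(p_1(z)\mathcal{G}_0(z) - q_1(z))$ with $p_1(z) = q_1(z) = 1$, exactly as claimed.

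For the inductive step, fix $1 \le m \le d$ and assume $\mathcal{G}_j(z) = z^{-j}(p_j(z)\mathcal{G}_0(z) - q_j(z))$ for every $1 \le j \le m$. I would substitute this expression for $\mathcal{G}_m(z)$, and for each $\mathcal{G}_{k+1}(z)$ with $0 \le k \le m-2$ (all of which have index at most $m$, hence are covered by the hypothesis), into the right-hand side of (\ref{eq:simplifiedfinalexpression}), and then clear denominators by multiplying through by $z^{m+1}$. Using $z^m\mathcal{G}_m(z) = p_m(z)\mathcal{G}_0(z) - q_m(z)$ and $z^{m+1}\mathcal{G}_{k+1}(z) = z^{m-k}(p_{k+1}(z)\mathcal{G}_0(z) - q_{k+1}(z))$, this writes $z^{m+1}\mathcal{G}_{m+1}(z)$ as a polynomial in $z$ times $\mathcal{G}_0(z)$, minus another polynomial in $z$. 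Collecting the terms carrying $\mathcal{G}_0(z)$, the coefficient is $(1 - b_{m,m-1}z)p_m(z) - \sum_{k=0}^{m-2} b_{m,k}z^{m-k}p_{k+1}(z)$; the key observation is that $b_{m,m-1}z\,p_m(z)$ is precisely the $k = m-1$ summand missing from the truncated sum, so this coefficient equals $p_m(z) - \sum_{k=0}^{m-1} b_{m,k}z^{m-k}p_{k+1}(z) = p_{m+1}(z)$. The same manipulation applied to the remaining terms, together with the stray $-a_m z^m$ coming from the $-a_m$ in (\ref{eq:simplifiedfinalexpression}), identifies the constant part with $-q_{m+1}(z)$ for $q_{m+1}$ as defined. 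Dividing back by $z^{m+1}$ yields the claim for $m+1$, and induction carries us up to $m = d+1$, which is legitimate since (as remarked immediately after (\ref{eq:simplifiedfinalexpression})) that identity still holds at $m = d$.

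I do not expect a genuine obstacle here; the computation is bookkeeping. The one point requiring attention is matching indices: recognizing that the extra $b_{m,m-1}z$ terms produced by the factors $(1 - b_{m,m-1}z)$ are exactly the $k=m-1$ terms absent from the sums $\sum_{k=0}^{m-2}$, so that after recombination one recovers the full sums $\sum_{k=0}^{m-1}$ appearing in the stated recurrences. It is also prudent to check the degenerate instance $m = 1$, where $\sum_{k=0}^{m-2}$ is empty and the recursions collapse to $p_2(z) = p_1(z) - b_{1,0}z\,p_1(z)$ and $q_2(z) = q_1(z) + a_1 z - b_{1,0}z\,q_1(z)$, to rule out an off-by-one error.
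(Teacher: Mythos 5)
Your proposal is correct and follows essentially the same route as the paper: induction on $m$ starting from $\mathcal{G}_1(z) = \tfrac{1}{z}(\mathcal{G}_0(z)-1)$, substitution of the inductive hypothesis into (\ref{eq:simplifiedfinalexpression}), clearing the denominator $z^{m+1}$, and recognizing that the $b_{m,m-1}z$ terms from the factor $(1-b_{m,m-1}z)$ supply exactly the missing $k=m-1$ summands. No gaps; the index-matching point you flag is precisely the only step needing care.
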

\begin{proof}
    For $m = 1$, we have
    \[
        \mathcal{G}_1(z) = \frac{1}{z}(\mathcal{G}_0(z) - 1) = \frac{1}{z}(p_1(z)\mathcal{G}_0(z) - q_1(z))
    \]
    by (\ref{eq: geodesic system}). Now, fix $1 \leq m \leq d$ and suppose that the above formulas for $\mathcal{G}_\ell(z)$, $p_\ell(z)$, and $q_\ell(z)$ hold for all $1 \leq \ell \leq m$. By (\ref{eq:simplifiedfinalexpression}),
    \begin{align*}
        \mathcal{G}_{m+1}(z) & = \frac{1}{z}\left((1-b_{m,m-1}z)\mathcal{G}_m(z) - a_m \right) - \sum_{k = 0}^{m-2} b_{m,k}\mathcal{G}_{k+1}(z)                  \\
                             & = \frac{1}{z}\left((1-b_{m,m-1}z)\frac{1}{z^m}(p_m(z)\mathcal{G}_0(z) - q_m(z)) - a_m\right)                                      \\
                             & \quad - \sum_{k = 0}^{m-2} \frac{b_{m,k}}{z^{k+1}}\left(p_{k+1}(z)\mathcal{G}_0(z) - q_{k+1}(z)\right)                            \\
                             & = \frac{1}{z^{m+1}}\left((1-b_{m,m-1}z)p_m(z)\mathcal{G}_0(z) - (1-b_{m,m-1}z)q_m(z)\right)                                       \\
                             & \ \ \ \ - \frac{1}{z^{m+1}}\sum_{k=0}^{m-2} b_{m,k} z^{m-k}(p_{k+1}(z)\mathcal{G}_0(z) - q_{k+1}(z)) - \frac{1}{z^{m+1}}(z^m a_m)
    \end{align*}
    We combine the fractions above to get a single fraction with denominator $z^{m+1}$. The numerator is given by
    \begin{align*}
         & \left[(1-b_{m,m-1}z)p_m(z) - \left(\sum_{k = 0}^{m-2} b_{m,k}z^{m-k} p_{k+1}(z)\right)\right]\mathcal{G}_0(z)  \\
         & \ \ \ - \left[(1 - b_{m,m-1}z)q_m(z) - \left(\sum_{k=0}^{m-2}b_{m,k} z^{m-k}q_{k+1}(z)\right) + z^m a_m\right] \\
         & = p_{m+1}(z)\mathcal{G}_0(z) - q_{m+1}(z),
    \end{align*}
    as desired.
\end{proof}

From Lemma \ref{lemma:recurrencepolynomials} and the fact that $\mathcal{G}_{d + 1}(z) = 0$, we see that the geodesic growth series of $\RACG(\Gamma)$ is given by
\[
    \mathcal{G}_0(z) = \frac{q_{d+1}(z)}{p_{d+1}(z)}
\]
Finally, we solve the recurrence relations defining the polynomials $p_m(z)$ and $q_m(z)$. The following lemma generalizes the solution for both polynomials.

\begin{lem}
\label{lemma: solving polynomial recurrence}
    Let $x_1, \dots, x_d$ be fixed indeterminates. For $1 \leq m \leq d$, recursively define the polynomials
    \begin{equation}
        \label{eq: recurrence}
        r_{m + 1}(x_1, \dots, x_d, z) = r_m(x_1, \dots, x_d, z) + x_mz^m - \sum_{k = 0}^m b_{m, k} z^{m - k} r_{k + 1}(x_1, \dots, x_d, z)
    \end{equation}
    with the base case $r_1(x_1, \dots, x_d, z) = 1$. To make the following formula simpler, we define $x_0 = 1$. Then $r_m(x_1, \dots, x_d, z)$ is given by
    \[
        r_m(x_1, \dots, x_d, z) = \sum_{i = 0}^{m - 1}
        \sum_{j = 0}^i
        \left(\sum_{\substack{j \leq s_1 < t_1 \leq \dots \leq s_n < t_n \leq m - 1 \\ (t_1 - s_1) + \dots + (t_n - s_n) = i - j}}
        (-1)^n b_{t_1, s_1} \cdots b_{t_n, s_n}\right)
        x_j z^i.
    \]
    (Note that when choosing the numbers $j \leq s_1 < t_1 \leq \dots \leq s_n < t_n \leq m - 1$, we allow for $n = 0$, in which case the corresponding summand $(-1)^n b_{t_1, s_1} \cdots b_{t_n, s_n}$ evaluates to $1$.)
\end{lem}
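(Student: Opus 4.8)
The plan is to prove the formula by induction on $m$, extracting the coefficient of each monomial $x_j z^i$ from both sides of the recurrence (\ref{eq: recurrence}). For brevity, write
\[
    c_{\ell, i, j} = \sum_{\substack{j \leq s_1 < t_1 \leq \dots \leq s_n < t_n \leq \ell - 1 \\ (t_1 - s_1) + \dots + (t_n - s_n) = i - j}} (-1)^n b_{t_1, s_1} \cdots b_{t_n, s_n}
\]
for the bracketed coefficient appearing in the statement, so that the claim becomes $r_\ell = \sum_{i = 0}^{\ell - 1} \sum_{j = 0}^i c_{\ell, i, j}\, x_j z^i$ (recalling $x_0 = 1$). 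The base case $m = 1$ is immediate: with $i = j = 0$ forced, the only admissible chain is the empty one ($n = 0$), of weight $1$, so the formula returns $r_1 = x_0 z^0 = 1$, matching $r_1(x_1, \dots, x_d, z) = 1$.

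For the inductive step, assume the formula holds for $r_1, \dots, r_m$. Substituting these expressions into the recurrence (\ref{eq: recurrence}) and comparing the coefficient of $x_j z^i$ on both sides (for $0 \le j \le i \le m$), the step reduces to the single identity
\[
    c_{m + 1, i, j} = c_{m, i, j} - \sum_{k = 0}^{m - 1} b_{m, k}\, c_{k + 1,\; i - (m - k),\; j};
\]
here the standalone term $x_m z^m$ of the recurrence is exactly what accounts for the lone index at which the combinatorial quantity $c_{m, m, m}$ (which equals $1$, via the empty chain) disagrees with the actual coefficient of $x_m z^m$ in the polynomial $r_m$ (namely zero). We would then prove this identity combinatorially. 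The chains counted by $c_{m + 1, i, j}$ are exactly those whose largest index $t_n$ satisfies $t_n \le m$, so we partition them according to whether $t_n \le m - 1$ or $t_n = m$. The first class contributes precisely $c_{m, i, j}$. A chain in the second class decomposes uniquely as a chain whose largest index is at most $s_n$ — which therefore has total length $(i - j) - (m - s_n)$ and is counted by $c_{s_n + 1,\; i - (m - s_n),\; j}$ — followed by the terminal interval $[s_n, m]$, whose presence multiplies the weight by $-b_{m, s_n}$, the sign coming from the extra factor in $(-1)^n$. Summing over $s_n = k$ from $j$ up to $m - 1$ — equivalently from $0$ to $m - 1$, since the extra terms with $k < j$ contribute nothing — produces the right-hand side, which completes the induction.

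I expect the main obstacle to be the careful bookkeeping of the degenerate and boundary cases: the empty chain, the lone $x_m z^m$ term, and the precise ranges of the indices $i$, $j$, $k$, all of which must be reconciled so that the combinatorial identity for the $c_{\ell, i, j}$ translates faithfully into the polynomial recurrence (in particular, one must keep track of where the combinatorial count and the genuine polynomial coefficient differ). Once the decomposition of a chain according to the location of its top index is set up cleanly, verifying the identity and carrying out the substitution of the inductive hypotheses are both routine.
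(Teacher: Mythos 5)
Your proposal is correct and follows essentially the same route as the paper: induction on $m$, extraction of the coefficient of $x_j z^i$, and a partition of the chains $j \leq s_1 < t_1 \leq \dots \leq s_n < t_n \leq m$ according to whether $t_n \leq m-1$ or $t_n = m$ (the paper's $S_1$ and $S_{2,k}$), with the standalone $x_m z^m$ term and the vanishing of out-of-range coefficients handled exactly as you describe. Note that you have implicitly read the upper limit of the sum in the recurrence as $m-1$ rather than $m$ (otherwise the $k = m$ term would be $r_{m+1}$ itself); this matches the recurrences for $p_m$ and $q_m$ and is what the paper's own proof uses, so the statement's upper limit of $m$ appears to be a typo that you correctly worked around.
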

\begin{proof}
    The above formula gives $r_1(x_1, \dots, x_d, z) = 1$, as required. Inductively, assume for some $1 < m \leq d$ that $r_k(x_1, \dots, x_d, z)$ agrees with the above formula for all $1 \leq k \leq m$. We want to show that $r_{m + 1}(x_1, \dots, x_d, z)$ also agrees with the above formula. From the recurrence formula (\ref{eq: recurrence}) and the inductive hypothesis, we see that the only terms of $r_{m + 1}(x_1, \dots, x_d, z)$ are all scalar multiples of $x_jz^i$ for $j \leq i \leq m$. Hence, it suffices to show that the coefficient of $x_jz^i$ in $r_{m + 1}(x_1, \dots, x_d, z)$ is
    \begin{equation}
        \label{eq: coefficient}
        \sum_{\substack{j \leq s_1 < t_1 \leq \dots \leq s_n < t_n \leq m \\ (t_1 - s_1) + \dots + (t_n - s_n) = i - j}}
        (-1)^n b_{t_1, s_1} \cdots b_{t_n, s_n}
    \end{equation}
    for $j \leq i \leq m$.

    First, consider the case when $j = i = m$. By the inductive hypothesis, the only term in the right-hand side of (\ref{eq: recurrence}) containing $x_m$ is the term $x_mz^m$. Hence, the coefficient of $x_mz^m$ is $1$ in agreement with (\ref{eq: coefficient}).

    Next, consider the case when $j < m$. Observe that a choice of numbers $j \leq s_1 < t_1 \leq \dots \leq s_n < t_n \leq m$ such that $\sum_{\ell = 1}^n (t_\ell - s_\ell) = i - j$ falls into one of two disjoint cases:
    \begin{itemize}
        \item $t_n < m$, in which case we have chosen $j \leq s_1 < t_1 \leq \dots \leq s_n < t_n \leq m - 1$ such that $\sum_{\ell = 1}^n (t_\ell - s_\ell) = i - j$.
        \item $t_n = m$, in which case we have chosen $s_n = k$ for $0 \leq k \leq m - 1$ and $j \leq s_1 < t_1 \leq \dots \leq s_{n - 1} < t_{n - 1} \leq k$ such that $\sum_{\ell = 1}^{n - 1} (t_\ell - s_\ell) = (i - j) - (m - k) = i - m + k - j$.
    \end{itemize}
    Hence, we may break up the sum in (\ref{eq: coefficient}) into the sums
    \[
        \sum_{\substack{j \leq s_1 < t_1 \leq \dots \leq s_n < t_n \leq m \\ (t_1 - s_1) + \dots + (t_n - s_n) = i - j}}
        (-1)^n b_{t_1, s_1} \cdots b_{t_n, s_n}
        = S_1 + \sum_{k = 0}^{m - 1} S_{2, k},
    \]
    where
    \[
        S_1 = \sum_{\substack{j \leq s_1 < t_1 \leq \dots \leq s_n < t_n \leq m - 1 \\ (t_1 - s_1) + \dots + (t_n - s_n) = i - j}}
        (-1)^n b_{t_1, s_1} \cdots b_{t_n, s_n}
    \]
    and
    \[
        S_{2, k} = \sum_{\substack{j \leq s_1 < t_1 \leq \dots \leq s_{n - 1} < t_{n - 1} \leq k \\ (t_1 - s_1) + \dots + (t_{n - 1} - s_{n - 1}) = i - m + k - j}}
        (-1)^n b_{t_1, s_1} \cdots b_{t_{n - 1}, s_{n - 1}}b_{m, k}.
    \]
    Now, by the inductive hypothesis, we have the following facts:
    \begin{itemize}
        \item $S_1$ is the coefficient of $x_jz^i$ in $r_m(x_1, \dots, x_d, z)$.
        \item If $j \leq i - m + k$, $-S_{2, k} / b_{m, k}$ is the coefficient of $x_jz^{i - m + k}$ in $r_{k + 1}(x_1, \dots, x_d, z)$. This implies $S_{2, k}$ is the coefficient of $x_jz^i$ in $-b_{m, k} z^{m - k}r_{k + 1}(x_1, \dots, x_d, z)$.
        \item If $j > i - m + k$, then $r_{k + 1}(x_1, \dots, x_d, z)$ has no $x_jz^{i - m + k}$ term. In this case, $S_{2, k} = 0$, so $S_{2, k}$ is again the coefficient of $x_jz^i$ in $-b_{m, k} z^{m - k}r_{k + 1}(x_1, \dots, x_d, z)$.
    \end{itemize}
    Therefore, the right-hand side of (\ref{eq: recurrence}) agrees with (\ref{eq: coefficient}) (recall that $j < m$, so the $a_mz^m$ term in (\ref{eq: recurrence}) can be ignored). This proves the claim, and so we are done.
\end{proof}

By evaluating the polynomials $r_m(x_1, \dots, x_d, z)$ at $x_1 = \dots = x_d = 0$, we obtain the polynomials $p_m(z)$, and by evaluating at $x_i = a_i$ (recall that $a_i$ are the numbers defined just after the proof of Lemma \ref{lem: closed form lemma}), we obtain the polynomials $q_m(z)$. Therefore, this lemma proves that
\[
    p_m(z) = \sum_{i = 0}^{m - 1} \left(\sum_{\substack{j \leq s_1 < t_1 \leq \dots \leq s_n < t_n \leq m - 1 \\ (t_1 - s_1) + \dots + (t_n - s_n) = i}}
    (-1)^n b_{t_1, s_1} \cdots b_{t_n, s_n}\right)
    z^i
\]
and
\[
    q_m(z) =
    \sum_{i = 0}^{m - 1} \sum_{j = 0}^i
    \left(\sum_{\substack{j \leq s_1 < t_1 \leq \dots \leq s_n < t_n \leq m - 1 \\ (t_1 - s_1) + \dots + (t_n - s_n) = i - j}}
    (-1)^n b_{t_1, s_1} \cdots b_{t_n, s_n}\right)
    a_j z^i.
\]
By substituting the values of $a_m$ and $b_{m, k}$, we obtain the formula given in Theorem \ref{thm: main RACG theorem}. Note that the parenthetical remark at the end of Lemma \ref{lemma: solving polynomial recurrence} justifies the definition $M_{i, j} = 1$ when $i = j$. This completes the proof of Theorem \ref{thm: main RACG theorem}.

\section{Triangle-Free NGPs with constant vertex number}
\label{sec: theorem 3}
Now, we apply the techniques in Section \ref{sec: theorem 2} to compute the geodesic growth series of NGPs. Due to the increased complexity, we restrict ourselves to link-regular numbered graphs $(\Gamma, N)$ such that $\Gamma$ is \textit{triangle-free} (meaning it contains no $3$-cliques), and where $N$ assigns a constant number to every vertex. Since we have already covered the case when $N \equiv 2$, we will further assume $N \geq 3$. Because $N$ is constant, the link-regularity condition on $(\Gamma, N)$ is the equivalent to saying that the simplicial graph $\Gamma$ itself is link-regular (see the argument above Definition \ref{def: link regularity of simplicial graphs}). Moreover, because $\Gamma$ contains no $3$-cliques (and hence no $m$-cliques for all $m \geq 3$), the link-regularity of $\Gamma$ simply states that the \textit{valence} (number of neighbors) of each vertex is constant, i.e., $\Gamma$ is a \textit{regular} graph. We say a regular graph is \textit{$L$-regular} if each vertex has $L$ neighbors.

\begin{thm}
\label{prop: last prop}
Let $\Gamma$ be an $L$-regular and triangle-free simplicial graph with $n$ vertices. Given an integer $N \geq 2$, let $(\Gamma, N)$ be the numbered graph where all vertex numbers are $N$. The geodesic growth series $\mathcal{G}(z)$ of $\NGP(\Gamma)$ can be determined by solving the following system of equations for $1 \leq k, \ell \leq \lfloor N / 2 \rfloor$.
\begin{align*}
    \mathcal{G}(z) &= 2z\mathcal{G}_1(z) + 1 \\
    \mathcal{G}_k(z) &= z(\mathcal{G}_{k + 1}(z)
    + 2\mathcal{G}_{\{1, k\}}(z) + 2(n - L - 1)\mathcal{G}_1(z)) + n \\
    \mathcal{G}_{\{k, \ell\}}(z)
    &= z(\mathcal{G}_{\{k + 1, \ell\}}(z)
    + 2(L - 1)(\mathcal{G}_{\{1, k\}}(z)
    + \mathcal{G}_{\{1, \ell\}}(z))
    + 2 L(n - 2L)\mathcal{G}_1(z))
    + nL.
\end{align*}
Here, $\mathcal{G}_k(z)$ and $\mathcal{G}_{\{k, \ell\}}(z)$ are power series such that $\mathcal{G}_{\lfloor N / 2 \rfloor + 1}(z) = \mathcal{G}_{\{\lfloor N / 2 \rfloor + 1, \ell\}}(z) = 0$ for all $1 \leq \ell \leq \lfloor N / 2\rfloor$.
\end{thm}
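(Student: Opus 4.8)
The plan is to imitate the argument of Section~\ref{sec: theorem 2}, with powered cliques in place of cliques. By Theorem~\ref{theorem: fsa} the geodesic language of $\NGP(\Gamma)$ is accepted by the FSA $\mathcal{D}$ whose states (besides the fail state) are the powered cliques of $(\Gamma,N)$. Since $\Gamma$ is triangle-free, every clique has at most two vertices, so every powered clique has support of size $0$, $1$, or $2$; and since $N$ is constant, the power profile of a state is just the multiset of its nonzero powers. I would pass to the regular grammar of Proposition~\ref{proposition: grammar rules}, note that it is unambiguous and that every variable is reachable (by the same reasoning as Lemma~\ref{lemma: unambiguous}: at most one production introduces any given letter, and each state is reached by a path in $\mathcal{D}$), and apply Theorem~\ref{theorem: chomsky black magic} to obtain, for every powered clique $\Sigma$, the equation $\Delta_\Sigma(z) = 1 + z\sum_s \Delta_{\delta(\Sigma,s)}(z)$, the sum over generators $s$ with $\delta(\Sigma,s)$ not the fail state.

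The raw system has one unknown per state, so the crux is to collapse it. Two symmetries do this. First, by the machinery of Theorem~\ref{theorem: main} --- in particular Lemma~\ref{lemma: bijection between transitions} and Corollary~\ref{corollary: constant corollary}, applied with $\Gamma'=\Gamma$, which is legitimate because a link-regular numbered graph is equivalent to itself --- the series $\Delta_\Sigma(z)$ depends only on the power profile $P(\Sigma)$. Second, for each vertex $v$ the automorphism of $\NGP(\Gamma)$ inverting $v$ and fixing all other generators permutes the standard generating set, hence preserves geodesics, so $\Delta_\Sigma(z)$ is unchanged when the sign of $\Sigma$ at a vertex is flipped. Together these say $\Delta_\Sigma$ depends only on the multiset $\{\,|\Sigma(v)| : v\in\supp\Sigma\,\}$. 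I therefore set, for $1\le k,\ell\le\lfloor N/2\rfloor$, $\mathcal{G}_k(z) = \sum_v \Delta_{\Sigma_{v,k}}(z)$, where $\Sigma_{v,k}$ is the powered clique supported on $\{v\}$ with power $k$ at $v$, and $\mathcal{G}_{\{k,\ell\}}(z) = \sum_{(v,w)}\Delta_\Sigma(z)$, the sum over the $nL$ ordered edges $(v,w)$ with $\Sigma$ supported on $\{v,w\}$ and $\Sigma(v)=k$, $\Sigma(w)=\ell$; and $\mathcal{G}(z) = \Delta_\varnothing(z)$ is the geodesic growth series.

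It then remains to read off the three families of equations by listing the transitions out of each type of state, remembering that (for $N\ge 3$) each vertex supplies letters $v$ and $v^{-1}$, that a state with $\Sigma(v)>0$ has no $v^{-1}$-transition, and that a $v$-transition out of a state with $\Sigma(v)=\lfloor N/2\rfloor$ fails --- which is what the convention $\mathcal{G}_{\lfloor N/2\rfloor+1}=\mathcal{G}_{\{\lfloor N/2\rfloor+1,\ell\}}=0$ records. From the empty state: two transitions to $\Sigma_{v,\pm1}$ for each $v$, giving $\mathcal{G}=2z\mathcal{G}_1+1$. From $\Sigma_{v,k}$: one incrementing transition to $\Sigma_{v,k+1}$; for each of the $L$ neighbors $w$ of $v$, two transitions to a $\{k,1\}$-profile edge-state on $\{v,w\}$; and for each of the remaining $n-L-1$ vertices $w$, two resetting transitions to $\Sigma_{w,\pm1}$ --- summing over $v$ and using the definitions of $\mathcal{G}_k$, $\mathcal{G}_{\{1,k\}}$, $\mathcal{G}_1$ gives the $\mathcal{G}_k$-equation. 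From an edge-state on $\{v,w\}$: the (up to two) incrementing transitions; for each $u\in\Lk(v)\setminus\{w\}$, two transitions to a $\{k,1\}$-profile edge-state on $\{v,u\}$; for each $u\in\Lk(w)\setminus\{v\}$, two transitions to a $\{\ell,1\}$-profile edge-state on $\{w,u\}$; and two resetting transitions for every remaining vertex $u$. Triangle-freeness is what makes the count clean: no vertex is adjacent to both $v$ and $w$, so $\Lk(v)\cap\Lk(w)=\varnothing$; since $w\in\Lk(v)$ and $v\in\Lk(w)$, the set $\Lk(v)\cup\Lk(w)$ has exactly $2L$ elements and contains $v$ and $w$, leaving $n-2L$ vertices to reset through. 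Summing over the $nL$ ordered edges and substituting yields the $\mathcal{G}_{\{k,\ell\}}$-equation.

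The main obstacle is exactly this last transition count: partitioning $V\Gamma$ with respect to an edge $\{v,w\}$ into $\{v,w\}$, $\Lk(v)\setminus\{w\}$, $\Lk(w)\setminus\{v\}$, and the rest --- of sizes $2$, $L-1$, $L-1$, $n-2L$ by triangle-freeness --- and then keeping straight how the $nL$-fold (ordered-edge) and twofold ($u$ versus $u^{-1}$) multiplicities combine when the per-state equations are aggregated into the $\mathcal{G}_k$- and $\mathcal{G}_{\{k,\ell\}}$-variables, together with the boundary cases where an incrementing transition is absent because a power has reached $\lfloor N/2\rfloor$. (The case $N=2$ is genuinely different, since each vertex then contributes only one generator, and it is already covered by Theorem~\ref{thm: main RACG theorem}; so one may assume $N\ge 3$ throughout.)
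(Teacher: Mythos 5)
Your proposal is correct and follows essentially the same route as the paper: FSA $\to$ regular grammar $\to$ Theorem \ref{theorem: chomsky black magic}, then the sign-flip symmetry to identify $\Delta_{\{v^{-k}\}}$ with $\Delta_{\{v^k\}}$ (the paper does this by a direct letter-replacement bijection rather than your automorphism, but it is the same idea), and finally aggregation over vertices and over the $nL$ ordered edges with exactly the partition $\{v,w\}\sqcup(\Lk(v)\setminus\{w\})\sqcup(\Lk(w)\setminus\{v\})\sqcup(\text{rest})$ of sizes $2$, $L-1$, $L-1$, $n-2L$. The appeal to Lemma \ref{lemma: bijection between transitions} to conclude that each individual $\Delta_\Sigma$ depends only on its power profile is harmless but unnecessary: since you sum over all states of a given profile anyway, only the transition counts matter, which is all the paper uses. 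One substantive remark: your (correct) observation that an edge-state has \emph{two} incrementing transitions means the aggregated equation is
\[
\mathcal{G}_{\{k,\ell\}}(z)=z\bigl(\mathcal{G}_{\{k+1,\ell\}}(z)+\mathcal{G}_{\{k,\ell+1\}}(z)+2(L-1)(\mathcal{G}_{\{1,k\}}(z)+\mathcal{G}_{\{1,\ell\}}(z))+2L(n-2L)\mathcal{G}_1(z)\bigr)+nL,
\]
which matches the paper's intermediate display but \emph{not} the final equation in the theorem statement, where the $\mathcal{G}_{\{k,\ell+1\}}$ term has been dropped; a check of the $z^1$ coefficient of $\mathcal{G}_{\{1,1\}}$ (namely $nL(2n-2)$ versus $nL(2n-3)$) confirms the two-term version is the right one, so do not force your derivation to land on the printed equation.
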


\begin{proof}
 Let $\mathcal{D}$ be the FSA given by Theorem \ref{theorem: fsa} that recognizes the geodesic language of $(G, S)$. Applying Proposition \ref{proposition: grammar rules}, we obtain a regular grammar $C$, where for each powered clique $\Sigma$, we have a variable $\mathbf{\Delta}_\Sigma$, and for every transition $\delta(\Sigma, v)$, we have the production rule
\[
    \mathbf{\Delta}_\Sigma \to v\mathbf{\Delta}_{\delta(\Sigma, v)}.
\]
The start variable is $\mathbf{\Delta}_\varnothing$. Arguing as in Lemma \ref{lemma: unambiguous}, one can show that this grammar is unambiguous, and that every variable is reachable.

If $\Sigma$ has support $\{v_1, \dots, v_m\}$, we will write $\mathbf{\Delta}_{\{v_1^{\Sigma(v_1)}, \dots, v_m^{\Sigma(v_m)}\}}$ for $\mathbf{\Delta}_\Sigma$. Note that because $\Gamma$ is triangle-free, the support of $\Sigma$ can contain at most $2$ elements. We may expand the production rules for the start variable as
\begin{align*}
    \mathbf{\Delta}_\varnothing &\to v\mathbf{\Delta}_v && (v \in V\Gamma) \\
    \mathbf{\Delta}_\varnothing &\to v^{-1}\mathbf{\Delta}_{v^{-1 }} && (v \in V\Gamma) \\
    \mathbf{\Delta}_\varnothing &\to \varepsilon.
\end{align*}
Similarly, for any $1 \leq k \leq \lfloor N / 2\rfloor$, we may expand the production rules for $\mathbf{\Delta}_{\{v^k\}}$ as
\begin{align*}
    \mathbf{\Delta}_{\{v^k\}} &\to v\mathbf{\Delta}_{\{v^{k + 1}\}} && (k + 1 \leq \lfloor N / 2\rfloor) \\
    \mathbf{\Delta}_{\{v^k\}} &\to u^{\pm1}\mathbf{\Delta}_{\{u^{\pm1}, v^k\}} && (u \in \Lk(v)) \\
    \mathbf{\Delta}_{\{v^k\}} &\to u^{\pm1}\mathbf{\Delta}_{\{u^{\pm1}\}} && (u \notin \Lk(v), u \neq v) \\
    \mathbf{\Delta}_{\{v^k\}} &\to \varepsilon.
\end{align*}
The production rules for $\mathbf{\Delta}_{\{v^{-k}\}}$ are analogous. Finally, for any $1 \leq k, \ell \leq \lfloor N / 2 \rfloor$, using the fact that $\Gamma$ is triangle-free, we may expand the production rules for $\mathbf{\Delta}_{\{u^k, v^\ell\}}$ as
\begin{align*}
    \mathbf{\Delta}_{\{u^k, v^\ell\}}
    &\to u\mathbf{\Delta}_{\{u^{k + 1}, v^\ell\}} && (k + 1 \leq \lfloor N / 2\rfloor) \\
    \mathbf{\Delta}_{\{u^k, v^\ell\}}
    &\to v\mathbf{\Delta}_{\{u^k, v^{\ell + 1}\}} && (\ell + 1 \leq \lfloor N / 2\rfloor) \\
    \mathbf{\Delta}_{\{u^k, v^\ell\}}
    &\to w^{\pm 1}\mathbf{\Delta}_{\{u^k, w^{\pm 1}\}} && (w \in \Lk(u), w \neq v) \\
    \mathbf{\Delta}_{\{u^k, v^\ell\}}
    &\to w^{\pm 1}\mathbf{\Delta}_{\{v^\ell, w^{\pm 1}\}} && (w \in \Lk(v), w \neq u) \\
    \mathbf{\Delta}_{\{u^k, v^\ell\}}
    &\to w^{\pm 1}\mathbf{\Delta}_{\{w^{\pm 1}\}} && (w \not\in \Lk(u) \cup \Lk(v)) \\
    \mathbf{\Delta}_{\{u^k, v^\ell\}} &\to \varepsilon.
\end{align*}
Finally, the production rules for $\mathbf{\Delta}_{\{u^{-k}, v^\ell\}}$ and  $\mathbf{\Delta}_{\{u^{-k}, v^{-\ell}\}}$ are analogous.

Applying Theorem \ref{theorem: chomsky black magic}, we obtain the system of equations
\begin{align*}
    \Delta_\varnothing(z)
    ={}& z\sum_{v \in V\Gamma}(\Delta_{\{v\}}(z) + \Delta_{\{v^{-1}\}}(z)) + 1 \\
    \Delta_{\{v^k\}}(z)
    ={}& z\left(\underbrace{\Delta_{\{v^{k + 1}\}}(z)}_{\text{ if $k + 1 \leq \lfloor N / 2\rfloor$}}{}
    + \sum_{u \in \Lk(v)} (\Delta_{\{u, v^k\}}(z) + \Delta_{\{u^{-1}, v^k\}}(z))\right. \\
    &\quad \ \ \left.
    + \sum_{u \not\in \Lk(v), u \neq v} (\Delta_{\{u\}}(z) + \Delta_{\{u^{-1}\}}(z))\right)
    + 1 \\
    \Delta_{\{u^k, v^\ell\}}(z)
    ={}& z\left(\underbrace{\Delta_{\{u^{k + 1}, v^\ell\}}(z)}_{\text{ if $k + 1 \leq \lfloor N / 2\rfloor$}}{}
    + \underbrace{\Delta_{\{u^k, v^{\ell + 1}\}}(z)}_{\text{ if $\ell + 1 \leq \lfloor N / 2\rfloor$}}{}\right. \\
    & \quad \ \ + \sum_{w \in \Lk(u), w \neq v} (\Delta_{\{u^k, w\}}(z)
    + \Delta_{\{u^k, w^{- 1}\}}(z)) \\
    & \quad \ \ + \sum_{w \in \Lk(v), w \neq u} (\Delta_{\{v^\ell, w\}}(z) + \Delta_{\{v^\ell, w^{-1}\}}(z)) \\
    & \quad \ \ \left.+ \sum_{w \not\in \Lk(u) \cup \Lk(v)} (\Delta_{\{w\}}(z) + \Delta_{\{w^{-1}\}}(z))
    \right) + 1.
\end{align*}
Here, $\Delta_\Sigma(z)$ denotes the growth series of the language $\mathcal{L}_\Sigma$ consisting of all words that can be derived from $\mathbf{\Delta}_\Sigma$. In particular, $\Delta_\varnothing$ is the geodesic growth series of $(G, S)$. We also have analogous equations for $\Delta_{v^{-k}}(z)$, $\Delta_{\{u^{-k}, v^\ell\}}(z)$ and  $\Delta_{\{u^{-k}, v^{-\ell}\}}(z)$.

We claim that the power series $\Delta_{\{v^k\}}(z)$ and $\Delta_{\{v^{-k}\}}(z)$ are equal. To do this, we must show that $\mathcal{L}_{\{v^k\}}$ and $\mathcal{L}_{\{v^{-k}\}}$ contain the same number of words for each fixed length $n \geq 0$. Let $x \in \mathcal{L}_{\{v^k\}} \cap S^n$. This means the word $v^kx$ is a geodesic word. Let $x'$ be the word obtained from $x$ by replacing every letter $v^{\pm 1}$ with $v^{\mp 1}$. Clearly, $v^{-k}x'$ is still a geodesic word, so $x' \in \mathcal{L}_{\{v^{-k}\}}$. The map $x \mapsto x'$ from $\mathcal{L}_{\{v^k\}} \to \mathcal{L}_{\{v^{-k}\}}$ has inverse $x \mapsto x'$, so it is a bijection, and the claim follows.

Using similar reasoning, we have $\Delta_{\{u^k, v^\ell\}}(z) = \Delta_{\{u^{-k}, v^\ell\}}(z) = \Delta_{\{u^{-k}, v^{-\ell}\}}(z)$. Applying this to the above equations, we obtain the system
\begin{align}
    \label{eq: first}
    \Delta_\varnothing(z)
    ={}& 2z\sum_{v \in V\Gamma}\Delta_{\{v\}}(z) + 1 \\
    \label{eq: second}
    \Delta_{\{v^k\}}(z)
    ={}& z\left(\underbrace{\Delta_{\{v^{k + 1}\}}(z)}_{\text{ if $k + 1 \leq \lfloor N / 2\rfloor$}}{}
    + 2\sum_{u \in \Lk(v)} \Delta_{\{u, v^k\}}(z)
    + 2\sum_{u \not\in \Lk(v), u \neq v} \Delta_{\{u\}}(z)\right)
    + 1 \\
    \label{eq: third}
    \Delta_{\{u^k, v^\ell\}}(z)
    ={}& z\left(\underbrace{\Delta_{\{u^{k + 1}, v^\ell\}}(z)}_{\text{ if $k + 1 \leq \lfloor N / 2\rfloor$}}{}
    + \underbrace{\Delta_{\{u^k, v^{\ell + 1}\}}(z)}_{\text{ if $\ell + 1 \leq \lfloor N / 2\rfloor$}}{}
    + 2\sum_{w \in \Lk(u), w \neq v} \Delta_{\{u^k, w\}}(z)\right. \\
    \notag
    & \quad \ \ + 2\left.\sum_{w \in \Lk(v), w \neq u} \Delta_{\{v^\ell, w\}}(z)
    + 2\sum_{w \not\in \Lk(u) \cup \Lk(v)} \Delta_{\{w\}}(z)
    \right) + 1.
\end{align}

Before proceeding, we introduce some notation. We let $\widetilde{E}\Gamma$ denote the set of \textit{directed edges} of $\Gamma$, i.e., pairs $(u, v) \in (V\Gamma)^2$ such that $\{u, v\} \in E\Gamma$. Let $\mathcal{G}(z)$ denote the geodesic growth series $\Delta_\varnothing(z)$, and let
\[
    \mathcal{G}_k(z) =
    \begin{cases}
    \sum_{v \in V\Gamma} \Delta_{\{v^k\}}(z) & \text{if $1 \leq k \leq \lfloor N / 2\rfloor$} \\
    0 & \text{if $k = \lfloor N / 2\rfloor + 1$,}
    \end{cases}
\]
and
\[
    \mathcal{G}_{\{k, \ell\}}(z) =
    \begin{cases}
        \sum_{(u, v) \in \widetilde{E}\Gamma} \Delta_{\{u^k, v^\ell\}}(z)
        & \text{if $1 \leq k, \ell \leq \lfloor N / 2 \rfloor$,} \\
        0
        & \text{if $k = \lfloor N / 2\rfloor + 1$ or $\ell = \lfloor N / 2\rfloor + 1$.}
    \end{cases}
\]
Finally, it will be convenient to write $\Delta_{(u^k, v^\ell)}(z)$ and $\Delta_{(v^\ell, u^k)}(z)$ for $\Delta_{\{v^k\}}(z)$. Of course, $\Delta_{(u^k, v^\ell)}(z) = \Delta_{(v^\ell, u^k)}(z)$, but this distinction will be useful in the following counting arguments.

Equation (\ref{eq: first}) can now be restated as
\begin{equation}
    \label{eq: first in G}
    \mathcal{G}(z) = 2z\mathcal{G}_1(z) + 1.
\end{equation}
If we sum equation (\ref{eq: second}) over all vertices $v \in \Gamma$, we get
\[
    \mathcal{G}_k(z) = z\left(\mathcal{G}_{k + 1}(z)
    + 2 \sum_{v \in V\Gamma} \sum_{u \in \Lk(v)} \Delta_{(u, v^k)}(z)
    + 2 \sum_{v \in V\Gamma} \sum_{u \not\in \Lk(v), u \neq v} \Delta_{\{u\}}(z)
    \right) + n
\]
for all $1 \leq k \leq \lfloor N / 2\rfloor$. (Note that there is no edge case when $k = \lfloor N / 2 \rfloor$ because $\mathcal{G}_{\lfloor N / 2 \rfloor + 1}(z) = 0$.) By using counting arguments, we can rewrite the two sums in the above equation as follows. The first sum encounters each directed edge $(u, v) \in \widetilde{E}\Gamma$ exactly once, so we may write
\[
    \sum_{v \in V\Gamma} \sum_{u \in \Lk(v)} \Delta_{(u, v^k)}(z)
    = \sum_{(u, v) \in \widetilde{E}\Gamma} \Delta_{(u, v^k)}(z)
    = \mathcal{G}_{1, k}(z).
\]
The second sum ranges over all pairs of non-adjacent, distinct vertices $u, v \in V\Gamma$. For a fixed vertex $u \in V\Gamma$, there are $|V\Gamma \setminus \{u\} \setminus \Lk(u)| = n - L - 1$ other vertices not adjacent to $u$, which means $\Delta_{\{u\}}(z)$ appears in the sum $n - L - 1$ times. Hence, we have
\[
    \sum_{v \in V\Gamma} \sum_{u \not\in \Lk(v), u \neq v} \Delta_{\{u\}}(z)
    = (n - L - 1) \sum_{v \in V\Gamma} \Delta_{\{v\}}(z)
    = (n - L - 1) \mathcal{G}_1(z).
\]
Our equation becomes
\begin{equation}
    \label{eq: second in G}
    \mathcal{G}_k(z) = z(\mathcal{G}_{k + 1}(z)
    + 2\mathcal{G}_{1, k}(z) + 2(n - L - 1)\mathcal{G}_1(z)) + n.
\end{equation}
Summing equation (\ref{eq: third}) over all directed edges, we get
\begin{align*}
    \mathcal{G}_{\{k, \ell\}}(z)
    ={}& z\left(\mathcal{G}_{\{k + 1, \ell\}}(z)
    + \mathcal{G}_{\{k, \ell + 1\}}(z)
    + 2\sum_{(u, v) \in \widetilde{E}\Gamma}\left(\sum_{w \in \Lk(u), w \neq v} \Delta_{(u^k, w)}(z)\right.\right. \\
    & \quad \ \ \left.\left.+ 2\sum_{w \in \Lk(v), w \neq u} \Delta_{(v^\ell, w)}(z)
    + 2\sum_{w \not\in \Lk(u) \cup \Lk(v)} \Delta_{\{w\}}(z)
    \right)\right) + |\widetilde{E}\Gamma|
\end{align*}
for all $1 \leq \ell, k \leq \lfloor N / 2 \rfloor$. (Again, there are no edge cases when $k = \lfloor N / 2\rfloor$ or when $\ell = \lfloor N / 2 \rfloor$ by our definition of $\mathcal{G}_{\{k, \ell\}}(z)$.) First, note that $\widetilde{E}\Gamma$ contains $nL$ elements; indeed, to choose an edge $\{u, v\} \in E\Gamma$, there are $n$ choices for $u \in V\Gamma$, and $L$ choices for $v \in \Lk(u)$. As before, we will rewrite the three double sums in the above equation. For the first sum, fix a directed edge $(u, w) \in \widetilde{E}\Gamma$. The term $\Delta_{(u^k, w)}(z)$ appears once for every directed edge $(u, v) \in \widetilde{E}\Gamma$ such that $w \neq v$. This means $\Delta_{(u^k, w)}(z)$ appears $\abs{\Lk(u) \setminus \{w\}} = L - 1$ times. Thus, we have
\[
    \sum_{(u, v) \in \widetilde{E}\Gamma}\sum_{w \in \Lk(u), w \neq v} \Delta_{(u^k, w)}(z)
    = (L - 1)\sum_{(u, w) \in \widetilde{E}\Gamma} \Delta_{(u^k, w)}(z)
    = (L - 1)\mathcal{G}_{\{1, k\}}(z).
\]
By the same reasoning, the second sum can be rewritten
\[
    \sum_{(u, v) \in \widetilde{E}\Gamma}\sum_{w \in \Lk(v), w \neq u} \Delta_{(v^\ell, w)}(z)
    = (L - 1)\mathcal{G}_{\{1, \ell\}}(z).
\]
For the third sum, fix a vertex $w \in V\Gamma$. The term $\Delta_{\{w\}}(z)$ appears once for every directed edge $(u, v) \in \widetilde{E}\Gamma$ such that $w$ is adjacent to neither $u$ nor $v$. Note that there are $L(n - L - 1)$ choices of $(u, v)$ such that $u$ is not adjacent to $w$: $n - L - 1 = |V\Gamma \setminus \{u\} \setminus \Lk(u)|$ choices for $u$, followed by $L = \abs{\Lk(u)}$ choices for $v$. Of these choices, there are $L(L - 1)$ choices of $(u, v)$ such that $v$ is adjacent to $w$: $L = \abs{\Lk(w)}$ choices for $v$, followed by $L - 1 = \abs{\Lk(v) \setminus \{w\}}$ choices for $u$. Therefore, the term $\Delta_{\{w\}}(z)$ appears $L(n - 2L) = L(n - L - 1) - L(L - 1)$ times; hence, we may write
\[
    \sum_{(u, v) \in \widetilde{E}\Gamma}\sum_{w \not\in \Lk(u) \cup \Lk(v)} \Delta_{\{w\}}(z)
    = L(n - 2L) \sum_{w \in V\Gamma} \Delta_{\{w\}}(z)
    = L(n - 2L)\mathcal{G}_1(z).
\]
Altogether, we get the equation
\begin{equation}
    \label{eq: third in G}
    \mathcal{G}_{\{k, \ell\}}(z)
    = z(\mathcal{G}_{\{k + 1, \ell\}}(z)
    + 2(L - 1)(\mathcal{G}_{\{1, k\}}(z)
    + \mathcal{G}_{\{1, \ell\}}(z))
    + 2 L(n - 2L)\mathcal{G}_1(z))
    + nL.
\end{equation}
The equations (\ref{eq: first in G}), (\ref{eq: second in G}), and (\ref{eq: third in G}) form the system described by the theorem statement, so we are done.
\end{proof}

\section{Further Questions}
\label{sec: further}
Using the techniques detailed in this paper, there is still much that may be found regarding the geodesic growth of NGPs. For instance, in Section \ref{sec: theorem 2}, we solved for the geodesic growth of RACGs, while in Section \ref{sec: theorem 3}, we solved for the geodesic growth of NGPs associated to triangle-free numbered graphs with constant numbering. Combining the techniques in these sections, one may be able to solve for the geodesic growth of NGPs associated to numbered graphs with constant numbering, that may not be triangle-free. There will be more equations to deal with than in our case, since larger clique sizes are allowed. It may furthermore be possible to lift the requirement that the vertex numbers are constant, though the resulting equations will likely be rather unwieldy.

\bibliography{references}
\bibliographystyle{plain}

\end{document}